\newcommand{\del}{\partial}
\renewcommand{\theta}{\vartheta}
\renewcommand{\phi}{\varphi}
\newcommand{\vecc}[2]{\left ( \begin{array}{c}#1\\#2\\ \end{array}\right )}
\newcommand{\veccc}[3]{\begin{pmatrix}#1\\#2\\#3 \end{pmatrix}}
\newcommand{\dd}{\mathrm{d}}
\newcommand{\ii}{\mathbb{i}}
\renewcommand{\and}{\wedge}
\renewcommand{\vec}{\mathbf}
\newcommand{\id}{\mathbb{1}}
\newcommand{\dt}{\Delta t}
\newcommand{\dx}{{\Delta x}}
\newcommand{\dy}{{\Delta y}}
\DeclareMathOperator*{\argmin}{arg\,min}
\newtheorem{lemma}{Lemma}[section]
\newtheorem{proposition}{Proposition}[section]
\newtheorem{corollary}{Corollary}[section]
\newtheorem{example}{Example}[section]
\newtheorem{definition}{Definition}[section]
\begin{document}

\title{Structure preserving nodal continuous Finite Elements   via Global Flux quadrature}

\author{Wasilij Barsukow$^1$, Mario Ricchiuto$^{2}$, Davide Torlo$^3$}

\date{{\footnotesize $^1$ Institut de Mathématiques de Bordeaux (IMB), CNRS UMR 5251, 351 Cours de la Libération, 33405 Talence, France, \url{wasilij.barsukow@math.u-bordeaux.fr}}\\
{\footnotesize $^2$ INRIA, Univ. Bordeaux, CNRS, Bordeaux INP, IMB, UMR 5251, 200 Avenue de la Vieille Tour, 33405 Talence cedex, France, \url{mario.ricchiuto@inria.fr}}\\
{\footnotesize $^3$ Dipartimento di Matematica G. Castelnuovo, Universit\`a di Roma La Sapienza, piazzale Aldo Moro, 5, 00185, Rome, Italy, \url{davide.torlo@uniroma1.it}}}

\maketitle
\begin{abstract}
	Numerical methods for hyperbolic PDEs require stabilization. For linear acoustics, divergence-free vector fields should remain stationary, but classical Finite Difference methods add incompatible diffusion that dramatically restricts the set of discrete stationary states of the numerical method. Compatible diffusion should vanish on stationary states, e.g. should be a gradient of the divergence. 
	Some Finite Element methods allow to naturally embed this grad-div structure, e.g. the SUPG method or OSS. 
	We prove here that the particular discretization associated to them still fails to be constraint preserving. 
	We then introduce a new framework on Cartesian grids based on surface (volume in 3D) integrated operators inspired by Global Flux quadrature and related to mimetic approaches. We are able to construct constraint-compatible stabilization operators (e.g. of SUPG-type) and show that the resulting methods are vorticity-preserving. We show that the Global Flux approach is even super-convergent on stationary states, we characterize the kernels of the discrete operators and we provide projections onto them. 
\end{abstract}

\section{Introduction}
\label{sec:intro}

\subsection{Acoustic equations}

This paper focuses on the discretization of hyperbolic PDEs. Although
we have in mind applications to  hyperbolic conservation laws such
as the Euler or shallow water equations with source terms, we will work here in the much simpler setting of 
the linear wave equations in first-order form in the 2D and general forms:
\begin{align}\label{eq:acoustic}
	\begin{cases}
 \del_t u + \del_x p = 0, &  \\
 \del_t v + \del_y p = 0, \\
 \del_t p + \del_x u + \del_y v = 0, 
	\end{cases}\qquad 	\begin{cases}
	\del_t \vec v + \nabla p = 0, \\
	\del_t p + \nabla \cdot \vec v = 0,
	\end{cases}
\end{align}
for $u,v,p \colon \Omega \subset \mathbb R^2 \to \mathbb R$, and
with, in 2-d, the notation $\vec v = (u,v)$. We also introduce the notation 
\begin{align}
 \del_t q + J^x \del_x q + J^y \del_y q &= 0 
\end{align}
with 
\begin{align}
 q &= \veccc{u}{v}{p}, & J^x &= \left( \begin{array}{ccc} 0 & 0 & 1 \\ 0 & 0 &  0 \\ 1 & 0 & 0 \end{array} \right ), & J^y &= \left( \begin{array}{ccc} 0 & 0 & 0 \\ 0 & 0 &  1 \\ 0 & 1 & 0 \end{array} \right ) .
\end{align}
The system of linear acoustics possesses an involution:
\begin{align}
 \del_t (\nabla \times \vec v) &= 0,
\end{align}
which is reminiscent of involutions appearing e.g. in the (vacuum) Maxwell equation. 
The stationary states of linear acoustics are divergence-free, i.e.
\begin{equation}
	\partial_t q \equiv 0 \Longleftrightarrow \nabla \cdot \vec v \equiv 0 \text{ and }p\equiv p_0 \in \mathbb R.
\end{equation}
Both acoustics and Maxwell equations can be seen as toy-models for the more complex Euler equations and those of magnetohydrodynamics.

\subsection{Structure-preserving Finite Difference methods}\label{sec:FDstructure_preserving}

Numerical methods for hyperbolic problems require appropriate stabilization. It is introduced to obtain $L^2$ stability (or entropy stability in the non-linear case),
or to manage discontinuous solutions. One way to stabilize is to add numerical dissipation.
Many numerical methods for multi-dimensional hyperbolic problems contain stabilization initially derived in a one-dimensional setup.  
For instance, it is customary to compute the normal flux across an edge (or a face) by 
ignoring any signals propagating in the transverse direction, or emanating from the corners of the cell.
This one-dimensional stabilization applied in different directions has a practical impact onto numerical solutions, e.g. on stationary states characterized by a balance of contributions from different directions (see e.g. \cite{barsukow17a}). The restriction of this datum onto one direction is not stationary, and unsurprisingly the combined numerical diffusion from the one-dimensional problems does not generally cancel out. One observes the datum being diffused away instead of being kept stationary.

Not every kind of multi-dimensional information, however, leads to a method with special properties and improved behavior. For example, in \cite{barsukow17} the exact solution of the 4-quadrant Riemann problem has been used to derive a truly multi-dimensional Godunov method. Although it takes into account all the signals from corners, and generally makes no approximation in the evolution step, it fails to be stationarity preserving, or involution preserving.

In the context of linear acoustics on Cartesian grids, a numerical method based on one-dimensional Riemann solvers amounts to the following stabilization 
\begin{align}
	\begin{split}
 \del_t u + \del_x p &= \frac12 \Delta x \del_x^2 u + \text{h.o.t.},\\
 \del_t v + \del_y p &= \frac12 \Delta y \del_y^2 v + \text{h.o.t.} ,\\
 \del_t p + \del_x u + \del_y v &= \frac12 (\Delta x \del_x^2 p + \Delta y \del_y^2 p) + \text{h.o.t.}.
	\end{split}
\end{align}

One can show that $\del_x u + \del_y v = 0$ no longer remains stationary, unless $\del_x u = 0$, $\del_y v = 0$ individually. 
This has a dramatic impact on simulations, as setups that should remain stationary are now diffused, which can be understood as loss of consistency for long-time simulations. 
Numerical methods whose stationary states are described by a discretization of $\nabla \cdot \vec v = 0$ (without further constraints) are called \emph{stationarity preserving} \cite{barsukow17a} and therefore possess a rich set of stationary states. One can show that the low Mach number limit of the Euler equations is related to the long-time limit of linear acoustics \cite{jung2022steady,jung2024behavior} and that stationarity-preserving methods are also involution-preserving. Preservation of discrete involutions in the context of Maxwell equations usually is relevant for long-time stability and the correct coupling to matter.

An obvious approach to deriving a stationarity-preserving method is to ensure that the numerical diffusion of $\vec v$ is a function of the divergence. All the methods suggested in \cite{morton01,sidilkover02,jeltsch06,mishra09preprint,lung14,barsukow17a} essentially imply
\begin{align}
	\begin{split}
 \del_t u + \del_x p &= \frac12 \Delta x \del_x(\del_x u + \del_{y} v) + \text{h.o.t.}, \\
  \del_t v + \del_y p &= \frac12 \Delta y \del_y(\del_{x} u + \del_y v) + \text{h.o.t.}, \\
 \del_t p + \del_x u + \del_y v &= \frac12 (\Delta x \del_x^2 p + \Delta y \del_y^2 p) + \text{h.o.t.}, \label{eq:statpresdiffusion}
	\end{split}
\end{align}
i.e., the choice of $\nabla (\nabla \cdot \vec v)$ as the appropriate diffusion for the evolution of $\vec v$. Observe that $\nabla p = 0$ and $\del_x u + \del_y v = 0$ now are again characterizing the stationary states, i.e. the stationary states of the PDE are exempt from the effect of numerical diffusion.

An interesting dichotomy thus appears to govern the field of truly multi-dimensional methods. While numerical methods that perform well are often derived ad-hoc (e.g. \cite{barsukow20cgk}), those with a first-principles derivation do not generally show improved behavior. 
Another example is \cite{barsukow23nodal}, where a modified idea of how global conservation is related to local conservation yields a stationarity preserving method, but at one point an explicit choice is made without providing a fundamental reason for it. 
This points to a general lack of understanding how numerical methods for multi-dimensional problems should be derived and explains the interest in structure preserving numerical methods. 
Their improved performance in practice is another reason, of course. 
To provide a more general strategy to achieve stationarity preservation for stabilized continuous Finite Element methods is the aim of this paper.

\subsection{Structure-preserving Finite Element methods and failure of SUPG}

Finite element methods (FEM) are successful in achieving high order of accuracy and can be used on different types of computational grids. There exists a vast literature on structure preserving FEM, for instance among mixed FEM (e.g. \cite{campos16,wimmer20,arnold06}). Hyperbolic systems of conservation laws, however, require stabilization. This paper focuses on continuous Galerkin methods with an artificial diffusion and with the same choice of discretization space for scalars as for vector components. We consider to this end the Streamline-upwind Petrov-Galerkin (SUPG) \cite{brooks82,hughes86} and the Orthogonal Subscale Stabilization (OSS)
\cite{CODINA1997373,CODINA20001579,OSSCodinaBadia,michel2022spectral} approaches. Both methods allow to introduce the proper numerical diffusion structure. To see this for  SUPG
it is enough to recall that the method is obtained as  the weak form of the  modified equation
\begin{align}
 \del_t q + J^x \del_x q + J^y \del_y q =  \partial_x(\alpha h \,  J_x ( \del_t q + J^x \del_x q + J^y \del_y q )) + \partial_y(\alpha h \, J_y ( \del_t q + J^x \del_x q + J^y \del_y q ))
\end{align}
where $h$ is a characteristic mesh size, and $\alpha$ a stabilization parameter/matrix, which we consider constant in the following. For linear acoustics, when expanding the right-hand side we obtain the stabilized equations
\begin{align}
	\begin{cases}\label{eq:acoustic-supg}
 \del_t u + \del_x p =  \del_x ( \alpha h (  \del_t p + \del_x u + \del_y v)) ,\\
 \del_t v + \del_y p =  \del_y ( \alpha h (  \del_t p + \del_x u + \del_y v)) , \\
 \del_t p + \del_x u + \del_y v =\del_x ( \alpha h ( \del_t u + \del_x p))+\del_y ( \alpha h ( \del_t v + \del_y p))  .
	\end{cases}
\end{align}
Beside the time derivatives, the stabilization terms are essentially variational approximations of the grad-div operator for the velocity stabilization,
and of a standard Laplacian for the pressure equation. 
This approach thus seems to produce, besides some mixed space-time derivatives, exactly the terms in \eqref{eq:statpresdiffusion}. One might thus expect that this numerical method will be stationarity preserving and therefore vorticity preserving \cite{barsukow17a}. However, 
as we will show in Section~\ref{sec:standardstab} as well as  in the numerical tests, this is not the case.
The reason for this is  related to the fact that  the implication 
\begin{align}
 \del_x u + \del_y v &\equiv 0  & &\Rightarrow & \del_x^2 u + \del_x \del_y v &\equiv 0
\end{align}
is not true in the discrete:
\begin{align}
 \int \phi (\del_x u + \del_y v)  \dd x &\equiv 0  \text{ }\forall \varphi\in V_h^K & &\not\Rightarrow & \int \del_x \phi (\del_x u +\del_y v) \dd x = 0 \; \forall \varphi\in V_h^K.
\end{align}
where, on a given Cartesian tessellation  of the spatial domain, we denote by
$V_h^K$ is the $K$-th degree finite element approximation space.

\subsection{Restoring stationarity preservation  via  Global Flux quadrature}\label{sec:keyidea}

Flux globalization dates back to the work of    \cite{GASCON2001261,donat11}   in the context of approximations of balance laws 
$$
\partial_t q + \partial_x F = S(q,x)
$$
For this problem a relevant aspect is  the  super-convergent    (or even exact) approximation  of non-trivial steady states. 
To this end,    one can write the source term as a flux $R$ which is the primitive of the source term:
$$
R = R_0  -\int_{x_0}^xS(q(s,t),s)) ds.
$$
In this setting, discrete  steady equilibria  verify the relation
$$
\partial_x(F + R) =0 \Rightarrow F+R = G_0 \in \mathbb R
$$
with $G=F+R$ the so-called  global flux.

Following  \cite{MANTRI2023112673}, we can now  construct a finite element approximation $R_h$
of the flux $R$ which is, just as $S$, in the space $V^K_h$ of polynomials of degree $K$ (despite being a primitive of $S$), by using an integral operator $I_x$ which, in FEM, is local for each cell. This leads to 
\begin{equation}
	R_\alpha = R_0 - \int_{x_0}^{x_\alpha} S_h(x) \dd x, \, \forall \alpha   \Longleftrightarrow R=R_0 - I_x S,
\end{equation} 
with $R_\alpha$ the degrees of freedom of $R_h$ and the right notation is a vectorial version of the left one.

The continuous SEM approximation of the balance law  (with periodic BCs or neglecting BCs) 
can be succinctly written as 
\begin{equation}\label{eq:GF1DSEM}
M_x\dfrac{dq}{dt} + D_x {F} - D_xI _x {S} =0.
\end{equation}
where $M_x$ is the mass matrix, $D_x$ is a finite element weak derivative matrix and $I_x$ is the integrator localized in each cell (see Section~\ref{sec:supggfintegrators} for precise definitions of the notation).
In this approach, we have  replaced the mass matrix  $M_x$ in front of the nodal values of the source,
with   the product matrix $ D_xI_x $.    This modification  allows to factor the derivative matrix, so that 
 at steady state  the  scheme reduces to  
$$
{F} = {F}_0 + I _x {S} \,.
$$
The integrator naturally turns out to be the ODE solver associated  to the table $I_x$ applied to the flux ODE  \cite{MANTRI2023112673}
 $$
 F' = S(q(F),x)\,.
 $$
This    provides   a  clear characterization of the discrete steady state and gives a so-called approximate or discrete
well balanced principle, in the spirit of e.g. \cite{Castro2020,math9151799}. 
The integration table $I_x$ defines the  properties of the ODE solver. For Lagrangian basis functions on Gauss-Lobatto points, the well known LobattoIIIA
methods arise~\cite{Prothero74,hairer}. This method has  nodal consistency of order $h^{K+2}$ at internal nodes,
and $h^{2K}$ at  end-nodes, thus leading to a super-convergent method at steady state.

Due to the fully local structure of the method, and to the fact that it merely involves  a  particular approximation of the  weighted source integral $\int\varphi S$, 
the authors of \cite{MANTRI2023112673} proposed to refer to it as   \emph{global flux quadrature} (GFq). In this work we propose a genuine generalization of the above idea to multi-dimensional equilibria.
When considering the last equation in~\eqref{eq:acoustic}, we observe that it can be written in two ways
$$
\begin{aligned}
\del_t p + \del_x u + \del_y v =  \del_t p +  \del_x u +\del_x  \bigg(\int_{x_0}^x \del_y v \, \dd s\bigg)  =  \del_t p +   \del_y v +\del_y  \bigg(\int_{y_0}^y \del_x u\, \dd s\bigg)  = 0.
\end{aligned}
$$
We propose to couple the $x$ and $y$ derivatives by symmetrizing
the two directional global flux quadrature formulations as
\begin{equation}\label{eq:GF2Didea}
 \del_t p + \del_x \del_y  \bigg(\int_{y_0}^y \del_x u \,\dd s + \int_{x_0}^x \del_y v\, \dd s\bigg)  = 0.
\end{equation}
In this paper we combine this idea with a high-order grad-div stabilized continuous Finite Element approximations leading to stationarity preserving methods.


\subsection{Overview of the paper}

The paper is structured as follows: Section~\ref{sec:finitedifference} introduces a difference/matrix notation for tensor-product FEM spaces on Cartesian grids that is used subsequently. In Section~\ref{sec:standardstab}, we prove that classical grad-div stabilizations (such as SUPG and OSS) are in general not constraint preserving: the kernels of the stabilizing term and of the Galerkin term do not have a sufficiently large intersection. Global Flux gives rise to a discretization of the divergence different from the one of continuous FEM; it is analyzed in Section~\ref{sec:globalflux}, where in particular exact projections in the discrete kernel space and nodal consistency estimates and the super-convergent behavior are provided (Section~\ref{sec:well_prepared_ic}). The Global Flux approach is applied in Sections~\ref{ssec:globalfluxsupg} and~\ref{ssec:globalfluxoss} to construct constraint-compatible SUPG and OSS stabilizations. In Section~\ref{sec:kernels}, spurious modes in the kernels of the discrete operators are studied. In Section~\ref{ssec:discreteinvolutions}, curl involutions are characterized using Fourier symbols, and explicit formulas are provided in the $\mathbb Q^1$ case.
The time discretization is described in Section~\ref{sec:time} and numerical results follow in Section~\ref{sec:numerical}.

\section{Cartesian grids, tensor products, and  discrete Fourier transform for  Finite Elements} \label{sec:finitedifference}

\subsection{General definitions}

\subsubsection{One-dimensional Finite Element spaces}

The study shall be restricted to Cartesian grids.  We consider two one-dimensional domains $\Omega^x,\Omega^y\subset \mathbb R$ and their product $\Omega := \Omega^x \times \Omega^y$. We define a uniform tessellation of each one-dimensional domain $\Omega_{\Delta x}^x = \cup_{i=0}^{N_x-1} E^x_i$, $\Omega^y_{\Delta y} = \cup_{j=0}^{N_y-1} E^y_j$, of elements $E_i^x=[x_{i},x_{i+1}],E_j^y=[y_{j},y_{j+1}]$ with $|E^x_i|=\Delta x$ and $|E_j^y|=\Delta y$ for all $i,j$. 

To define the continuous Finite Element spaces, we introduce $K+1$ points in each one-dimensional cell $$x_{i,p} = \hat{x}_p \Delta x+x_{i} \in E^x_i, \text{ for }p=0,\dots,K, \text{ and }y_{j,\ell} = \hat{x}_\ell \Delta y+y_{j}\in E^y_j, \text{ for }\ell=0,\dots,K$$ that we will use to define the Lagrangian basis functions.
In particular, here we will consider points $\lbrace\hat{x}_p\rbrace_{p=0}^K \subset [0,1]$ with $\hat x_0 = 0< \dots < x_i < \dots <\hat x_K = 1$, e.g. those of Gauss--Lobatto, such that $x_{i,0}=x_{i-1,K}$ for $i=1,\dots,N_x-1$ and similarly for $y$. 

We introduce a unique numbering for the point $p$ in cell $i$ with a Greek alphabet index $\alpha := iK+p \in [0,N_xK]$, $p \in [0, K-1]$, so that we will refer uniquely to point $x_\alpha = x_{i,p}$. Let us define by $M_x+1=N_xK+1$ and $M_y+1=N_yK+1$ the number of points in each direction. There are on average $K-1$ points per cell, but each cell has access to $K$ points. We will switch between these two notations according to our needs.

We can now introduce the continuous Finite Element spaces of degree $K$ over one/two-dimensional domains as
\begin{subequations}
\begin{align}
	V_\dx:=V^K_{\Delta x}(\Omega_{\Delta_x}^x):=&\left\lbrace q \in \mathcal{C}^0(\Omega_{\dx}^x): q|_E\in \mathbb P^K(E), \forall E \in \Omega^x_\dx\right\rbrace,\\
	V_\dy:=V^K_{\dy}(\Omega_{\Delta_y}^y):=&\left\lbrace q \in \mathcal{C}^0(\Omega_{\dy}^y): q|_E\in \mathbb P^K(E), \forall E \in \Omega^y_\dy\right\rbrace,
\end{align}
\end{subequations}
where we denote by $\mathbb P^K$ the space of univariate polynomials of degree at most $K$ and by $\mathbb Q^K$ the space of multivariate polynomials of degree at most $K$ in each variable.

In particular, we choose as basis of these spaces the high order hat functions that interpolate the points defined above. In each one-dimensional cell $E_i^x$, we consider $\varphi^x_{i,p}(x) \in V_{\Delta x}$ such that $\varphi^x_{i,p}|_{E^x_i}(x) \in\mathbb P^K(E^x_i)$ and $\varphi^x_{i,p}(x_{i,\ell}) = \delta_{j,\ell}$ for all $\ell,p = 0,\dots,K$, with $\delta$ the Kronecker delta. Moreover, since $\varphi$ must be continuous, we have
$$\text{supp}(\varphi^x_{i,p})= E^x_i\text{ for }p=1,\dots,K-1,\quad \text{supp}(\varphi^x_{i,0}) = E^x_{i-1} \cup E^x_i \text{ and } \text{supp}(\varphi^x_{i,K}) = E^x_{i} \cup E^x_{i+1},$$ 
recalling that $\varphi^x_{i-1,K}=\varphi^x_{i,0}$. 
The same holds for the basis functions of the $y$ space.
Moreover, $V_\dx^K(\Omega_{\Delta x}^x)=\text{span}\lbrace \varphi^x_\alpha \rbrace_{\alpha=0}^{M_x}$ with $\varphi^x_\alpha(x) \equiv \varphi^x_{i,p}(x)$ for $\alpha=iK+p$. We use the same spaces to discretize vector components as those we use for scalars.

\subsubsection{Tensor-product Finite Element spaces}

We define the two dimensional tessellation of $\Omega$ as
\begin{align}
	\Omega_h  = \bigcup_{i,j=0}^{N_x-1,N_y-1} E_{ij} 
\end{align}
with $h=\min \lbrace \Delta x, \Delta y \rbrace$ and $E_{ij}:= E^x_i \times E^y_j$. 

This leads to the definition of $V_h$ as a tensor product of the functional spaces
\begin{align}
 V_h:=V^K_h(\Omega_h):=&\left\lbrace q \in \mathcal{C}^0(\Omega_h): q|_E\in \mathbb Q^K(E), \forall E \in \Omega_h\right\rbrace,
\end{align}
which is evident in its basis $\lbrace \varphi_{\alpha;\beta} \rbrace_{\alpha,\beta=0}^{M_x,M_y}$ with $\varphi_{\alpha;\beta}(x,y):=\varphi^x_{\alpha}(x) \varphi^y_\beta(y)$.
Finally, we will describe a function $q \in V_\dx$ as 
\begin{align}
q_h(x) = \sum_{\alpha=0}^{M_x} q_\alpha \varphi^x_\alpha(x)= \sum_{i=0}^{N_x-1}\sum_{p=0}^K q_{i,p}\,\varphi_{i,p}|_{E^x_i}(x) \label{eq:qhdef1d}
\end{align}
and a function $q \in V_h$ as 
\begin{align}
q_h(x,y) = \sum_{\alpha=0;\beta=0}^{M_x;M_y} q_{\alpha;\beta} \varphi_{\alpha;\beta}(x,y)=\sum_{i=0;j=0}^{N_x;N_y}\sum_{p=0;\ell=0}^{K;K} q_{i,p;j,\ell}\, \varphi^x_{i,p}|_{E^x_i}(x)\,\varphi^y_{j,\ell}|_{E^y_j}(y). \label{eq:qhdef2d}
\end{align}

See Figure~\ref{fig:FEM_DOFs} for a graphical representation of degrees of freedom (DOFs) $q_{i,p;j,\ell}$ in a cell $E_{ij}$ for $K=4$.

In the following, we will give a more Finite Difference flavored description of classical FEM operators, in order to introduce Fourier symbols.

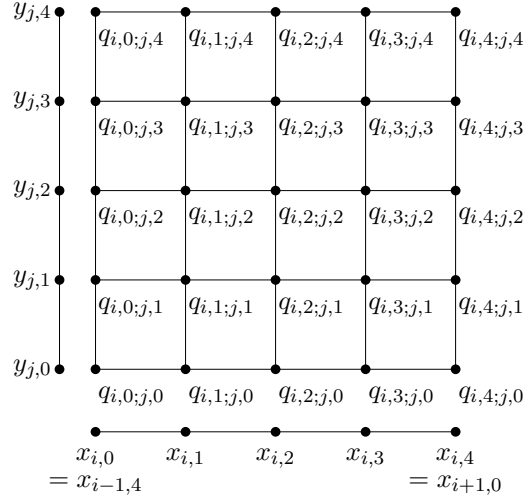
\begin{figure}
	\centering
\adjustbox{max width=0.45\textwidth}{
\begin{tikzpicture}[scale=1.3]
	\draw[step=1cm,black,very thin] (0,0) grid (4,4);
	\foreach \x in {0,...,4}
	{
		\foreach \y in {0,...,4}
		{
			\draw (\x+0.4,\y-0.3) node{$q_{i,\x;j,\y}$};
			\node[circle, fill=black, inner sep=0.5mm] at (\x,\y){};
		}
	}
	\draw[step=1cm,black,very thin] (0,-0.7) -- (4,-0.7);
	\foreach \x in {0,...,4}
	{
		\draw (\x,-1) node{$x_{i,\x}$};
		\node[circle, fill=black, inner sep=0.5mm] at (\x,-0.7){};
	}
	\draw (4.0,-1.3) node{$=x_{i+1,0}$};
	\draw (-0.0,-1.3) node{$=x_{i-1,4}$};
	\draw[step=1cm,black,very thin] (-0.4,0) -- (-0.4,4);
	\foreach \y in {0,...,4}
	{
		\draw (-0.7,\y) node{$y_{j,\y}$};
		\node[circle, fill=black, inner sep=0.5mm] at (-0.4,\y){};
	}
\end{tikzpicture}
}
\caption{Notation of the degrees of freedom for a function $q_h$ in element $E_{ij}$ for $\mathbb Q^4$ elements}
\label{fig:FEM_DOFs}
\end{figure}

\subsection{Bridging finite element and uni-directional difference formulae}

We start with a definition which can be applied to 
 $\mathbb P^1$ FEM and finite differences which can be mapped onto each other. We assume periodic boundary conditions for simplicity.

\begin{definition}[Finite differences] \label{def:findiff}
 Consider a one-dimensional equidistant grid with values $(q_{i})_{i\in\mathbb Z}$ and a linear unidirectional finite difference formula
 \begin{align}
 (Dq)_i = \sum_{k \in \mathbb Z} \alpha_k q_{i + k}. \label{eq:findiff1d}
 \end{align}
 
 \begin{itemize}
  \item Define $k_{\text{max}} \in \mathbb N^0$ as the smallest value for which the sum can be restricted
  \begin{align}
   (Dq)_i \equiv \sum_{k = - k_\text{max}}^{k_\text{max}} \alpha_k q_{i + k}.
  \end{align}

  \item We call $D \colon \mathbb R^\mathbb Z \to  \mathbb R^\mathbb Z$ whose action on $q$ at $i$ is defined by \eqref{eq:findiff1d} the \emph{finite difference operator}. 
  
  \item We call a finite difference formula \emph{compact} if $\mathrm{supp}\, \alpha \subset \mathbb Z$ is finite, i.e., if the sum in \eqref{eq:findiff1d} is finite. We shall also call the corresponding finite difference operator \emph{compact} in this case.
  
  \item The \emph{characteristic polynomial} $\mathbb F_{t_x}(D)$ of $D$ is the univariate Laurent polynomial $\sum_{k \in \mathbb Z} \alpha_k t_x^k$ in $t_x$. 

 \end{itemize}
\end{definition}

Up to prefactors, the discrete Fourier transform (i.e. inserting $q_i := \exp(\ii k_x i \Delta x)$) of $(Dq)_i$ lets appear precisely its characteristic polynomial if one defines $t_x = \exp(\ii k_x \Delta x)$ (see e.g. \cite{barsukow17a}).

Throughout the paper we use arbitrarily high-order methods, discussed now. 
In the context of Galerkin methods, for $K>1$, different degrees of freedom are involved. 

\begin{example}
The following derivative operator evaluated in the DoF $\beta = (i,s)$ reads
\begin{align}
 (D_x q)_{i,s} = \int \phi^x_{i,s}(x) \partial_x q_h(x) \dd x = &
 \sum_{j=0}^{N_x-1} \int_{E_j^x} \phi^x_{i,s}(x)  \sum_{p=0,K} \partial_x \phi^x_{j,p}(x) q_{j,p}  \\
 \equiv&\sum_{k \in \lbrace -1,0,1\rbrace} \sum_{p= 0}^{K-1} q_{i+k,p} \int_{E_{i+k}^x} \phi^x_{i,s}(x)\phi^x_{i+k,p}(x) \dd x , \label{eq:Dxexamplestencil}
\end{align}
Due to translation invariance the right-hand side integral is going to depend only on $k$, $s$ and $p$, not on $i$.
\end{example}

This motivates the following

\begin{definition}[High-order differences]
On  a one-dimensional equidistant grid, embedding repeated sets of not necessarily equidistant collocation points, consider a linear, high-order unidirectional difference formula
 \begin{align}
  (Dq)_{i,s} = \sum_{k \in \mathbb Z} \sum_{p = 1}^K \alpha^s_{k,p} q_{i+k,p} \label{eq:highorderfdformulageneral}
 \end{align}
The fact that the collocation points are repeated implies translational invariance, which allows to choose $\alpha$ without a dependence on $i$. Here, $q_{i+k,p}$ are the same as in \eqref{eq:qhdef1d}.
 \begin{itemize}
  \item We call $D : \mathbb R^{\mathbb Z} \times [1, K]  \to \mathbb R^{\mathbb Z} \times [1, K]$ the \emph{high-order difference operator}. Observe that the operator has $K$ components (in function of which basis element the expression is tested against).
  \item Define $k_\text{max} \in \mathbb N^0$ to be the smallest integer for which one can restrict the summation:
  \begin{align}
  (Dq)_{i,s} \equiv \sum_{k = - k_\text{max}}^{k_\text{max}} \sum_{p = 1}^K \alpha^s_{k,p} q_{i+k,p} .
 \end{align}
 As is obvious from \eqref{eq:Dxexamplestencil}, for the usual operators appearing in FEM, $k_\text{max} = 1$.

 \item The \emph{characteristic polynomial} $\mathbb F_{t_x}(D)$ of $D$ is the matrix of univariate Laurent polynomials in $t_x$
 \begin{align}
  \mathbb F_{t_x}(D) := \left( \begin{array}{ccc}  \sum_{k \in \mathbb Z} \alpha^1_{k,1} t_x^k & \cdots &  \sum_{k \in \mathbb Z} \alpha^1_{k,K} t_x^k  \\
   \vdots & \ddots & \vdots \\
   \sum_{k \in \mathbb Z} \alpha^K_{k,1} t_x^k & \cdots & \sum_{k \in \mathbb Z} \alpha^K_{k,K} t_x^k 
   \end{array} \right), \qquad \text{i.e., }\mathbb F_{t_x}(D)_{s,p} = \sum_{k\in\mathbb Z}\alpha^s_{k,p} t_x^k .
  \end{align}
  (We still call this matrix a ``polynomial'' because it can be considered a polynomial in $t_x$ with matrix-valued coefficients.)
   \end{itemize}
\end{definition}

\begin{example} 
 Consider the mass matrix appearing in
 \begin{align}
  \Delta x (M_x q)_{i,s} = \int \phi^x_{i,s}(x) q_h(x) \dd x &=  \sum_{k =0}^{N_x-1} \sum_{p = 1}^K \left( \int_{E_k^x} \phi^x_{i,s} (x) \phi^x_{k,p}(x) \dd x \right )q_{k,p}.
 \end{align}
 Then, by comparison with \eqref{eq:highorderfdformulageneral} one finds (using the translational invariance)
 \begin{align}
  \alpha_{k,p}^s &= \int_{\mathbb R} \phi_{0,s}  \phi_{k,p}(x) \dd x.
 \end{align}
\end{example}

\begin{definition}[Composition in the high-order case] \label{def:compositionhighorder}
 Given two high-order unidirectional difference formulas on a one-dimensional grid
 \begin{align}
  (Aq)_{i,r} &= \sum_{k \in \mathbb Z} \sum_{s = 1}^K \alpha^r_{k,s} q_{i+k,s} &
  (Bq)_{i,r} &= \sum_{k \in \mathbb Z} \sum_{s = 1}^K \beta^r_{k,s} q_{i+k,s}, &
 \end{align}
 we define the \emph{composition} $AB$ of the high-order difference operators $A$ and $B$ by
 \begin{align}
  ((AB) q)_{i,r} &:= \sum_{k \in \mathbb Z} \sum_{s = 1}^K \alpha^r_{k,s} (Bq)_{i+k,s} = \sum_{k \in \mathbb Z} \sum_{s = 1}^K \sum_{k' \in \mathbb Z} \sum_{s' = 1}^K \alpha^r_{k,s}  \beta^s_{k',s'} q_{i+k+k',s'}.
 \end{align}
\end{definition}
%
%
%
%

\begin{proposition}\label{thm:charpolcomposition}
 The characteristic polynomial $(\mathbb F_{t_x}(RS))_{r,s} $ of the composition of two high-order difference operators $R$ and $S$ is the (matrix) product $\sum_{p = 1}^K (\mathbb F_{t_x}(R))_{r,p}  (\mathbb F_{t_x}(S))_{p,s}.$ of their characteristic polynomials.
\end{proposition}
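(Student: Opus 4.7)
The proof is essentially a direct computation. The plan is to unpack the definition of the composition $RS$, read off its coefficients $\gamma^r_{m,s'}$, and then verify that summing them against $t_x^m$ reproduces the matrix product of the two characteristic polynomials. The main conceptual point is that convolution in the shift index $k$ turns into multiplication of Laurent polynomials in $t_x$, while the intra-cell index $s \in \{1,\dots,K\}$ is contracted as ordinary matrix multiplication.

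More concretely, using Definition~\ref{def:compositionhighorder} one has
\begin{align}
 ((RS)q)_{i,r} = \sum_{k,k' \in \mathbb Z} \sum_{s,s' = 1}^K \rho^r_{k,s}\, \sigma^s_{k',s'}\, q_{i+k+k',\,s'}.
\end{align}
Introducing the summation variable $m := k+k'$, the coefficients of $RS$ in the form \eqref{eq:highorderfdformulageneral} are
\begin{align}
 \gamma^r_{m,s'} = \sum_{s=1}^K \sum_{\substack{k,k'\in\mathbb Z \\ k+k'=m}} \rho^r_{k,s}\, \sigma^s_{k',s'}.
\end{align}
Then I multiply by $t_x^m$ and use $t_x^m = t_x^k t_x^{k'}$ whenever $k+k'=m$ to split the double sum:
\begin{align}
 (\mathbb F_{t_x}(RS))_{r,s'} = \sum_{m\in\mathbb Z} \gamma^r_{m,s'}\, t_x^m = \sum_{s=1}^K \Bigl(\sum_{k\in\mathbb Z} \rho^r_{k,s}\, t_x^k\Bigr) \Bigl(\sum_{k'\in\mathbb Z} \sigma^s_{k',s'}\, t_x^{k'}\Bigr) = \sum_{s=1}^K (\mathbb F_{t_x}(R))_{r,s}\,(\mathbb F_{t_x}(S))_{s,s'},
\end{align}
which is exactly the stated matrix product.

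There is no real obstacle here: everything is a bookkeeping exercise once the indices are lined up. The only subtle point to address explicitly is that the interchange of the infinite sums over $k$ and $k'$ with the sum defining $\gamma^r_{m,s'}$ is legitimate. In the compact case ($k_\text{max}<\infty$) this is trivial, since all sums are finite; for general Laurent series it is the standard justification for why convolution of sequences corresponds to the product of their generating functions. It is also worth noting, as a sanity check, that the factor $\rho^r_{k,s}$ tests the inner operator $S$ against the $s$-th basis element, exactly matching the convention $(Sq)_{i+k,s}$ used in Definition~\ref{def:compositionhighorder}, which is why the contracted index is the same $s$ on both sides of the product — giving a genuine matrix product rather than some transposed variant.
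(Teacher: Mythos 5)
Your proposal is correct and follows essentially the same route as the paper's proof: unpack the composition from Definition~\ref{def:compositionhighorder}, use $t_x^{k+k'}=t_x^k t_x^{k'}$ to factor the double sum over shifts, and contract the intra-cell index to obtain the matrix product. The only difference is cosmetic — you introduce the grouped coefficient $\gamma^r_{m,s'}$ and comment on the interchange of sums, which the paper leaves implicit since the operators involved are compact.
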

The proof is given in Appendix~\ref{sec:proofcharpolcomposition}. In the high-order case the composition is not commutative, as can easily be seen from the fact that the associated operation on the characteristic polynomials is a matrix product.

\subsection{Tensor products and multidirectional difference formulae}

Consider now a 2-dimensional Cartesian grid as described in Section~\ref{sec:finitedifference} with $q_h\in V_h^K$.
We  consider the following generalized high order finite differences in this context.
 

Obviously, tensor based FEM allows to factor one dimensional operators. For example for the mass matrix one has 
\begin{equation*}\begin{split}
  \iint \phi^x_{i,s}(x) \phi^y_{j,p}(y) q_h(x,y) \dd x \dd y &= \sum_{E_{k\ell}\in\Omega_h} \sum_{r,t=0}^K \left(  \iint_{E_{k\ell}}  \phi^x_{i,s}(x) \phi^y_{j,p}(y)\phi^x_{k,r}(x) \phi^y_{\ell,t}(y)\dd x \dd y \right)  q_{k,r; \ell,t}\\
  &\!\!\!\!\!\!\!\!\!\!\!\!\!\!\!\!\!\!\!\!\!\!\!\!\!\!\!\!\!\!\!\!\!\!\!\!\!\!\!\!
  =\sum_{E_k^x \in \Omega_{\Delta x}^x}  \sum_{r,t=0}^K \left( \int_{E_k^x} \phi^x_{i,s}(x) \phi^x_{k,r}(x) \dd x  \right)  \sum_{E^y_\ell \in \Omega_{\Delta y}^y} \left(
   \int_{E^y_\ell} \phi^y_{j,p}(y) \phi^y_{\ell,t}(y) \dd y\right) q_{k,r; \ell, t} \\
   &\!\!\!\!\!\!\!\!\!\!\!\!\!\!\!\!\!\!\!\!\!\!\!\!\!\!\!\!\!\!\!\!\!\!\!\!\!\!\!\!
   =\sum_{k = 0}^{N_x-1} \sum_{\ell=0}^{N_y-1} \sum_{s,p = 0}^K q_{i+k,s; j+\ell,p} \int_{E^x_k} \phi^x_{i,r}(x) \phi^x_{i+k,s}(x) \dd x \int_{E^y_\ell} \phi^y_{j,t}(y) \phi^y_{j+\ell,p}(y) \dd y . \label{eq:q2massmatrixexample}
\end{split} \end{equation*}

This motivates the following definition.

\begin{definition}[Tensor-product high-order operators]
 Consider two high-order unidirectional difference formulas on 1-dimensional Cartesian grids
 \begin{align}
  (A u)_{i,r} &= \sum_{k \in \mathbb Z} \sum_{s = 1}^K \alpha^r_{k,s} u_{i+k,s} &
 (B v)_{j,t} &= \sum_{\ell \in \mathbb Z} \sum_{p = 1}^K \beta^t_{\ell,p} v_{j+\ell,p}.
 \end{align}
 \begin{itemize}
 \item Then, the linear bidirectional high-order difference formula applied on $q\in V_h^K$ 
 \begin{align}
  ((A \otimes B)q)_{i,r;j,t} &:= \sum_{(k,\ell) \in \mathbb Z^2} \sum_{s,p = 1}^K \alpha^r_{k,s} \beta^t_{\ell,p} q_{i+k,s;j+\ell,p}
 \end{align}
 is said to be the difference formula associated to the \emph{tensor product} $A \otimes B$ of the difference operators $A$ and $B$.

 \item The \emph{characteristic polynomial} $\mathbb F_{t_x,t_y}(A\otimes B)$ of a high-order difference operator $A\otimes B$ 
 is the following matrix of bivariate Laurent polynomials in $t_x, t_y$
  \begin{align}
 	(\mathbb F_{t_x, t_y}(A\otimes B))_{r,t} &:= \sum_{(k,\ell) \in \mathbb Z^2} \sum_{s,p = 1}^K \alpha^{r}_{k,s} \beta^t_{\ell,p} t_x^k t_y^\ell.
 \end{align}
 \item The composition of two tensor-product high-order operators $R := A \otimes B$, $S := C \otimes D$ is defined as
 \begin{align}
 	(RS)q &:= R(Sq).
 \end{align}
 \end{itemize}
\end{definition}

\begin{proposition}[Fourier transform of the tensor product in the high-order case]
 The characteristic polynomial $\mathbb F_{t_x,t_y}(A \otimes B)$ of $A \otimes B$ is the standard Kronecker product of the polynomials of $A$ and $B$:
 \begin{align}
  \mathbb F_{t_x,t_y}(A \otimes B) = \mathbb F_{t_x}(A) \otimes \mathbb F_{t_y}(B).
 \end{align}
\end{proposition}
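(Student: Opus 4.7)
The plan is to verify the identity by computing both sides in the same basis and reading off coefficients, which amounts to an indexing exercise once one is careful about how the Kronecker product of matrices of Laurent polynomials is arranged.

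First I would unpack the right-hand side. By the standard definition of the Kronecker product of two $K\times K$ matrices, the $\bigl((r,t),(s,p)\bigr)$-entry of $\mathbb F_{t_x}(A)\otimes \mathbb F_{t_y}(B)$ equals $\bigl(\mathbb F_{t_x}(A)\bigr)_{r,s}\cdot\bigl(\mathbb F_{t_y}(B)\bigr)_{t,p}$. Substituting the one-dimensional definitions gives
\begin{equation*}
\Bigl(\sum_{k\in\mathbb Z}\alpha^{r}_{k,s}\,t_x^{k}\Bigr)\Bigl(\sum_{\ell\in\mathbb Z}\beta^{t}_{\ell,p}\,t_y^{\ell}\Bigr)
=\sum_{(k,\ell)\in\mathbb Z^{2}}\alpha^{r}_{k,s}\,\beta^{t}_{\ell,p}\,t_x^{k}\,t_y^{\ell}.
\end{equation*}
The crucial property used here is that $\alpha^{r}_{k,s}$ depends only on the $x$-indices $(r,k,s)$ and $\beta^{t}_{\ell,p}$ only on the $y$-indices $(t,\ell,p)$, so the product of the two sums factors the monomial $t_x^{k}t_y^{\ell}$ cleanly.

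Second, I would compare this to the left-hand side. By the definition of the tensor-product operator, the action of $A\otimes B$ on $q$ at the multi-index $(i,r;j,t)$ has coefficient $\alpha^{r}_{k,s}\beta^{t}_{\ell,p}$ multiplying $q_{i+k,s;\,j+\ell,p}$, and the characteristic polynomial is obtained by substituting each shift $k,\ell$ by the monomial $t_x^{k}t_y^{\ell}$. Thus the $\bigl((r,t),(s,p)\bigr)$-entry of $\mathbb F_{t_x,t_y}(A\otimes B)$ is exactly $\sum_{(k,\ell)}\alpha^{r}_{k,s}\beta^{t}_{\ell,p}\,t_x^{k}t_y^{\ell}$, matching the expression derived above.

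Finally, since both matrices agree entry by entry under the same block ordering of the index pairs $(r,t)$ and $(s,p)$, the identity $\mathbb F_{t_x,t_y}(A\otimes B)=\mathbb F_{t_x}(A)\otimes \mathbb F_{t_y}(B)$ follows. There is no analytic obstacle: compactness of the stencils (or, more loosely, the fact that only finitely many $\alpha^{r}_{k,s}$ and $\beta^{t}_{\ell,p}$ are nonzero for each fixed $(r,s)$, $(t,p)$) guarantees all sums are finite and Fubini-like reordering of the double sum is trivially valid. The only real difficulty is bookkeeping: one must be consistent about whether the ``block'' index of the Kronecker product is taken row-major in $r$ then $t$ or vice versa, which is fixed by the convention used in the definition of $\mathbb F_{t_x,t_y}(A\otimes B)$.
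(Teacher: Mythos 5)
Your proposal is correct and follows essentially the same route as the paper: both reduce the claim to the factorization $\sum_{(k,\ell)}\alpha^{r}_{k,s}\beta^{t}_{\ell,p}t_x^{k}t_y^{\ell}=\bigl(\sum_k\alpha^{r}_{k,s}t_x^k\bigr)\bigl(\sum_\ell\beta^{t}_{\ell,p}t_y^\ell\bigr)$, which holds because the coefficients separate by direction. You merely start from the Kronecker-product side and compare entries, whereas the paper expands the definition of $\mathbb F_{t_x,t_y}(A\otimes B)$ and factors it; the content is identical.
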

\begin{proof}
 Using the definition, we obtain
 \begin{align}
  \mathbb F_{t_x, t_y}(A \otimes B)_{r,z} &= \sum_{(k,\ell) \in \mathbb Z^2} \sum_{s,p = 1}^K \alpha^{r}_{k,s}\beta^{z}_{\ell,p} t_x^k t_y^\ell 
 =\sum_{k \in \mathbb Z} \sum_{s=1}^K  \alpha^{r}_{k,s} t_x^k\sum_{\ell \in \mathbb Z} \sum_{p = 1}^K \beta^{z}_{\ell,p}  t_y^\ell, 
 \end{align}
which is the statement of the theorem.
\end{proof}



\begin{proposition} \label{thm:highordercompositiontensor}
Consider high-order unidirectional difference formulas on a two-dimensional grid
 \begin{align}
 	\begin{split}
  (A^xu)_{i,r} = \sum_{k \in \mathbb Z} \sum_{s = 1}^K (\alpha^x)^{r}_{k,s}  u_{i+k,s},\qquad &
  (A^yv)_{j,t} = \sum_{\ell \in \mathbb Z} \sum_{p = 1}^K (\alpha^y)^{t}_{\ell,p}  v_{j+\ell,p}, \\
  (B^xu)_{i,r} = \sum_{k \in \mathbb Z} \sum_{s = 1}^K (\beta^x)^{r}_{k,s}  u_{i+k,s},\qquad  &
  (B^yv)_{j,t} = \sum_{\ell \in \mathbb Z} \sum_{p = 1}^K (\beta^y)^{t}_{\ell,p}  q_{j+\ell,p} .
 	\end{split}
 \end{align}
 The composition of tensor products is the tensor product of compositions:
 \begin{align}
  (A^x \otimes A^y)  (B^x \otimes B^y) = (A^x B^x) \otimes (A^y B^y).
 \end{align}

\end{proposition}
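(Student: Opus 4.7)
The plan is to unfold the composition directly from the definitions and then regroup the summations, exploiting the fact that the $x$ and $y$ indices are manipulated independently.

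First I would apply the inner operator to $q$. By the definition of the tensor product, for any $q \in V_h^K$,
\begin{equation*}
 ((B^x \otimes B^y)q)_{i,r;j,t} = \sum_{(k,\ell)\in\mathbb Z^2}\sum_{s,p=1}^K (\beta^x)^r_{k,s}(\beta^y)^t_{\ell,p}\, q_{i+k,s;j+\ell,p}.
\end{equation*}
Then I would feed this into $A^x \otimes A^y$, relabelling the outer summation indices to keep things separated from the inner ones:
\begin{equation*}
 ((A^x \otimes A^y)(B^x \otimes B^y)q)_{i,r;j,t} = \sum_{(m,n)\in\mathbb Z^2}\sum_{u,v=1}^K (\alpha^x)^r_{m,u}(\alpha^y)^t_{n,v}\,((B^x \otimes B^y)q)_{i+m,u;j+n,v}.
\end{equation*}
Substituting the previous line yields a quadruple sum over $(m,n,k,\ell)$ and $(u,v,s,p)$ whose summand is
\begin{equation*}
 (\alpha^x)^r_{m,u}(\beta^x)^u_{k,s}\,(\alpha^y)^t_{n,v}(\beta^y)^v_{\ell,p}\, q_{i+m+k,s;j+n+\ell,p}.
\end{equation*}

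The next step is the regrouping, which is the heart of the argument. Because the $x$-coefficients depend only on $(m,k,u)$ and the $y$-coefficients only on $(n,\ell,v)$, and because $q_{i+m+k,s;j+n+\ell,p}$ is indexed in $x$ only through $M:=m+k$ and in $y$ only through $N:=n+\ell$, the quadruple sum factors. Changing variables from $(m,k)$ to $(m,M)$ and from $(n,\ell)$ to $(n,N)$, the summand becomes
\begin{equation*}
 \Bigl(\sum_{m\in\mathbb Z}\sum_{u=1}^K (\alpha^x)^r_{m,u}(\beta^x)^u_{M-m,s}\Bigr)\Bigl(\sum_{n\in\mathbb Z}\sum_{v=1}^K (\alpha^y)^t_{n,v}(\beta^y)^v_{N-n,p}\Bigr) q_{i+M,s;j+N,p}.
\end{equation*}
By Definition~\ref{def:compositionhighorder}, the first bracket is exactly the stencil coefficient $(\gamma^x)^r_{M,s}$ of the one-dimensional composition $A^x B^x$, and similarly the second bracket is the coefficient $(\gamma^y)^t_{N,p}$ of $A^y B^y$. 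Plugging this back and summing over $M,N,s,p$ gives precisely the definition of $((A^xB^x)\otimes(A^yB^y))q$ at $(i,r;j,t)$, which is the desired identity.

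There is no conceptual obstacle here: the identity is the high-order analogue of the Kronecker mixed-product property, and the only non-trivial bit is keeping the indices disentangled. The one place to be careful is in justifying the reordering and change of variables, which is legitimate because all sums in $k,\ell,m,n$ are finite (the operators are compact in the sense of Definition~\ref{def:findiff}, so only finitely many terms are nonzero). One could alternatively argue via characteristic polynomials using the Kronecker mixed-product property, but this would require first extending Proposition~\ref{thm:charpolcomposition} to bivariate symbols, so the direct index computation above is the most economical route.
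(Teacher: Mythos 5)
Your proposal is correct and follows essentially the same route as the paper's Appendix~\ref{sec:proofhighordercompositiontensor} proof: insert the definitions, expand the composition into a quadruple sum, and factor it into an $x$-part and a $y$-part that are recognized as the one-dimensional compositions $A^xB^x$ and $A^yB^y$. Your write-up is, if anything, slightly more careful about the chaining of the intermediate DoF indices and about justifying the reordering via compactness of the stencils, but the argument is the same.
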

The proof can be found in the Appendix~\ref{sec:proofhighordercompositiontensor}.

In the paper we use  several   operators  which are listed hereafter for completeness:
\begin{subequations}\label{eq:FEM1Dmatrices}
\begin{align}
    &(M_x)_{\alpha,\beta}:= \int_{\mathbb R} \varphi_\alpha^x(x) \varphi_\beta^x (x ) \dd x,\quad & (\id_x)_{\alpha,\beta}=\delta_{\alpha,\beta},\quad &\\
	&(D_x)_{\alpha,\beta}:= \int_{\mathbb R} \varphi_\alpha^x(x) \partial_x \varphi_\beta^x (x ) \dd x,\quad 
	&(D^x)_{\alpha,\beta}:= \int_{\mathbb R} \partial_x \varphi_\alpha^x(x) \partial_x \varphi_\beta^x (x ) \dd x,\\	
	&(D_x^x)_{\alpha,\beta}:= \int_{\mathbb R} \varphi_\alpha^x(x) \partial_x \varphi_\beta^x (x ) \dd x. &
\end{align}	
\end{subequations}

\section{Failure of standard grad-div stabilizations for acoustics}
\label{sec:standardstab}

\subsection{SUPG stabilization}
\label{sec:supgstandard}

The stabilized variational form of the SUPG method by Hughes and collaborators \cite{brooks82,hughes86} can be written as 
\begin{equation}\label{eq:supg0}
\int \phi  ( \partial_t q +  J^x\partial_x q + J^x\partial_y q ) dx
+ \int  \alpha h (J^x\partial_x   \phi+ J^y\partial_y  \phi ) ( \partial_t q +  J^x\partial_x q + J^y\partial_y q )= \mathsf{B.C.s}
\end{equation}
with $\alpha$ a stabilization constant/matrix and $h$ a reference mesh size. 
The stability of the method can be shown by
 replacing the test function $ \phi$ by $q + \alpha h\,q_t$ for constant $\alpha$, (neglecting boundary condition terms, see also \cite{burman_supg}). The natural energy norm of SUPG given by
$$
E_{\text{SUPG}} := \int  \left\{ \frac{ q^Tq}{2} + (\alpha\,h)^2  (J^x\partial_x q + J^y\partial_y q)^T(J^x\partial_x q + J^y\partial_y q )\right\}dx\,
$$
such that
$$
 \partial_tE_{\text{SUPG}}
= -\int\alpha\, h ( \partial_t q +  J^x\partial_x q + J^x\partial_y q )^T( \partial_t q +  J^x\partial_x q + J^y\partial_y q ) dx \le 0.
$$
As said in Section~\ref{sec:intro}, the method naturally includes a grad-div structure in the stabilization. It thus seems to fit exactly the framework of stationarity preserving methods. However, it actually fails
to retain such a property. 
To show it we exploit the finite element/differences bridge presented in the previous sections,
and follow the spectral analysis of   \cite{barsukow17a}.

%

Recall the notation of difference operators in \eqref{eq:FEM1Dmatrices} and that $D_x = -D^x$ up to boundary conditions.
We can now write \eqref{eq:supg0} as  
\begin{equation}\label{eqs:standard_SUPG}
\begin{pmatrix}
 M_x \otimes M_y & 0 &  \alpha\,h D^x \otimes M_y \\ 0 & M_x \otimes M_y & \alpha\,h M_x \otimes D^y\\ \alpha\,h D^x \otimes M_y & \alpha\,h M_x \otimes D^y & M_x \otimes M_y  \end{pmatrix}
 \frac{\dd}{\dd t} \veccc{u}{v}{p}  
+\mathcal{E}_{SUPG}  \veccc{u}{v}{p}  =0  
\end{equation}
having introduced the evolution operator
\begin{equation}\label{eq:standardsupgfindiff}
\mathcal{E}_{\textrm{SUPG}}:= \begin{pmatrix}\alpha\,h D^x_x \otimes M_y &\alpha\,h D^x \otimes D_y & D_x \otimes M_y \\\alpha\,h D_x \otimes D^y &\alpha\,h M_x \otimes D^y_y & M_x \otimes D_y \\ D_x \otimes M_y & M_x \otimes D_y &\alpha\,h D^x_x \otimes M_y +\alpha\,h M_x \otimes D^y_y \end{pmatrix} .
\end{equation}

Let us split the matrices defined above into the central discretization and the stabilization (streamline upwinding) denoting by $\mathcal{A}_{\textrm{SUPG}}:=\mathcal{A}_C + \mathcal{A}_{\textrm{SU}}$ the matrix in front of the time derivative term, with
\begin{subequations}
\begin{align}
	\mathcal{A}_C &:= \begin{pmatrix}
		M_x \otimes M_y & 0 & 0\\ 0 & \!\!\!\!\!\!M_x \otimes M_y &0\\ 0 & 0 & \!\!\!\!\!\! M_x \otimes M_y  \end{pmatrix}
	,\,
	\mathcal{A}_{\textrm{SU}}  :=\alpha\,h \begin{pmatrix}
		0 & 0 &   \!\!\!\!\!\!D^x \otimes M_y \\ 0 & 0 &    \!\!\!\!\!\! M_x \otimes D^y\\  D^x \otimes M_y &    M_x \otimes D^y & 0  \end{pmatrix}, 
\end{align}
as well as for the matrix $\mathcal{E}_{\textrm{SUPG}}=\mathcal{E}_C + \mathcal{E}_{\textrm{SU}}$ with
\begin{align}
	\mathcal{E}_C &: =\begin{pmatrix}
		0 & 0 & D_x \otimes M_y \\ 0 & 0 & M_x \otimes D_y \\ D_x \otimes M_y & M_x \otimes D_y & 0 
	\end{pmatrix},\\
\mathcal{E}_{\textrm{SU}}&:=\alpha\,h
\begin{pmatrix}
	  D^x_x \otimes M_y &    D^x \otimes D_y & 0 \\  D_x \otimes D^y &   M_x \otimes D^y_y &0 \\ 0 &0 &    D^x_x \otimes M_y +   M_x \otimes D^y_y \end{pmatrix}.
\end{align}
This way, \eqref{eqs:standard_SUPG}  can be rewritten for $q = (u,v,p)^T$ as 
\begin{equation}\label{eq:standardsupgfindiff_compact}
	0=\mathcal{A}_{\textrm{SUPG}} \frac{\dd}{\dd t} {q} + \mathcal{E}_{\textrm{SUPG}} {q} =(\mathcal{A}_C + \mathcal{A}_{\textrm{SU}}) \frac{\dd}{\dd t} {q} + (\mathcal{E}_C+\mathcal{E}_{\textrm{SU}} ){q}.
\end{equation}
To be noted that $\mathcal{A}_C$ and $\mathcal{E}_{SU}$ are symmetric positive (semi-)definite, while $\mathcal{A}_{SU}$ and $\mathcal{E}_{C}$ are anti-symmetric matrices.

\end{subequations}

Consider now the lowest-order SUPG with $\mathbb Q^1$ basis functions. As there is only one degree of freedom per cell, SUPG can be immediately interpreted as a finite difference method, and its properties can be analyzed using techniques from \cite{barsukow17a}. Assuming for simplicity that $\Delta x = \Delta y = h$ and recalling that the quadrature formula and the Lagrangian basis functions are defined with the same Gauss-Lobatto points, we have
\begin{align}
 \mathbb F_{t_x}(M_x) &=1, &\mathbb F_{t_x}(D_x) = - \mathbb F_{t_x} (D^x)&= \frac{t_x^2 - 1}{2 t_x \,h },  &   \mathbb F_{t_x}(D^x_{x})= -\frac{(t_x - 1)^2}{t_x  \,h^2} .
\end{align}
Then, 
\begin{equation}\begin{split}
		\mathbb F_{t_x,t_y}(\mathcal E_{\textrm{SUPG}}) = \begin{pmatrix}
				- \alpha \frac{(t_x - 1)^2}{h  t_x  }  &
				- \alpha \frac{t_x^2 - 1}{2 t_x } \frac{t_y^2 - 1}{2 h t_y } & 
				\frac{t_x^2 - 1}{2h t_x }\\ 
				- \alpha \frac{t_x^2 - 1}{2 h t_x } \frac{t_y^2 - 1}{2 t_y } & 
				- \alpha \frac{(t_y - 1)^2}{h t_y } & 
				\frac{t_y^2 - 1}{2 h t_y } \\ 
				\frac{t_x^2 - 1}{2 h t_x }  &
				 \frac{t_y^2 - 1}{2 h t_y  } & 
				- \alpha \frac{(t_x - 1)^2}{h t_x }  - \alpha  \frac{(t_y - 1)^2}{h t_y } 
			\end{pmatrix}
			.
	\end{split}\end{equation}
As shown in \cite{barsukow17a}, all non-trivial stationary  states are given as the right kernel of $\mathcal E$, while its left kernel 
gives the corresponding involutions (if any). However note now that
\begin{align}\label{eq:det_Q1_SUPG}
 \det \mathbb F_{t_x,t_y}(\mathcal E_{\textrm{SUPG}}) =  
 \alpha (t_x-1)^2 (t_y-1)^2 (\ldots) \neq 0,
\end{align}
i.e., its kernel is trivial unless $t_x = 1$ or $t_y = 1$ (functions are constant in $x$ or $y$) or $\alpha = 0$ (no stabilization). This method does not have non-trivial stationary states, and for the same reason also no discrete involutions. Without proof we note that the same result holds for the $\mathbb Q^2$ case.

A more general characterization, still for the $\mathbb Q^1$-case, can be obtained observing that 
\begin{align}
	\begin{split}
 \det \mathbb F_{t_x,t_y}(\mathcal E_{\textrm{SUPG}}) &= \alpha^3 h^3 \Big(\mathbb F_{t_y}(D_{y}^y)\mathbb F_{t_x}( M_x) + \mathbb F_{t_x}(D_{x}^x)\mathbb F_{t_y}( M_y)\Big) 
 \times\\
 & \Big(\mathbb F_{t_x}(D_{x}^x)\mathbb F_{t_y}( D_{y}^y)\mathbb F_{t_x}( M_x) \mathbb F_{t_y}(M_y) - \mathbb F_{t_x}(D_x)^2 \mathbb F_{t_y}(D_y)^2 \Big) \\
 &- \alpha h \mathbb F_{t_x}(M_x)\mathbb F_{t_y}( M_y) \Big(\Big(\mathbb F_{t_x}(D_{x}^x) \mathbb F_{t_x}( M_x) + \mathbb F_{t_x}(D_x)^2\Big) \mathbb F_{t_y}(D_y)^2 
 +\\
 & \Big(\mathbb F_{t_y}(D_{y}^y)\mathbb F_{t_y}( M_y)+  \mathbb F_{t_y}(D_y)^2\Big)\mathbb F_{t_x}(D_x)^2\Big),
	\end{split}
\end{align}
which vanishes if
\begin{align}
 \mathbb F_{t_x}(D_{x}^x)\mathbb F_{t_x}( M_x) =- \mathbb F_{t_x}(D_x)^2  \quad \text{ and }\quad \mathbb F_{t_y}(D_{y}^y)\mathbb F_{t_y}( M_y) = -\mathbb F_{t_y}(D_y)^2,
\end{align}
i.e.,
\begin{align}
 D_{x}^x M_x =- D_x^2  \quad \text{ and }\quad D_{y}^yM_y =- D_y^2.
\end{align}
Note that for $\mathbb Q^1$ FEM discussed here, composition of difference operators is commutative. For general FEM, we have the following result.

\begin{proposition}\label{th:supgfail}
 Define the two operators
 \begin{align}
  \del_x \mathrm{DIV} \vec v &:= -(D_{x}^x \otimes M_y) u - (D^x \otimes D_y) v ,\qquad 
  \mathrm{DIV} \vec v := (D_x \otimes M_y) u + (M_x \otimes D_y) v .
 \end{align}
 Then the following two statements are equivalent:
 \begin{enumerate}
  \item 
  \begin{align}
  D_{x}^x = D^x  M_x^{-1}  D_x \label{eq:massmatrixsecondderivcorrespondence}
 \end{align}
 \item For all $u,v$ such that $\mathrm{DIV} \vec v \equiv 0$, $\del_x \mathrm{DIV} \vec v=0$ holds.
 
 \end{enumerate}
\end{proposition}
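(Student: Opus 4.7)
The plan is to prove the two implications separately, with the forward direction being a direct tensor-product manipulation and the reverse direction requiring a well-chosen family of test functions.

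For the direction $(1)\Rightarrow(2)$, I would substitute $D_x^x = D^x M_x^{-1} D_x$ and use the standard tensor-composition identity $(A C)\otimes(B D) = (A\otimes B)(C\otimes D)$ (which is a special case of Proposition~\ref{thm:highordercompositiontensor}) to factor out a common operator. Explicitly,
\begin{align*}
 D_x^x \otimes M_y &= (D^x M_x^{-1} D_x) \otimes M_y = (D^x M_x^{-1} \otimes \id)(D_x \otimes M_y),\\
 D^x \otimes D_y  &= (D^x M_x^{-1} M_x) \otimes (\id\, D_y) = (D^x M_x^{-1} \otimes \id)(M_x \otimes D_y),
\end{align*}
so that $\del_x \mathrm{DIV}\,\vec v = -(D^x M_x^{-1}\otimes \id)\,\mathrm{DIV}\,\vec v$. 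The conclusion is immediate.

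For the direction $(2)\Rightarrow(1)$, the difficulty is that hypothesis (2) is an implication restricted to the kernel of the divergence, yet we must extract an unconditional operator identity $D_x^x = D^x M_x^{-1} D_x$. The strategy is to use tensor-product test functions together with the invertibility of the mass matrices (which is where the asymmetry between $M_x, M_y$ and $D_x, D_y$ enters). For an arbitrary 1D vector $\phi$, set
\begin{equation*}
 \chi := -M_x^{-1} D_x \phi, \qquad \psi := M_y^{-1} D_y \eta,
\end{equation*}
where $\eta$ is any 1D vector with $D_y \eta \ne 0$ (such $\eta$ exists because $D_y\ne 0$), and define $u := \phi\otimes\psi$, $v := \chi\otimes\eta$. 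By construction $(D_x\phi)\otimes(M_y\psi) = (D_x\phi)\otimes(D_y\eta) = -(M_x\chi)\otimes(D_y\eta)$, so $\mathrm{DIV}\,\vec v = 0$. Applying hypothesis (2) and unwinding the tensor products yields
\begin{equation*}
 \bigl(D_x^x\phi - D^x M_x^{-1} D_x \phi\bigr) \otimes (D_y\eta) = 0,
\end{equation*}
and since $D_y\eta\ne 0$ this forces $D_x^x \phi = D^x M_x^{-1} D_x \phi$. Because $\phi$ was arbitrary, the operator identity (1) follows.

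The main obstacle is the reverse direction: one must realize that the implication on $\ker(\mathrm{DIV})$, although a priori very weak, becomes strong once one is allowed to vary $\phi$ freely while absorbing the compatibility constraint into the auxiliary vectors $\chi$ and $\psi$. The invertibility of $M_x$ and $M_y$ is what makes this absorption possible; without it, the construction would be obstructed by range conditions on $D_x$ and $D_y$. A minor bookkeeping point is to check that $D_y$ is not the zero operator (otherwise the conclusion would be vacuous in the $y$-direction), but this is immediate from the definition of $D_y$ as the weak derivative on a nontrivial FE space.
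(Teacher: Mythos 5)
Your proposal is correct. The forward direction is essentially the paper's argument: factor out $D^x M_x^{-1}\otimes \id_y$ so that $\del_x\mathrm{DIV}\,\vec v = -(D^x M_x^{-1}\otimes\id_y)\,\mathrm{DIV}\,\vec v$. The reverse direction, however, takes a genuinely different route. The paper adds and subtracts $(D^x M_x^{-1}\otimes\id_y)\,\mathrm{DIV}\,\vec v$ (which vanishes by hypothesis), reduces the statement to $\big((D^x M_x^{-1}D_x - D_x^x)\otimes M_y\big)u=0$ for all divergence-free pairs, and then concludes by asserting that within $\ker(\mathrm{DIV})$ the component $u$ "can be considered unconstrained". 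You instead build an explicit two-parameter family of divergence-free tensor pairs, $u=\phi\otimes M_y^{-1}D_y\eta$, $v=-(M_x^{-1}D_x\phi)\otimes\eta$, with $\phi$ arbitrary and $D_y\eta\neq 0$, and read off $(D_x^x\phi - D^x M_x^{-1}D_x\phi)\otimes(D_y\eta)=0$, whence the operator identity since $a\otimes b=0$ with $b\neq 0$ forces $a=0$. What each approach buys: the paper's cancellation is shorter, but its final step is informal — not every $u$ actually admits a partner $v$ with $\mathrm{DIV}\,\vec v=0$ (there is a range/compatibility condition in the $y$-direction; e.g.\ with periodic BCs, a $u$ constant in $y$ with $D_x$-image nonzero has no divergence-free completion), so "unconstrained" must be read as "a sufficiently rich admissible set". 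Your construction supplies exactly such a rich set by absorbing the constraint into $\psi=M_y^{-1}D_y\eta$ and $\chi=-M_x^{-1}D_x\phi$, making the reverse implication fully rigorous at the cost of a slightly longer setup; your remarks on the invertibility of the mass matrices and on $D_y\neq 0$ are the right hypotheses to flag.
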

\begin{proof}
Assume first \eqref{eq:massmatrixsecondderivcorrespondence} and $\mathrm{DIV} \vec v = 0$:
 \begin{align*}
  \del_x \mathrm{DIV} \vec v &= -(D_{x}^x \otimes M_y) u - (D^x \otimes D_y) v \\
   &= -(\underbrace{D^x  M_x^{-1}  D_x}_{D_{x}^x} \otimes M_y) u - (D^x  \underbrace{M_x^{-1}  M_x}_{\id} \otimes D_y) v \\
   &= -(D^x \otimes M_y)  (M_x^{-1} \otimes M_y^{-1})  (D_x \otimes M_y) u - (D^x \otimes M_y)  (M_x^{-1}  \otimes M_y^{-1})  (M_x \otimes D_y) v \\
   &= -(D^x  M_x^{-1} \otimes \id_y)  \Big ( (D_x \otimes M_y) u +  (M_x \otimes D_y) v\Big) = 0.
 \end{align*} 
 Conversely, 
 \begin{align}
  0 = \del_x \mathrm{DIV} \vec v &= - (D_{x}^x \otimes M_y) u - (D^x \otimes D_y) v \\
   &= -(D_x^x \otimes M_y) u - (D^x \otimes D_y) v + (D^x  M_x^{-1} \otimes \id_y)  \underbrace{\Big ( (D_x \otimes M_y) u +  (M_x \otimes D_y) v\Big)}_{=\mathrm{DIV} \vec{v}=0}\\
   &= -(D_{x}^x \otimes M_y) u - (D^x \otimes D_y) v+ (D^x  M_x^{-1}  D_x \otimes M_y) u +  (D^x \otimes D_y) v\\
   &= -(D_{x}^x \otimes M_y) u + (D^x  M_x^{-1}  D_x \otimes M_y) u . \label{eq:resultdiv}
 \end{align}
 The set of $(u,v)$ that satisfy $\textrm{DIV}(u,v)=0$ is very large, and e.g. $u$ can be considered unconstrained. As \eqref{eq:resultdiv} shall be true for all those $u$ one concludes $D_{x}^x = D^x  M_x^{-1}  D_x$.
\end{proof}

\begin{proposition}
 The stabilization terms of standard SUPG do not vanish when the Galerkin approximation of the divergence vanishes, because for $\mathbb Q^k$ FEM \eqref{eq:massmatrixsecondderivcorrespondence} is never true.
\end{proposition}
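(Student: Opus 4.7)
The plan is to argue via the Fourier-symbol formalism of Section~\ref{sec:finitedifference} and exhibit a stencil-width mismatch between the two operators. First, I would note that in the GLL--SEM setting adopted throughout the paper, exact Gauss--Lobatto mass lumping makes $M_x$ diagonal, so $\mathbb{F}_{t_x}(M_x)$ and $\mathbb{F}_{t_x}(M_x)^{-1}$ are constant $K\times K$ matrices in $t_x$. The operators $D_x$, $D^x$ and $D_x^x$ each couple a cell only with itself and with its two immediate neighbours through the shared endpoint DOF, so their characteristic polynomials are matrix-valued Laurent polynomials of degree at most one in $t_x$.

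By Proposition~\ref{thm:charpolcomposition} the symbol of $D^x M_x^{-1} D_x$ factors as $\mathbb{F}_{t_x}(D^x)\,\mathbb{F}_{t_x}(M_x)^{-1}\,\mathbb{F}_{t_x}(D_x)$, which is a Laurent polynomial of degree exactly two in $t_x$, whereas $\mathbb{F}_{t_x}(D_x^x)$ has degree at most one. Hence the identity~\eqref{eq:massmatrixsecondderivcorrespondence} can hold only if the extremal $t_x^{\pm 2}$ block-coefficients of the product vanish. Focusing on the $t_x^{+2}$ coefficient $[\mathbb{F}(D^x)]_{+1}\,[\mathbb{F}(M_x)]_0^{-1}\,[\mathbb{F}(D_x)]_{+1}$, a support argument on the Lagrange basis shows that both outer blocks are rank one: only the shared endpoint DOF of cell $i+1$ has support overlapping cell $i$, so each block has a single nonzero column. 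Exploiting the skew-symmetric relation $D^x = -D_x$ (modulo boundary conditions), the product reduces to a scalar multiple of a rank-one matrix whose leading $(0,0)$-entry equals $-\lambda_0^{-1}(w_0 L_K'(0))^2$, where $\lambda_0>0$ is the boundary lumped-mass weight, $w_0>0$ the boundary GLL weight and $L_K$ the Lagrange interpolant at the right endpoint of the reference cell $[0,1]$; the GLL quadrature collapse $\int_0^1 L_0(\xi)L_K'(\xi)\,\dd\xi = w_0 L_K'(0)$ provides this reduction since it is exact for polynomials of degree up to $2K-1$.

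The main obstacle is verifying that $L_K'(0)\neq 0$ uniformly in $K\ge 1$, so that the $(0,0)$-entry of the $t_x^{+2}$ coefficient is indeed nonzero. This is handled by the explicit factorisation $L_K(\xi) = \xi \prod_{q=1}^{K-1}(\xi-\xi_q)/(1-\xi_q)$, which yields $L_K'(0) = (-1)^{K-1}\prod_{q=1}^{K-1}\xi_q/(1-\xi_q) \neq 0$ since all interior GLL nodes lie strictly in $(0,1)$. Thus the $t_x^{+2}$ coefficient of $\mathbb{F}(D^x M_x^{-1} D_x)$ is a nonzero rank-one matrix, contradicting the degree-one structure of $\mathbb{F}(D_x^x)$, and the identity fails for every $K$. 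As a sanity check, the $\mathbb{Q}^1$ scalar symbols given just before the proposition yield $\mathbb{F}(D_x^x - D^x M_x^{-1} D_x) = (t_x-1)^4/(4 t_x^2 h^2)\neq 0$, in agreement with the general argument.
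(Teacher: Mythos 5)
Your proof is correct and is essentially the paper's own argument recast in Fourier-symbol language: the nonzero $t_x^{+2}$ block entry you isolate (the coupling of the two cell-interface DOFs two cells apart through the shared endpoint basis function, evaluated via the Gauss--Lobatto quadrature collapse) is exactly the matrix entry $(i-1,0;\,i,K)$ that the paper shows is zero for $D_x^x$ but nonzero for $D^xM_x^{-1}D_x$. The only cosmetic difference is that you verify $L_K'(0)\neq 0$ by the explicit product formula for the endpoint Lagrange polynomial, whereas the paper uses the simple-zero (multiplicity) argument for $\varphi_{i,s}'(x_{i,r})\neq 0$; both are equivalent and sound.
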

\begin{proof}
 We show the proof for the Gauss-Lobatto basis functions with Gauss-Lobatto quadrature that we will use in the numerical section. In this configuration the mass matrix is diagonal and all diagonal terms are different from zero.
 
 Now, we want to show that at least one element of $D^xM_x^{-1}D_x$ is different from the one of $D^x_x$. We consider the entry $(i-1,0;i,K)$ for any $i$ and we show that this entry is 0 in $D^x_x$ but not in the other matrix.
 \begin{subequations}
\begin{align}
	(D^x(M_x)^{-1}D_x)_{i-1,0; i,K} :=&\sum_{j\in \mathbb Z, r \in [0,K-1]}\int \varphi_{i-1,0}' \varphi_{j,r}  \dd x  \frac{1}{(M_x)_{j,r;j,r}}\int \varphi_{j,r} \varphi_{i,K}'  \dd x \\
	=&\frac{1}{(M_x)_{i,0;i,0}}\int \varphi_{i-1,0}' \varphi_{i,0}  \dd x\int \varphi_{i,0} \varphi_{i,K}' \dd x \label{eq:mult_diff} \\
	=&\frac{1}{(M_x)_{i,0;i,0}}\underbrace{\int \varphi_{i-1,0}' \varphi_{i-1,K}  dx}_{\neq 0} \underbrace{ \int \varphi_{i,0} \varphi_{i,K}' dx}_{\neq 0} \neq 0, \label{eq:mult_diff_neq}\\
	(D^x_x)_{i-1,0; i,K} : =& \int \varphi_{i-1,0}'\varphi_{i,K}' \dd x =0. \label{eq:diff_term_zero}
\end{align}
\end{subequations}
In \eqref{eq:mult_diff}, we have used the fact the only basis function that has support both on the support of $\varphi_{i-1,0}$ and $\varphi_{i,K-1}$ is $\varphi_{i,0}=\varphi_{i-1,K}$. Then, explicitly using the quadrature formula, we observe that $\int_{E_i^x} \varphi_{i,s}' \varphi_{i,r} \dd x = \dx w_{r}  \varphi_{i,s}'(x_{i,r})$, with $ w_r = 1/\dx\int_{E_i^x} \varphi_{i,r}$ being the $r$-th quadrature weight of the Gauss--Lobatto formula. Now, $\varphi_{i,s}'(x_{i,r})\neq 0$ for $r\neq s$ otherwise $x_{i,r}$ would have been both a zero of $\varphi_{i,s}$ and a local extremum. In this case, this zero of $\varphi_{i,s}$ would have multiplicity higher than one, but, by definition, it has multiplicity one. Hence, it must be different from 0.
On the other hand, in \eqref{eq:diff_term_zero} $\varphi_{i-1,0}'$ has support only in $E^x_{i-2}$ and $E^x_{i-1}$ and $\varphi_{i,K}$ has support only in $E^x_i$ and $E^x_{i+1}$, so the integral is zero. This concludes the proof.
\end{proof}

This Proposition does not allow to conclude that SUPG fails to be stationarity preserving because it might have some other non-trivial discrete stationary states, which are not governed by the Galerkin approximation $\mathrm{DIV} \vec v$ of the divergence. However, at least for $\mathbb Q^1$ (Equation \eqref{eq:det_Q1_SUPG}) and $\mathbb Q^2$ (without proof) SUPG does not possess nontrivial discrete stationary states.
The approach proposed later in the paper  allows to side-step the limitations highlighted here without  imposing the constraints of Theorem   \ref{th:supgfail}.

\subsection{grad-div Orthogonal Subscale Stabilization (OSS)}
\label{sec:ossstandard}


The Orthogonal Subscale Stabilization (OSS) is a stabilization technique introduced originally for Stokes equations \cite{CODINA1997373} and then extended for other problems, including convection--diffusion--reaction problems \cite{CODINA20001579,OSSCodinaBadia}. For hyperbolic equations, it has been studied in  \cite{michel2021spectral,michel2022spectral}, in particular its fully discrete Fourier  stability
when coupled with explicit time integration methods. 
The OSS stabilization technique  allows to use any dissipative operator by introducing  a   penalization term   composed
by the variational approximation of the dissipative operator  minus the same quantity evaluated using an $L^2({\Omega})$ projection
of the appropriate operator. For a Laplacian stabilization, for example, this gives a term of the form $\int_{{\Omega}} \varphi (\nabla q -w) =0$
with $w$ the projection of the gradient on the global approximation space.

For the acoustic system, we aim at constructing a grad-div based operator. We thus propose to study the  following stabilized variational form  
\begin{equation}\label{eq:OSS1}
\begin{split}
\int  \varphi  ( \partial_t u + \partial_x p ) dx  +& \int \alpha \, h \partial_x \varphi(\nabla \cdot \vec{u} - w^{\nabla \cdot \vec{u}} ) dx =0\\
\int  \varphi  ( \partial_t v + \partial_y p ) dx  +& \int \alpha \, h  \partial_y \varphi(\nabla \cdot \vec{u} - w^{\nabla \cdot \vec{u}} ) dx =0\\
\int  \varphi  ( \partial_t p + \partial_x u+ \partial_y v ) dx  +& \int \alpha \, h  \partial_x \varphi   (\partial_x p - w^p_x )dx  +\int \alpha \, h  \partial_y \varphi  (\partial_y p - w^p_y ) dx  =0\\
\end{split}
\end{equation}
with the projections $w^{\nabla \cdot \vec{u}},\, w^p_x$ and $w^p_y$ defined by
\begin{equation}\label{eq:OSS_projections}
	\begin{cases}
		\int  \varphi \left( \nabla\cdot \vec{u} - w^{\nabla\cdot \vec{u} } \right)dx=0, &\forall \varphi \in V_h,\\
		\int  \varphi \left( \partial_x p - w^p_x \right)dx=0, &\forall \varphi \in V_h,\\
		\int  \varphi \left( \partial_y p - w^p_y \right)dx=0, &\forall \varphi \in V_h.
	\end{cases}
\end{equation}
The stability of the method can be shown classically by replacing $\varphi$ by the velocities and pressure in the main system, summing up the results and removing 
from it the expression obtained by testing projections with  $\alpha\,h   (w^{\nabla\cdot \vec{u}},\;w^p_x ,\;w^p_y )^T$.

After some algebra, one shows that the energy stability  of the scheme is characterized by (neglecting boundary conditions, see also \cite{OSSCodinaBadia,michel2021spectral,michel2022spectral}):
$$
 \partial_t\int \frac{q^Tq}{2} dx 
= -\int\alpha\, h (   J^x\partial_x q + J^x\partial_y q - \tilde w)^T(J^x\partial_x q + J^x\partial_y q - \tilde w ) dx \le 0
$$
The above stabilized formulation seems a good candidate for being stationary preserving, as it involves the approximation of the proper differential terms, namely the grad-div  Laplacian for the velocity equations. Unfortunately, as for SUPG, a standard discretization of the operators involved fails to be stationarity preserving.
To show this, we proceed as done in the previous subsection and consider the semi-discrete version of the scheme.  We first consider the 
 projection which can be written as 
\begin{equation}
	\begin{cases}
		w^{\nabla \cdot \vec{u}} = (M_x \otimes M_y)^{-1}  ((D_x \otimes M_y)u + (M_x\otimes D_y) v),\\
		w^{p}_x = (M_x \otimes M_y)^{-1}  (D_x \otimes M_y)p,\\
		w^{p}_y  = (M_x \otimes M_y)^{-1} (M_x\otimes D_y) p,
	\end{cases}
\end{equation}
Then, inserting these definitions into the stabilization terms  we can show the following  for the horizontal velocity 
	\begin{equation}
	\begin{split}
	s^u = &		\alpha h \left[ (D^x_x \otimes M_y) u +(D^x \otimes D_y) v - (D^x\otimes M_y) (M_x \otimes M_y)^{-1}  ((D_x \otimes M_y)u + (M_x\otimes D_y) v) \right] \\
	=& 	\alpha h \left[(D^x_x \otimes M_y) u +(D^x \otimes D_y) v - (D^xM_x^{-1}D_x\otimes M_y) u- (D^x\otimes D_y) v\right] \\
	=&	\alpha h \left[ (D^x_x \otimes M_y) u  - (D^xM_x^{-1}D_x\otimes M_y) u\right]  = ((D^x_x-D^xM_x^{-1}D_x)\otimes M_y) u ,
		\end{split} 
	\end{equation}
which provides a coupled matrix representation of the stabilization term.
Introducing the matrices $Z_x:={D}^x_x-D^xM_x^{-1} D_x$ and $Z_y:=D^y_y-D^yM_y^{-1}D_y$, the  OSS stabilization terms can
be written in semi-discrete form as 
\begin{subequations}\label{eqs:OSS_stabs}
%
	\begin{align}
	s^u = &	\alpha hM_y  \otimes  Z_x u, \label{eq:OSS_u}\\
		s^v = &	\alpha h  M_x\otimes Z_y v , \label{eq:OSS_v}\\
		s^p=&  	\alpha h  (M_y \otimes  Z_x + M_x\otimes Z_y  ) p .\label{eq:OSS_p}
	\end{align}
%
The stabilization matrix is 
	\begin{align}
		\mathcal{E}_{\textrm{OSS}} := \alpha h \begin{pmatrix}
			Z_x \otimes M_y &0 &0 \\
			0 & M_x\otimes Z_y &0\\
			0&0&Z_x \otimes M_y + M_x\otimes Z_y
		\end{pmatrix} 
	\end{align}
\end{subequations}
and the OSS formulation can be succinctly written as
\begin{equation}\label{eq:OSS_with_matrix}
	\mathcal{A} \frac{\dd}{\dd t} \vec{q} + \mathcal{E} \vec{q}=0,\qquad  \text{with } \mathcal{A}=\mathcal{A}_C,\,\mathcal{E} = \mathcal{E}_C + \mathcal{E}_{\textrm{OSS}} .
\end{equation} 
Similarly to SUPG, we observe that 
\begin{equation}\begin{split}
		\mathbb F_{t_x,t_y}(\mathcal E) = \begin{pmatrix}
				\alpha h F_{t_x}(Z_x)&
				0 & 
				\frac{t_x^2 - 1}{2h t_x }\\ 
				0 & 
				\alpha h F_{t_y}(Z_y)& 
				\frac{t_y^2 - 1}{2 h t_y } \\ 
				\frac{t_x^2 - 1}{2 h t_x }  &
				\frac{t_y^2 - 1}{2 h t_y  } & 
				F_{t_x}(Z_x) + F_{t_y}(Z_y)
			\end{pmatrix}
			.
	\end{split}\end{equation}
As $F_{t_x}(Z_x)= -\frac{(t_x-1)^2}{t_xh^2}+\frac{(t_x^2-1)^2}{4t_x^2h^2}$ factors out a $(t_x-1)^2$ term and $F_{t_y}(Z_y)$ factors out a $(t_y-1)^2$ term, 
\begin{equation}
	\det \mathbb F_{t_x,t_y}(\mathcal E) = \alpha (t_x-1)^2(t_y-1)^2(\dots).
\end{equation}
This means that the kernel is only non-trivial ($\det\mathbb F_{t_x,t_y}(\mathcal E) = 0$) when $t_x=1$ or $t_y=1$ (functions constant in $x$ or $y$), i.e. the method is not stationarity preserving.


\newcommand{\factorTmp}{}
\newcommand{\factorTmpy}{}

\section{Global Flux quadrature and continuous Finite Elements} \label{sec:globalflux}

\subsection{Global flux quadrature in multi-D: the GFq divergence operator}\label{ssec:globalfluxgen}

The classical  Galerkin approximation of the divergence $\int \varphi (\partial_x u_h +\partial_y u_h) \dd x \dd y$   gives 

\begin{equation}\label{eq:div-sem}
\mathrm{DIV} \vec{v} =  D_x\otimes M_y u + M_x\otimes D_y v   \,.
\end{equation}
with the discrete operators defined in~\eqref{eq:FEM1Dmatrices}.
To generalize the 1D Global Flux idea to multiple dimensions, we use the symmetric  approximation  \eqref{eq:GF2Didea}, introducing the new notion of divergence
\begin{equation}\label{eq:div_UpV}
\partial_x u + \partial_y v \equiv \partial_{xy}(U+V),
\end{equation}
where 
\begin{align}
U  :=  U_0 + \int_{y_0}^y u\dd y \Longleftrightarrow \partial_yU =u, \qquad
V := V_0 + \int_{x_0}^x v \dd x \Longleftrightarrow \partial_xV =v.
\end{align}

As in one dimension, we construct discrete approximations of $U_h$ and $V_h$ 
in the same polynomial space of $u_h$ and $v_h$. 
To achieve this, we provide a line-by-line definition of $U_h$ and $V_h$ whose nodal values can be constructed as 
\begin{equation}\label{eq:bigU_V}
U  = \id_x \otimes I_y u\;,\quad   V = I_x \otimes \id_y v,
\end{equation}
with integration  operators defined for 1D FEM as 
\begin{equation}\label{eq:definition_integrator}
	\begin{split}
	&(I_x)_{i,s;k,p}:= \int_{x_{i,0}}^{x_{i,s}} \varphi_{k,p}^x (x ) \dd x\,\quad \text{for }s=1,\dots,K \quad\text{ and }\\
	&(I_y)_{i,s;k,p}:= \int_{y_{i,0}}^{y_{i,s}} \varphi_{k,p}^y (y ) \dd y\,\quad \text{for }s=1,\dots,K.
	\end{split}
\end{equation}
The modified weak divergence $\int \varphi \partial_{xy}(U_h+V_h) \dd x$ 
then reads
\begin{equation}\label{eq:div-GF}
	\begin{split}
\mathrm{DIV} \vec{v} &= 
D_x \otimes D_y (U + V) =  D_x \otimes D_yI_y u + D_xI_x\otimes D_y v = D_x \otimes D_y \left( \id_x\otimes I_y u + I_x \otimes \id_y v \right)   .
	\end{split}
\end{equation}
As in 1D, compared to \eqref{eq:div-sem} the latter formula essentially involves modifications of the mass matrices: they are replaced by the operators $D_xI_x$ and $D_yI_y$.   The GFq divergence operator \eqref{eq:div-GF} obtained with this modification
has a clear characterization of its kernel.

\begin{proposition}[Physically relevant part of the kernel of the GFq divergence]\label{th:div_kernel}
The  global flux quadrature divergence operator \eqref{eq:div-GF} vanishes identically for 
	\begin{align}
		U_{i,k;j,s}+V_{i,k;j,s}=f(i,k)+g(j,s). \label{eq:discreteUplusV}
	\end{align}
\end{proposition}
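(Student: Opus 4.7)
The plan is to exploit the tensor-product structure of the GFq divergence operator together with a partition-of-unity identity that makes the row sums of the discrete differentiation matrices vanish. Writing $(U+V)_{i,k;j,s} = f(i,k)+g(j,s)$ and linearizing, the action of $D_x \otimes D_y$ on $U+V$ splits as
\begin{equation*}
\bigl((D_x \otimes D_y)(f \oplus g)\bigr)_{\alpha;\beta} = \sum_{\alpha'} (D_x)_{\alpha,\alpha'} f_{\alpha'} \cdot \Bigl(\sum_{\beta'} (D_y)_{\beta,\beta'}\Bigr) + \Bigl(\sum_{\alpha'} (D_x)_{\alpha,\alpha'}\Bigr) \cdot \sum_{\beta'} (D_y)_{\beta,\beta'} g_{\beta'},
\end{equation*}
where I use $\alpha=iK+k$ and $\beta=jK+s$ as in Section~\ref{sec:finitedifference}. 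The whole proof thus reduces to showing that the row sums $\sum_{\alpha'} (D_x)_{\alpha,\alpha'}$ and $\sum_{\beta'} (D_y)_{\beta,\beta'}$ both vanish.

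Next, I would invoke the partition-of-unity property of the Lagrangian basis: since the nodal basis functions satisfy $\sum_{\alpha'} \varphi^x_{\alpha'}(x) \equiv 1$ on $\Omega^x_{\Delta x}$, their derivative sums to zero pointwise, so
\begin{equation*}
\sum_{\alpha'} (D_x)_{\alpha,\alpha'} = \sum_{\alpha'} \int \varphi^x_\alpha\, \partial_x \varphi^x_{\alpha'}\, \mathrm{d}x = \int \varphi^x_\alpha\, \partial_x\Bigl(\sum_{\alpha'} \varphi^x_{\alpha'}\Bigr)\, \mathrm{d}x = 0,
\end{equation*}
and symmetrically for $D_y$. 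Combining this with the displayed decomposition above immediately yields $(D_x \otimes D_y)(f \oplus g) = 0$ for every $f,g$, which is the claim.

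The only real subtlety lies in the treatment of the boundary. The partition-of-unity identity used above holds globally under periodic boundary conditions (the setting adopted for the discrete analysis in the paper); otherwise, row sums of $D_x$ involving basis functions supported near $\partial\Omega^x$ pick up boundary contributions from integration by parts, and one would need to reinterpret the statement modulo boundary terms, or restrict $\alpha$ to interior indices. I would therefore state explicitly at the start of the proof that we work with periodic BCs (or, equivalently, on the torus), which is consistent with the rest of the section. Under that assumption, the argument is completely elementary and the main step is simply the partition-of-unity computation; no spectral or Fourier machinery is needed here, although the same conclusion could alternatively be read off by noting that $\mathbb F_{t_x}(D_x)|_{t_x=1} = 0$ because $\partial_x$ annihilates constants, and likewise for $D_y$.
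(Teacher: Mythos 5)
Your proof is correct and is essentially the paper's own argument: the vanishing row sums of $D_x$ and $D_y$ that you establish via partition of unity are exactly the facts $D_x g=0$ and $D_y f=0$ invoked in the paper's one-line proof, with the details written out entrywise. Your closing caveat about periodic boundary conditions is unnecessary, since no integration by parts is performed --- $\sum_{\alpha'}\varphi^x_{\alpha'}\equiv 1$ pointwise on the domain, so $\partial_x\bigl(\sum_{\alpha'}\varphi^x_{\alpha'}\bigr)\equiv 0$ and the row sums vanish regardless of the boundary treatment.
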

\begin{proof}
	By construction $D_x\otimes D_y (f+g)=0$ since $D_y f=0$ and  $D_xg=0$ which immediately yields the result.
\end{proof}

A neat way of writing the above property is obtained by using the local assembly of \eqref{eq:div-GF},
which reads using a FEM notation
$$
[\mathrm{DIV} \vec{v}]_{\alpha;\beta} =\sum_{E\ni (\alpha;\beta) }  [  D_x^E \otimes D_y^E  (\id_x^E \otimes I_y^E  \, u^E) ]_{\alpha;\beta} + 
\sum_{E\ni (\alpha;\beta) }     [D_x^E \otimes D_y^E   (I_x^E \otimes \id_y^E  \, v^E)]_{\alpha;\beta} \,.  
$$
where the superscript $^E$ denotes the local entries of operators and arrays inside the element $E$.
On the  element $E=E_{ij}$, consider now the local arrays
$$
[u_{0}^E]_{i,s;j,p}:= u_{i,0;j,p}\;\forall s=0,\dots,K\,, \quad [v_0^E]_{i,s;j,p}:= v_{i,s;j,0}\;\forall p=0,\dots,K\,.
$$
Define now the elemental array of integrated divergences on each element $E=E_{ij}$
\begin{equation}\label{eq:div-residual}
\begin{split}
\Phi^E :=   & (\id_x^E\otimes I_y^E)   (u^E -u_0^E)    + (I_x^E\otimes \id_y^E)   (v^E -v^E_0) , \\
\Phi^E_{i,s;j,p} = &\int_{y_{j,0}}^{y_{j,p}}( u_h(x_{i,s},y) - u_h(x_{i,0},y))\dd y +  \int_{x_{i,0}}^{x_{i,s}}( v_h(x,y_{j,p}) - v_h(x,y_{j,0}))\dd x .
\end{split}
\end{equation}
 Using the fact that $D_x^E \otimes D_y^Eu_0^E=D_x^E \otimes D_y^Ev_0^E=0$, we can readily see that 
\begin{equation}\label{eq:div-phi}
[\mathrm{DIV} \vec{v}]_{\alpha;\beta} =\sum_{E\ni (\alpha;\beta)} [(D_x^E \otimes D_y^E)\Phi^E]_{\alpha;\beta}.
\end{equation}

\begin{proposition}[GFq  divergence and vanishing subcell integrals]\label{th:div_kernel-phi}
The  global flux quadrature divergence operator \eqref{eq:div-GF} vanishes identically whenever  $\forall E_{ij}$
and $\forall  \,s,p\in E_{ij}$ the integrated divergence on the subcell $[x_{i,0},x_{i,s}] \times[y_{j,0},y_{j,p}]$ vanishes: 
\begin{equation*}
\begin{split}
\Phi^E_{i,s;j,p} = 0\; 
 \forall s,p  \text{ and } \,\forall E_{i,j} \Rightarrow \mathrm{DIV} \vec{v} =0.
\end{split}
\end{equation*}
\end{proposition}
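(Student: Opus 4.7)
The proposition follows almost immediately from the local assembly identity~\eqref{eq:div-phi}, so the plan is quite short. My plan is to treat the implication as a direct corollary of that identity and spend the proof checking that the decomposition is indeed well defined.

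First, I would recall the elemental rewriting \eqref{eq:div-phi}, which expresses the global GFq divergence at any node $(\alpha;\beta)$ as a sum over the elements $E_{ij}$ containing that node, of the local operator $D_x^E \otimes D_y^E$ applied to the subcell integrated divergences $\Phi^E_{i,s;j,p}$. The derivation of this identity (carried out just above the statement) used the key observation that $D_x^E \otimes D_y^E$ annihilates both $u_0^E$ and $v_0^E$, since these arrays are constant in $x$ and $y$ respectively, allowing us to replace $(\id_x^E\otimes I_y^E)u^E+(I_x^E\otimes \id_y^E)v^E$ by $\Phi^E$ at no cost.

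Second, I would simply invoke the hypothesis: if $\Phi^E_{i,s;j,p}=0$ for every element $E=E_{ij}$ and every pair of local indices $s,p\in\{0,\dots,K\}$, then the entire local array $\Phi^E$ is the zero array on each element. Consequently $(D_x^E \otimes D_y^E)\Phi^E = 0$ elementwise, and summing over the (at most four) elements that share the node $(\alpha;\beta)$ yields $[\mathrm{DIV}\vec v]_{\alpha;\beta}=0$. Since this holds for every node, $\mathrm{DIV}\vec v \equiv 0$.

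The only subtlety worth flagging is to make sure the boundary entries of $\Phi^E$ are covered: by construction of \eqref{eq:div-residual} the entries with $s=0$ or $p=0$ are automatically zero (integration over a zero-length interval), so the hypothesis really only constrains the $s,p\in\{1,\dots,K\}$ entries, as reflected in the definitions of $I_x,I_y$ in \eqref{eq:definition_integrator}. There is no genuine obstacle: the result is essentially a restatement of \eqref{eq:div-phi}, and the proof amounts to pointing out that a linear operator applied to the zero vector returns zero. The real content lies in the earlier derivation of \eqref{eq:div-phi}, not in this corollary.
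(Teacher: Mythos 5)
Your proof is correct and follows exactly the route the paper intends: the paper's proof is the one-line remark that the result ``follows directly the previous computations,'' i.e.\ from the identity \eqref{eq:div-phi}, which is precisely what you spell out (hypothesis gives $\Phi^E=0$ on every element, so each local term $(D_x^E\otimes D_y^E)\Phi^E$ vanishes and the assembled sum is zero). Your added remark that the $s=0$ or $p=0$ entries of $\Phi^E$ vanish trivially is a fine clarification but introduces nothing beyond the paper's argument.
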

The proof follows directly the previous computations.

The last proposition shows two important properties:
\begin{enumerate}
\item {\it  the approach introduced allows to  define, at steady state, as many linearly independent zero divergences 
as the number of nodes in the mesh;}
\item  {\it the  GFq approach  allows to  naturally pass from nodal  to face integrated quantities}.
Indeed,   $U$ and $V$ contain   integrated values of the  velocities in the directions
normal to the faces of the element sub-cells.  These are natural objects to express the integrals 
\begin{align}
 \iint (\del_x u + \del_y v) \dd x \dd y = \int [u]_x \dd y + \int [v]_y \dd x = \left[ \int u \dd y \right]_x + \left[ \int v \dd x \right ]_y .
\end{align}
This establishes a loose link to mimetic schemes  using  face averages of normal components.
\end{enumerate}

\subsection{Construction of the integrators in multi-D} \label{sec:supggfintegrators}

Finally, we give a general recipe how to construct integrators $I_x, I_y$ that allow to discretize
\begin{align}
	\partial_y U =u, &
	&\partial_x V=v\label{eq:UV_def}
\end{align}
in such a way that all the discrete spatial derivatives are compact differences, once the method is expressed in terms of $u$ and $v$.

%
%
%

To ensure a local nature of the method, a condition on $I_x, I_y$ is that $\mathcal E$, and in particular $D_x I_x, D^x_x I_x$, etc. have to be compact difference operators. For $\mathbb Q^1$ FEM this is ensured by choosing 
\begin{align}
I_x = \Delta x \frac{t_x+1}{2(t_x-1)} \label{eq:integratorsimpleP1}
\end{align}
because the characteristic polynomial of any finite difference formula that discretizes a derivative can always be divided by $t_x-1$ (see \cite{barsukow18low}). Observe the identities
\begin{align}
 D_x I_x &= \frac{t_x^2 + 2 t_x +1}{4t_x }, &
  D_{x}^x I_x =- D_x,
\end{align}
which follow from
\begin{align}
 \mathbb F_{t_x}(D_x I_x) &= \frac{(t_x+1)^2}{4 t_x}, & \mathbb F_{t_x}(D_{x}^x I_x) &= -\frac{(t_x-1)(t_x+1)}{2t_x \Delta x} =- \mathbb F_{t_x}(D_x).
\end{align}

The idea in the context of FEM is to take $u,v \in V_h^K $ and integrate them in $y$ and $x$, respectively, inside each cell $E_{ij}$. 
This would give piecewise $U(x,y)=\int^y u(x,s)\dd s \in \mathbb Q^{K}(E_i^x)\times \mathbb Q^{K+1}(E^y_{j})$ and $V(x,y) =\int^x v (s,y)\dd s\in \mathbb Q^{K+1}(E^x_i) \times \mathbb Q^{K}(E^y_{j})$. We then choose to project $U,V$ pointwise back onto $\mathbb Q^K(E_{ij})$. In particular, we write
%
\begin{subequations}
\begin{align}
	u(x,y)  &= \sum_{(i,j) \in \mathbb Z^2}\sum_{z,w = 1}^K u_{i,z;j,w}\phi^x_{i,z}(x) \phi^y_{j,w}(y)  = \sum_{(i,j) \in \mathbb Z^2}\sum_{z,w = 0}^K u_{i,z;j,w}\phi_{i,z}(x)\Big |_{E^x_{i}} \phi_{j,w}(y)\Big |_{E^y_{j}}, \\
	v(x,y)  &= \sum_{(i,j) \in \mathbb Z^2}\sum_{z,w = 1}^K  v_{i,z;j,w}\phi^x_{i,z}(x) \phi^y_{j,w}(y)= \sum_{(i,j) \in \mathbb Z^2}\sum_{z,w = 0}^K  v_{i,z;j,w}\phi^x_{i,z}(x)\Big |_{E^x_{i}} \phi^y_{j,w}(y)\Big |_{E^y_{j}},
\end{align}
\end{subequations}
see Figure~\ref{fig:FEM_DOFs}.
Integrating $u$ (or $v$) with respect to $y$ (or $x$), we get in the cell $E_{i,j}$:
\begin{subequations}
\begin{align}
	\begin{split}\label{eq:def_U_hat}
	\hat U(x,y)  := &\int_{y_\text{0}}^y u(x, y') \dd y' = \hat U(x, y_{j}) + \int_{y_{j}}^y u(x, y') \dd y'=\\
	& \hat U(x, y_{j}) 
	+ \sum_{z',w'=0}^{K} u_{i,z';j,w'} \int_{y_{j}}^y  \varphi^x_{i,z'}(x) \varphi^y_{j,w'}(y')   dy',
	\end{split}\\
	\begin{split}
	\hat V(x,y)  :=& \int_{x_0}^x v(x',y) \dd x' = \hat V(x_{i}, y) + \int_{x_{i}}^x v(x',y) \dd x'=\\
	& \hat V(x_{i}, y) + \sum_{z',w'=0}^{K} v_{i,z';j,w'} \int_{x_{i}}^x  \varphi^x_{i,z'}(x') \varphi^y_{j,w'}(y)   dx' .
	\end{split}
\end{align}
\end{subequations}

We will show below that the step-functions $\hat U(x, y_{j}), \hat V(x_{i}, y)$ are of no importance for the final form of the method, which will allow us to eventually drop them.
We also want to highlight that the restriction of $u$ on cell $E_{ij}$
\begin{align}
 U \Big |_{E_{ij}} = \sum_{z',w'=0}^{K} u_{i,z';j,w'} \int_{y_{j}}^y  \varphi^x_{i,z'}(x) \varphi^y_{j,w'}(y')   dy'
\end{align}
includes the degrees of freedom associated to $z'=0$ and $w'=0$, which belong also to the previous cell. Recall their definitions:
\begin{subequations}
\begin{align}
 u_{i,0;j,w'} &:= u_{i-1,K;j,w'}, \qquad \forall w' = 1, \ldots, K, \\
 u_{i,z';j,0} &:= u_{i,z';j-1,K}, \qquad \forall w' = 1, \ldots, K, \\
 u_{i,0;j,0} &:= u_{i-1,K;j-1,K}.
\end{align}
\end{subequations}

\begin{proposition}[Differentiation of integrals is independent on the starting value]
	Consider $U(x,y) : = \sum_{z,w} \varphi^x_{i,z}(x) \varphi^y_{j,w}(y) U_{i,z;j}$ in the cell $E_{ij}$ with some $U_{i,z;j}$ only depending on the cell $j$, not on $w$, the DoF in $y$. Then, $\partial_y U(x,y) = 0$ in the cell $E_{ij}$.
\end{proposition}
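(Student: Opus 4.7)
The plan is to exploit the partition-of-unity property of the Lagrangian basis in the $y$-direction. Since the coefficient $U_{i,z;j}$ is independent of the $y$-index $w$, it factors out of the inner sum, and the remaining sum $\sum_{w=0}^K \varphi^y_{j,w}(y)$ collapses to $1$ on $E^y_j$, making $U(x,y)$ independent of $y$ on the cell.

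More concretely, I would proceed in three short steps. First, write
\begin{equation*}
U(x,y)\big|_{E_{ij}} = \sum_{z=0}^K \varphi^x_{i,z}(x)\, U_{i,z;j}\, \Bigl(\sum_{w=0}^K \varphi^y_{j,w}(y)\Bigr),
\end{equation*}
which is allowed since $U_{i,z;j}$ does not depend on $w$. Second, invoke the fact that the Gauss--Lobatto Lagrangian basis $\{\varphi^y_{j,w}\}_{w=0}^K$ reproduces constants exactly on $E^y_j$: because the nodal interpolant of the constant function $1$ is itself (its nodal values are all $1$, and interpolation of polynomials of degree $\leq K$ is exact), we have the partition of unity
\begin{equation*}
\sum_{w=0}^K \varphi^y_{j,w}(y) = 1 \quad \text{for all } y \in E^y_j .
\end{equation*}
Third, conclude that $U(x,y)\big|_{E_{ij}} = \sum_{z=0}^K \varphi^x_{i,z}(x)\, U_{i,z;j}$ is independent of $y$, hence $\partial_y U(x,y) = 0$ on $E_{ij}$.

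There is essentially no obstacle, only a small bookkeeping issue: one must be careful that the sum over $w$ indeed runs over all $K+1$ local indices (including the endpoints $w=0$ and $w=K$ shared with neighbouring cells), because the partition-of-unity identity requires the full Lagrangian basis on $E^y_j$. Once that index range is fixed, the argument is a one-line application of partition of unity.
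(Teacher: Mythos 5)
Your proof is correct and follows essentially the same route as the paper: factor the $w$-independent coefficient out of the $y$-sum, invoke the partition-of-unity identity $\sum_{w=0}^K \varphi^y_{j,w}(y)\equiv 1$ on $E^y_j$ (including the shared endpoint DoFs), and conclude that $\partial_y U$ vanishes on $E_{ij}$. The only difference is that you spell out why the Lagrangian basis reproduces constants, which the paper simply asserts with an underbrace.
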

\begin{proof}
	Let us compute $\partial_y U(x,y)$ in the cell $E_{ij}$. To this end we need to include the degrees of freedom associated to $z,w=0$:
	\begin{align}
		\partial_y U(x,y) \Big|_{E_{ij}} 
		&= \partial_y \left(  \sum_{z,w=0}^{K} \varphi^x_{i,z}(x) \varphi^y_{j,w}(y) 
		U_{i,z;j} 
		\right)
		= \sum_{z=0}^{K} \varphi^x_{i,z}(x) 
		U_{i,z;j} 
		\partial_y \underbrace{\left(  \sum_{w=0}^{K} \varphi^y_{j,w}(y)\right)}_{\equiv 1} 
		= 0 . 
	\end{align}
	
\end{proof}

It this does not matter which constant we choose to define $U$ in each cell. 
As in the construction of the method only $\del_y U$ appears, the term $\hat U(x, y_{j})$ in \eqref{eq:def_U_hat} can be dropped straight away. We thus define
\begin{subequations}\label{eq:def_U_V}
\begin{align}
	U_{i,z;j,w} &:= \sum_{w'=0}^{K} u_{i,z;j,w'} \int_{y_{j}}^{y_{j,w}}   \varphi_{j,w'}(y')   dy', \\
	V_{i,z;j,w} &:= \sum_{z'=0}^{K} v_{i,z';j,w} \int_{x_{i}}^{x_{i,z}}  \varphi_{i,z'}(x')   dx', \qquad z,w = 1, \ldots, K,
\end{align}
\end{subequations}
and pass from $U$ and $V$ to $u$ and $v$ with the matrix multiplications
\begin{align}
 	U &= \id_x \otimes I_y u &  V &= I_x \otimes \id_y v
\end{align}
with the integrator $I_x$ defined by
\begin{align}
 (I_x\otimes \id_y v)_{i,z;j,w} = \sum_{z'=0}^{K} v_{i,z';j,w} \int_{x_{i}}^{x_{i,z}}  \varphi_{i,z'}(x')   dx'. \label{eq:defintegrator}
\end{align}
Notice that doing so, we are implicitly defining $U(x,y):= \sum_{\alpha,\beta} \varphi_{\alpha;\beta}(x,y) U_{\alpha;\beta} \in V_h^K$, while formally $\hat U$ in \eqref{eq:def_U_hat} was belonging to a different functional space with higher degree of polynomials in $y$ direction. This step is a projection onto the space $V_h$.
The two polynomials, nevertheless, coincide on the degrees of freedom, i.e.,
\begin{equation}
	\hat U(x_{\alpha},y_{\beta}) = U_{\alpha;\beta} = U(x_\alpha;y_\beta).
\end{equation}

%

\begin{example}
In the case of $\mathbb Q^1$ FEM ($K=1$) one finds
\begin{subequations}
\begin{align}
	(I_x v)_{i,1;j,w} 
        &=  v_{i,0;j,w} \int_{x_{i,0}}^{x_{i+1,0}}  \varphi_{i,0}(x')   dx' + v_{i,1;j,w} \int_{x_{i,0}}^{x_{i,1}}  \varphi_{i,1}(x')   dx' = \Delta x \frac{ v_{i,0;j,w}  + v_{i+1,0;j,w}}{2}
\end{align}
i.e.
\begin{align}
 \mathbb F_{t_x}(I_x) = \Delta x \frac{t_x+1}{2t_x} = \Delta x \left( \frac{t_x+1}{2(t_x-1)} - \frac{t_x+1}{2t_x(t_x-1)}  \right),
\end{align}
\end{subequations}
which is similar to, but not exactly, the factor $\frac{t_x+1}{2(t_x-1)}$ used in Equation \eqref{eq:integratorsimpleP1}.
\end{example}

\subsection{Nodal projection, nodal consistency, and super-convergence}\label{sec:well_prepared_ic}
We have now a new definition of the discrete divergence, and we have shown that the desired physical
equilibria are part of  its kernel. 
We now set out to study the following two questions:
\begin{itemize}
 \item Given an element of the space of discrete divergence-free
solutions of Propositions \ref{th:div_kernel} and \ref{th:div_kernel-phi},  what is   its formal consistency with respect to exact analytical solutions?
 \item Given a divergence free vector field, how to devise a projection  onto the space of discrete divergence free solutions?
\end{itemize} 
The first question is  covered by the following

\begin{proposition}[GFq divergence: consistency estimate]\label{th:div-consistency}Consider a $C^{P}(\Omega)$ solenoidal vector field $(u_e,v_e)$ 
with $P\ge 1$, such that the solenoidal condition $\partial_x u_e(x,y)=-\partial_y v_e(x,y)$ is true in every point and in particular at all collocation points.
Given an ODE $U'(t)=F(U,t)$, let the integrators
$$
U_p-U_0 = (I_xF)_p\;, \quad U_p-U_0 =  (I_yF)_p\;, 
$$
be exact when $F$ is a polynomial of degree  $M$.  Then,  for $P\ge M$,  the global flux divergence \eqref{eq:div-GF} admits exact
discrete kernels    verifying Propositions \ref{th:div_kernel} or \ref{th:div_kernel-phi},  and such that $u=u_e$ and $v=v_e$ on $\partial\Omega_h$  
which also verify the consistency estimates $u=u_e + \mathcal{O}(h^{M})$, and  $v=v_e + \mathcal{O}(h^{M})$.
\end{proposition}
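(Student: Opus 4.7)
My plan is to exploit the fact that, at the continuous level, the structure expressed by Proposition \ref{th:div_kernel} is automatically verified by the exact primitives of $(u_e,v_e)$. Setting
\begin{align*}
U_e(x,y) := \int_{y_0}^y u_e(x,s)\,\dd s, \qquad V_e(x,y) := \int_{x_0}^x v_e(s,y)\,\dd s,
\end{align*}
one has $\partial_x\partial_y(U_e+V_e) = \partial_x u_e + \partial_y v_e = 0$, so $U_e+V_e = f(x) + g(y)$ is separable (alternatively, writing $u_e=\partial_y\psi_e$, $v_e=-\partial_x\psi_e$ for a stream function $\psi_e$ exhibits $U_e+V_e$ explicitly as $\psi_e(x_0,y)-\psi_e(x,y_0)$). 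The strategy is to seed the discrete approximation with the exact nodal values of $U_e, V_e$, then \emph{recover} $(u,v)$ by inverting the integrator relations $U = (\id_x\otimes I_y)u$ and $V = (I_x\otimes\id_y)v$, and finally transfer the polynomial exactness of the integrator into the claimed nodal consistency.

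More precisely, I would prescribe $u_{\alpha;\beta}=u_e(x_\alpha,y_\beta)$ and $v_{\alpha;\beta}=v_e(x_\alpha,y_\beta)$ on the boundary nodes of $\partial\Omega_h$, and set $U_{\alpha;\beta}:=U_e(x_\alpha,y_\beta)$, $V_{\alpha;\beta}:=V_e(x_\alpha,y_\beta)$ at every node. Along each vertical line $\alpha$ the equation $U_{\alpha;\beta} = (I_y u)_{\alpha;\beta}$ is, cell by cell, exactly the implicit Runge--Kutta formulation (a Lobatto IIIA-type table for the Gauss--Lobatto case) of the scalar ODE $U'=u$; with the datum $u_{\alpha;0}$ fixed on the boundary, the local stage system is invertible and uniquely determines $u_{\alpha;\beta}$ at the remaining nodes of the cell, allowing us to march to the opposite boundary. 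The analogous procedure in $x$ fixes $v$. By construction the resulting nodal array satisfies $U_{\alpha;\beta}+V_{\alpha;\beta} = f_\alpha + g_\beta$, so Proposition \ref{th:div_kernel} yields $\mathrm{DIV}\vec v \equiv 0$ exactly, and the equivalent subcell-integrated statement of Proposition \ref{th:div_kernel-phi} follows via \eqref{eq:div-residual}.

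For the consistency estimate, I would invoke the Runge--Kutta/collocation reading of the integrator recalled in Section \ref{sec:keyidea}: feeding the exact trajectory $y\mapsto U_e(x_\alpha,y)$ into the integrator as the target for the discrete ODE $U'=u$ incurs a local truncation error of size $\mathcal O(h^{M+1})$ on each cell (since the quadrature underlying $I_y$ is exact on polynomials of degree $M$ and $u_e\in C^P$ with $P\ge M$); accumulating over $\mathcal O(1/h)$ cells produces the global bound $u_{\alpha;\beta} = u_e(x_\alpha,y_\beta) + \mathcal O(h^M)$, and the same reasoning for $I_x$ yields the corresponding estimate on $v$. Boundary compatibility is built in since by construction $u=u_e$ and $v=v_e$ at every node of $\partial\Omega_h$.

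The main obstacle I anticipate is twofold. First, one must verify that the cell-wise inversion of the integrator is well-posed and stable, which follows from the invertibility of the interior block of the Lobatto IIIA tableau, but must be stated in a way that is uniform in $h$. Second, the $I_y$ defined in \eqref{eq:definition_integrator} is the one arising from Lagrangian bases on Gauss--Lobatto points rather than an abstract RK method, so rigorously translating its polynomial exactness of degree $M$ into an $\mathcal O(h^M)$ global ODE error requires careful bookkeeping of the equivalence between collocation and implicit RK methods. Both points are technical rather than structural, and the remainder of the argument reduces to a careful accounting of orders.
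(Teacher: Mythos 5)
Your construction differs from the paper's and, as written, contains a step that fails. The integrators $I_x,I_y$ of \eqref{eq:definition_integrator} are per-cell operators: $(\id_x\otimes I_y\, u)_{i,z;j,w}=\int_{y_{j,0}}^{y_{j,w}}u_h(x_{i,z},y)\,\dd y$, which vanishes at each cell's bottom edge. If you prescribe $U_{\alpha;\beta}:=U_e(x_\alpha,y_\beta)$ with the \emph{global} primitive $U_e=\int_{y_0}^{y}u_e$, then in every cell beyond the first the equations $U_{\alpha;\beta}=(\id_x\otimes I_y\,u)_{\alpha;\beta}$ equate an $\mathcal O(1)$ quantity with an $\mathcal O(h)$ local integral, so the recovered $u$ is not $u_e+\mathcal O(h^{M})$ but of size $\mathcal O(1/h)$. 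The natural repair is to impose instead the per-cell increments $U_e(x_\alpha,y_{j,w})-U_e(x_\alpha,y_{j,0})$, i.e. exactly the line-by-line quadrature projection of Definition~\ref{def:quad-proj}; but then your justification of kernel membership also breaks: with $u_e=\partial_y\psi_e$, $v_e=-\partial_x\psi_e$, the resulting nodal array $U+V$ equals $\psi_e(x_{i,0},y_{j,w})-\psi_e(x_{i,z},y_{j,0})$, which is \emph{not} of the separable form \eqref{eq:discreteUplusV}, so Proposition~\ref{th:div_kernel} does not apply directly; one must pass through the subcell-flux characterization of Proposition~\ref{th:div_kernel-phi} (vanishing $\Phi^E$), which is how the paper argues for this projection in Proposition~\ref{prop:ll-rr-poj}.

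The second gap concerns the consistency estimate. Your error argument is the standard forward-ODE one (local truncation $\mathcal O(h^{M+1})$ accumulated over $\mathcal O(h^{-1})$ cells), but your construction is an \emph{inverse} problem — recovering the nodal values $u$ from prescribed integrals — so you additionally need uniform-in-$h$ stability of the cell-wise inversion and control of how the carried value $u_{\alpha;j,0}$ propagates error from cell to cell; you flag this but do not resolve it. The paper's proof avoids both difficulties by a different construction: it rewrites $\mathrm{DIV}\,\vec v$ by adding the interpolant of the (identically zero) exact divergence, obtaining \eqref{eq:proj0}, and then defines $u$ and $v$ by \emph{forward} marching of the ODEs $\dd u/\dd x=-\partial_y v_e$ and $\dd v/\dd y=-\partial_x u_e$ with the tables $I_x,I_y$, starting from boundary data; kernel membership is then the algebraic identity \eqref{eq:proj1} inserted into \eqref{eq:proj0}, and the nodal estimate follows directly from the exactness of the integrator on degree-$M$ polynomials combined with the solenoidal identity $u_e(x_\alpha,y_\beta)=u_e(x_0,y_\beta)-\int_{x_0}^{x_\alpha}\partial_y v_e(x,y_\beta)\,\dd x$, with no inversion or stability argument required.
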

\begin{proof}Since $\partial_xu_e +\partial_y v_e=0$ is true pointwise and in particular  at all collocation points $(x_\alpha,y_\beta)$, we can
 remove from the expression of $\mathrm{DIV}$ operators applied to pointwise values of the derivatives
   $\partial_xu_e(x_\alpha,y_\beta)$ and $\partial_yv_e(x_\alpha,y_\beta)$. In particular note that
   $$
   \partial_xu_e(x_\alpha,y_\beta)+\partial_yv_e(x_\alpha,y_\beta) =0\Rightarrow (\partial_xu_e)_h + (\partial_yv_e)_h =0
   $$
 We thus  start from  \eqref{eq:div-GF} and add or remove interpolated values of the above zero divergence tested against all $\varphi^x\varphi^y \in V_h$
 \begin{equation}
 	\begin{split}
 \mathrm{DIV} \vec{v}=& (D_x \otimes D_y I_y)( u_h  + (I_x\otimes \id_y)((\partial_xu_e)_h + (\partial_yv_e)_h)) + \\&
  (D_x I_x \otimes D_y) (  v_h +(\id_x \otimes I_y)((\partial_xu_e)_h+(\partial_yv_e)_h)) .
 \end{split}
 \end{equation}
Simple manipulations show that the previous expression is equivalent to 
\begin{equation}\label{eq:proj0}
\begin{split}
	\mathrm{DIV} \vec{v}=& (D_x \otimes D_y I_y)( u_h  +(I_x\otimes \id_y)(\partial_yv_e)_h)+\\
	& 
	(D_x I_x\otimes D_y )(  v_h +(\id_x \otimes I_y)(\partial_xu_e)_h ) +(D_xI_x \otimes D_y I_y)\cancelto{0}{((\partial_x u_e)_h+(\partial_yv_e)_h)}.
\end{split}
\end{equation}
Since $u=u_e$ and $v=v_e$ on  $\partial\Omega$,  we consider discrete states  defined by marching along
gridlines using the ODE integrator defined by $I_x$ and $I_y$ by integrating for every $x_\alpha$ and every $y_\beta$ the ODEs implicitly appearing in \eqref{eq:proj0}, i.e.,
$$
\dfrac{du(x,y_\beta)}{dx} = - \partial_yv_e(x,y_\beta)\;,\quad 
\dfrac{dv(x_\alpha,y)}{dy} = - \partial_xu_e(x_\alpha,y).
$$
By construction the resulting nodal values  $u(x_\alpha,y_\beta)$ and $v(x_\alpha,y_\beta)$ verify
\begin{equation}\label{eq:proj1}
u_h+ (I_x\otimes \id_y)(\partial_y v_e)_h = \textrm{c}_x(y)\;,\quad
v_h+ (\id_x \otimes I_y)(\partial_xu_e)_h = \textrm{c}_y(x) 
\end{equation}
and by virtue of \eqref{eq:proj0} are thus   exact discrete solutions of  $\mathrm{DIV} \vec{v}=0$. 
Moreover, if integration is started from the boundary points $(x_0,y_\beta)$ and $(x_\alpha,y_0)$,
the hypotheses on the exactness of integration tables  and on the regularity of $(u_e,v_e)$ lead to 
the  local nodal consistency estimates
\begin{equation*}
\begin{split}
u(x_\alpha,y_\beta) =  u_e(x_0,y_\beta) - \int_{x_0}^{x_s}\partial_y v_e(x,y_\beta)dx + \mathcal{O}(h^{M+1})=u_e(x_\alpha, y_\beta)+ \mathcal{O}(h^{M+1})\;, \\
v(x_\alpha,y_\beta) =  v_e(x_\alpha,y_0) - \int_{y_0}^{y_\beta}\partial_xu_e(x_\alpha, y)dy + \mathcal{O}(h^{M+1})=v_e(x_\alpha, y_\beta)+ \mathcal{O}(h^{M+1}).
\end{split}
\end{equation*}
The global consistency is obtained  classically by considering the space marching on the whole domain, on a number of cells of order $h^{-1}$ which  
leads to the sough $h^M$ estimate.
\end{proof}

Concerning the initialization, the proof of Theorem \ref{th:div_kernel-phi}, and in particular \eqref{eq:div-residual}, provides 
an idea how to  construct discrete projections  on the kernel of \eqref{eq:div-GF}. In particular,  we define hereafter the
following quadrature-based projection.

\begin{definition}[Line-by-line quadrature  projection]\label{def:quad-proj} Let $(u_e,v_e)$  be a smooth enough   vector field. 
Let $u(0,y_\beta) =  u_e(0,y_\beta)$ and $v(x_\alpha,0) =  v_e(x_\alpha,0)$   on the bottom and left of the domain $x=0$ and $y=0$. Given these values, we define recursively  over line/row elements  the  $\vec v$ fulfilling 
\begin{equation}\label{eq:proj-q}
\begin{split}
[I_y^{E^y_{j}}u^{E^y_{j}}(x_{i,s})]_p:=\int_{y_{j,0}}^{y_{j,p}} u_e(x_{i,s},y)dy\;,\quad  
[I_x^{E^x_{i}}v^{E^x_{i}}(y_{j,p})]_s:=\int_{x_{i,0}}^{x_{i,s}} v_e(x,y_{j,p})dx 
\end{split}
\end{equation}
with  $I_x^{E^x_{i}}$ and $I_y^{E^y_{j}}$ the local restriction of the integration tables, and with  local initial conditions on each
element $ u_h(x_s,y_{j,0})=u_h(x_s,y_{j-1,K})$ and  $ v_h(x_{i,0},y_p)=v_h(x_{i-1,K},y_{p})$.
\end{definition} 

We can immediately prove the following

\begin{proposition}[Line-by-line quadrature  projection of solenoidal data]\label{prop:ll-rr-poj}
Let  $(u_e,v_e)$  be a given smooth enough solenoidal field, 
 if the quadrature of the components of $(u_e,v_e)$ in \eqref{eq:proj-q} is of order  $M_q$ then
 the line by line/row by row quadrature  projection is equivalent to \eqref{eq:proj1}  within   $\max(h^{M_q},h^{M})$,
and it is in the kernel of \eqref{eq:div-GF} for  exact integration of the right hand sides in \eqref{eq:proj-q}.
Moreover,  the projected data has a pointwise consistency  w.r.t. $(u_e,v_e)$
 of order $\max(h^{M_q},h^{M})$.
\end{proposition}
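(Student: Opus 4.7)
The plan is to address the three claims of the proposition in the order they appear: first the kernel property (for exact right-hand side integration), then the pointwise consistency estimate, and finally the equivalence with \eqref{eq:proj1}.

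First, to prove that the projection lands in the kernel of \eqref{eq:div-GF} when the right-hand sides of \eqref{eq:proj-q} are integrated exactly, I would combine the cell-local definitions with the continuity $u_h(x_{i,s}, y_{j,0}) = u_h(x_{i,s}, y_{j-1,K})$ (and the analogous condition for $v_h$) to telescope the local integrator action along each $y$- and $x$-gridline. This yields the accumulated expressions
\begin{equation*}
U_{i,s;j,p} = U_{i,s;0,0} + \int_{y_0}^{y_{j,p}} u_e(x_{i,s}, y)\,dy, \qquad V_{i,s;j,p} = V_{0,0;j,p} + \int_{x_0}^{x_{i,s}} v_e(x, y_{j,p})\,dx.
\end{equation*}
Since $(u_e,v_e)$ is solenoidal a stream function $\Phi$ exists with $\partial_y\Phi = u_e$ and $\partial_x\Phi = -v_e$. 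Path independence of $\int d\Phi$ between $(x_0,y_0)$ and $(x_{i,s},y_{j,p})$ along the two coordinate broken paths then gives
\begin{equation*}
\int_{y_0}^{y_{j,p}} u_e(x_{i,s}, y)\,dy + \int_{x_0}^{x_{i,s}} v_e(x, y_{j,p})\,dx = \int_{y_0}^{y_{j,p}} u_e(x_0, y)\,dy + \int_{x_0}^{x_{i,s}} v_e(x, y_0)\,dx,
\end{equation*}
which is precisely a decomposition of the form $f(j,p) + g(i,s)$. Proposition~\ref{th:div_kernel} then immediately yields $\mathrm{DIV}\,\vec v = 0$.

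Second, for the pointwise consistency, I would compare the defining equation \eqref{eq:proj-q} with the action of the same integrator on the exact datum. By the order-of-exactness hypothesis used in Proposition~\ref{th:div-consistency}, one has $(I_y u_e)_{i,s;j,p} = \int_{y_{j,0}}^{y_{j,p}} u_e(x_{i,s},y)\,dy + \mathcal{O}(h^{M+1})$ locally, while the quadrature on the right-hand side replaces the exact integral by an approximation within $\mathcal{O}(h^{M_q+1})$. Subtracting yields $(I_y(u - u_e))_{i,s;j,p} = \mathcal{O}(\max(h^{M_q+1}, h^{M+1}))$ in each cell. Once the incoming value at $y_{j,0}$ is fixed through the continuity condition, the cell-local integrator $I_y^{E^y_j}$ is a nonsingular map between the $K$ remaining nodal values and the $K$ integrated increments, so this inverts to nodal errors of order $\mathcal{O}(\max(h^{M_q}, h^M))$. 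Propagating these errors across the $\mathcal{O}(h^{-1})$ cells of a gridline by the standard zero-stability argument for one-step ODE solvers preserves the same global order, and the analogous argument for $v$ closes the step. The equivalence with \eqref{eq:proj1} is then immediate: Proposition~\ref{th:div-consistency} produces nodal values consistent with $(u_e,v_e)$ to order $h^M$ via space marching, and the quadrature projection has just been shown consistent to order $\max(h^{M_q},h^M)$, so a triangle inequality closes this last point.

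The main technical obstacle will be the inversion step in the consistency proof: controlling uniformly in $h$ the stability of the cell-local map $u^{E^y_j} \mapsto I_y^{E^y_j} u^{E^y_j}$ (and its $x$-analogue), and then propagating the per-cell errors along a gridline without introducing a spurious amplification factor in the number of crossed cells. For Gauss--Lobatto collocation this is the classical Lobatto~IIIA setting where invertibility of the local integrator is a well-known fact, and zero-stability of the associated ODE scheme ensures that per-cell errors accumulate additively; writing this carefully with constants independent of $h$ is the delicate part of the argument.
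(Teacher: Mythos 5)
Your kernel argument is, in substance, the paper's: for exact right-hand sides the integrals of $u_h$ and $v_h$ entering the subcell quantities coincide with those of $(u_e,v_e)$, and solenoidality makes the exact combination vanish. The paper packages this through the subcell integrated divergences $\Phi^E$ of \eqref{eq:div-residual} and Proposition~\ref{th:div_kernel-phi} (a per-subcell Green identity), while you telescope to global integrals and invoke a stream function together with Proposition~\ref{th:div_kernel}; that is acceptable provided you note that your accumulated $U,V$ differ from the cell-local ones of \eqref{eq:def_U_V} only by per-cell constants in the integration direction, which the paper has already shown to be irrelevant for $D_x\otimes D_y$. Where you genuinely diverge is the order of the other two claims: the paper first shows, cell by cell, that the quadrature-projected data satisfies the marching relations \eqref{eq:proj1} up to $\mathcal{O}(h^{M_q+1})+\mathcal{O}(h^{M+1})$ (using $\partial_x u_e=-\partial_y v_e$ and the exactness hypothesis on the tables) and then inherits the pointwise consistency from Proposition~\ref{th:div-consistency}; you prove consistency directly and recover the equivalence with \eqref{eq:proj1} a posteriori by a triangle inequality. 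Both structures are legitimate.

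The step that would fail as written is the consistency bookkeeping. Inverting the cell-local table (entries of size $\mathcal{O}(h)$) against a residual of size $\mathcal{O}(h^{M_q+1}+h^{M+1})$ gives per-cell nodal errors $\mathcal{O}(h^{M_q}+h^{M})$; if an error of that size is added anew in every cell, then "per-cell errors accumulate additively", which is all zero-stability gives you, yields $\mathcal{O}(h^{M_q-1}+h^{M-1})$ after $\mathcal{O}(h^{-1})$ cells, one order short of the claim. To close the argument in your framework you must separate the propagated quantity (the end-node increment, whose per-cell error is $\mathcal{O}(h^{M_q+1}+h^{M+1})$, and for Gauss--Lobatto even better at end nodes, as recalled in the introduction) from the purely local internal-node errors produced by the inversion: only the former accumulates, and it accumulates to the claimed $\mathcal{O}(h^{M_q}+h^{M})$. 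Alternatively, do as the paper does and reduce per cell to the marching states \eqref{eq:proj1}, so that the only global accumulation argument needed is the one already carried out in Proposition~\ref{th:div-consistency}. With that repair your proposal is sound; without it the stated rate is not justified.
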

\begin{proof}
We prove the result for the $u$ component, the proof for the $v$ component is similar.
We can write that
\begin{equation*} 
\begin{split}
\int_{y_{j,0}}^{y_{j,p}} u_h(x_{i,s},y)dy =& \int_{y_{j,0}}^{y_{j,p}} u_e(x_{i,s},y)dy + \mathcal{O}(h^{M_q+1}) \\= &\int_{y_{j,0}}^{y_{j,p}} u_e(x_{i,0},y)dy -
\int_{y_{j,0}}^{y_{j,p}}\int_{x_{i,0}}^{x_{i,s}}\partial_yu_e(x,y)dx\,dy + \mathcal{O}(h^{M_q+1})\\
= &\int_{y_{j,0}}^{y_{j,p}} u_e(x_{i,0},y)dy -
\int_{y_{j,0}}^{y_{j,p}}  [I_x^{E_i^x}(\partial_y u_e)_h^{E_i^x} (y)]_s\,dy + \mathcal{O}(h^{M_q+1})+ \mathcal{O}(h^{M+1})
\end{split}
\end{equation*}
having used the hypotheses of the local initial conditions.  The latter is equivalent to  
$$
u_h(x_{i,s},y_{j,p}) + (I_x\otimes \id_y)(\partial_y v_e)_h = u_h(x_{i,0},y_{j,p}) + \mathcal{O}(h^{M_q+1})+ \mathcal{O}(h^{M+1})
$$
which shows that the equivalence with \eqref{eq:proj-q} within $\max(h^{M_q},h^{M})$.  
Using the solenoidal condition on $(u_e,v_e)$, one can also check now that by definition
\begin{equation*} 
\begin{split}
[\Phi^E]_{i,s;j,p} =& \int_{y_0}^{y_p}( u_h(x_s,y) - u_h(x_0,y))dy +  \int_{x_0}^{x_s}( v_h(x,y_p) - v_h(x,y_0))dx\\ =&
 \int_{y_0}^{y_p}( u_e(x_s,y) - u_e(x_0,y))dy +  \int_{x_0}^{x_s}( v_e(x,y_p) - v_e(x,y_0))dx +\mathcal{O}(h^{M_q+1})=\mathcal{O}(h^{M_q+1})
\end{split}
\end{equation*}
From  Proposition \ref{th:div_kernel-phi}  for exact integration   the projected data is in the kernel of \eqref{eq:div-GF}.
\end{proof}

The above directional initialization introduces some apparent dependence  on the initial integration point and direction of marching.
However,   the scheme  is in reality  symmetric with respect to the above choices.
This can be seen from   the following property.

\begin{proposition}[Reversibility of global flux quadrature SEM\label{prop:symmetric-GFq}] The  projection   operator $D_xI_x   $ obtained
with definitions  \eqref{eq:definition_integrator} is independent on the orientation of integration.
\end{proposition}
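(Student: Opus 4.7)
The plan is to make precise a ``reversed'' integrator $\tilde I_x$, to show that $(I_x - \tilde I_x) v$ is a globally constant nodal vector for every $v$, and then to invoke the fact that $D_x$ annihilates constants.

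First I would formalize the global assembly of \eqref{eq:definition_integrator}. On each cell $E_i^x$ the local matrix gives the increments $\int_{x_{i,0}}^{x_{i,s}} v_h\,\dd x$; these are assembled into a global operator by fixing a single free integration constant, say $C_L$ at the left boundary $x_0$, and propagating through the continuity identification $(I_x v)_{i,0} = (I_x v)_{i-1,K}$. This yields the global form
\begin{equation*}
(I_x v)_{i,s} = C_L + \int_{x_0}^{x_{i,s}} v_h(x)\,\dd x.
\end{equation*}
Reversing the orientation replaces the local increments by $\int_{x_{i,K}}^{x_{i,s}} v_h\,\dd x = -\int_{x_{i,s}}^{x_{i,K}} v_h\,\dd x$, enforces continuity $(\tilde I_x v)_{i,K} = (\tilde I_x v)_{i+1,0}$, and fixes a free constant $C_R$ at $x_{N_x}$, giving
\begin{equation*}
(\tilde I_x v)_{i,s} = C_R - \int_{x_{i,s}}^{x_{N_x}} v_h(x)\,\dd x.
\end{equation*}

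Next I would simply subtract: for every node $(i,s)$,
\begin{equation*}
(I_x v)_{i,s} - (\tilde I_x v)_{i,s} = (C_L - C_R) + \int_{x_0}^{x_{N_x}} v_h(x)\,\dd x =: K_v,
\end{equation*}
which is independent of $(i,s)$. By the partition of unity $\sum_{\alpha} \varphi^x_\alpha(x) \equiv 1$, the finite element function associated with this constant nodal vector is the scalar constant $K_v$, whose weak derivative vanishes. Hence by the definition of $D_x$ in \eqref{eq:FEM1Dmatrices} one obtains $D_x (I_x - \tilde I_x) v = 0$ for every $v$, i.e.\ $D_x I_x = D_x \tilde I_x$.

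The main obstacle is notational rather than conceptual: \eqref{eq:definition_integrator} defines $I_x$ only locally, whereas orientation of integration is an inherently global notion which simply selects where the single free integration constant is fixed. Once the global assembly is pinned down, the statement reduces to the elementary observation that two primitives of the same function differ by a constant and that the weak derivative $D_x$ is blind to constants, up to boundary terms that cancel under periodic or otherwise compatible boundary conditions.
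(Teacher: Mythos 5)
There is a genuine gap, and it sits exactly at the step you describe as ``notational rather than conceptual''. The operator $I_x$ of \eqref{eq:definition_integrator} is a cell-local table: its row $(i,s)$ integrates only from the left endpoint $x_{i,0}$ of the element containing the node, so the integration restarts in every cell and no constant is propagated across elements (compare the paper's $\mathbb Q^1$ example, $\mathbb F_{t_x}(I_x)=\Delta x\,(t_x+1)/(2t_x)$, a compact stencil, whereas your ``global form'' $C_L+\int_{x_0}^{x_{i,s}}v_h$ is a non-compact accumulated sum). Likewise the reversed-orientation operator restarts at each element's right endpoint $x_{i,K}$. Hence the forward and reversed integrators do not differ by a single global constant: they differ by a \emph{cell-dependent} constant, namely the element integral $\int_{x_{i,0}}^{x_{i,K}}v_h\,\dd x$ on each $E^x_i$. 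The crux of the proposition is therefore that the derivative operator, applied element-wise in the assembly, annihilates element-wise constants (partition of unity inside each element, $\partial_x\sum_l\varphi_{i,l}\equiv 0$) — this is the paper's one-line argument, and it mirrors the earlier proposition on independence of the starting value. Your proof only invokes annihilation of one global constant, and the identification of the local table with a globally propagated primitive, on which your reduction rests, is false as a description of \eqref{eq:definition_integrator}; it is also not harmless, since a cell-dependent constant is \emph{not} in the kernel of the global matrix $D_x$ of \eqref{eq:FEM1Dmatrices} acting on nodal vectors — it is only killed in the element-assembled sense.

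So as written you prove reversibility of a different (globally accumulated) integrator, and the statement about the actual $D_xI_x$ of the proposition remains unproven; to close the gap you would need precisely the missing lemma that per-element constants are annihilated by the locally assembled derivative, at which point the global anchoring constants $C_L$, $C_R$ become an unnecessary detour. A minor additional point: no appeal to periodic or ``compatible'' boundary conditions is needed to discard constants, since $D_x$ involves $\partial_x$ of the trial function and the derivative of a constant vanishes pointwise.
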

\begin{proof}
The reversed table $\tilde{I}_{x}$ verifies
$$
(I_x)_{i,j;i,l}= \int_{x_{i,0}}^{x_{i,l}}\varphi_{i,l}(x)dx= \int_{x_{i,0}}^{x_{i,K}}\varphi_{i,l}(x)dx +\int_{x_{i,K}}^{x_{i,j}}\varphi_{i,l}(x)dx = \int_{x_{i,0}}^{x_{i,K}}\varphi_l(x)dx+(\tilde{I}_x)_{i,j;i,l},
$$
with $(\tilde{I}_x)_{i,j;i,l}:=-\int_{x_{i,j}}^{x_{i,K}}\varphi_{i,l}(x)dx$.
This implies $ I_x{S} = \overline{{S}} + \tilde{I}_x{S}  $, with $ \overline{{S}}$   containing constant entries given by the average of the source. 
As a consequence  $D_xI_x{S}=D_x\tilde{I}_x{S}$.
\end{proof}
The above property shows that locally both directions of integration provide the same global flux quadrature scheme at the end. 
In practice, it is the boundary conditions that will define the actual steady solution of the scheme.
The line by line/row by row quadrature  projection is in practice simple to implement but unless corrected with several sweeps
in different vertical/horizontal directions, or of some combinations of the projection in different directions,
it is affected by the accumulation of the error along the mesh, as any ODE integrator. To avoid this drawback we have 
considered the direct optimization based  projection defined below. 

\begin{definition}[Optimization based projection]\label{def:opt-proj} 
Consider  initial data obtained by nodally sampling a given solenoidal vector field: $(u_e(x_s,y_p),v_e(x_s,y_p))$. 
The optimization based projection consists in  looking for perturbed nodal data $(\tilde{u}(x_s,y_p),\, \tilde{v}(x_s,y_p))$ whose
error w.r.t. the initial sample data is minimized, under the constraint that the discrete divergence should vanish:
\begin{equation}
	(\tilde u, \tilde v) = \argmin_{u,v \in V_h\,:\, {\normalfont \text{DIV}  \vec{v}} =0} 
		||u-u_e||_2^2+ ||v-v_e||_2^2.
\end{equation}
\end{definition} 
The above definition requires solving a  linearly constraint optimization problem with a very simple quadratic functional to be minimized. 
This solution can be obtained e.g. using a \href{https://docs.scipy.org/doc/scipy/tutorial/optimize.html\#id36}{Trust-Region Constrained Algorithm}\footnote{an open source implementation can be found in \texttt{scipy.optimize}}. For the above method we cannot prove
any consistency estimate, but in practice  we obtain data with the nodal consistency of 
Theorem~\ref{th:div-consistency} and Proposition~\ref{prop:ll-rr-poj}  within 2 iterations of the algorithm.\\

When initializing the solution with a given  solenoidal vector field, we thus have three possibilities:
sampling  at collocation points;  line by line/row by row quadrature  projection;
optimization based projection. These will be  evaluated and compared in detail  in the results section.

\section{GFq based grad-div compatible stabilization}

A high-order approximation of the divergence is not enough, because the system \eqref{eq:acoustic} under consideration is hyperbolic and stabilization is required. It was shown in Section~\ref{sec:FDstructure_preserving} that appropriate stabilization must be used in order to preserve the stationary states. We integrate in this section the global Flux technique into the SUPG and OSS stabilizations.

\subsection{SUPG stabilization with GFq} \label{ssec:globalfluxsupg}


We  construct the GFq  variant of the SUPG method 
by evaluating the integrals 
\begin{equation}\label{eq:supg-gfq1}
\begin{split}
\int  \varphi_h  ( \partial_t u_h + \partial_x p_h ) dx  +& \int \alpha \, h  \partial_x \varphi  ( \partial_t p_h +\partial_{xy}(U_h+V_h)) dx =0\\
\int  \varphi_h  ( \partial_t v_h + \partial_y p_h ) dx  +& \int \alpha \, h  \partial_y \varphi  ( \partial_t p_h +\partial_{xy}(U_h+V_h) ) dx =0\\
\int  \varphi_h  ( \partial_t p_h + \partial_{xy}(U_h+V_h)  dx  +& \int \alpha \, h  \partial_x \varphi  (  \partial_t u_h + \partial_x p_h ) dx  + \int \alpha \, h  \partial_y \varphi_h  (  \partial_t v_h + \partial_y p_h ) dx  =0\\
\end{split}
\end{equation}
We then use the definitions \eqref{eq:def_U_V} of the nodal values of $U_h$ and $V_h$ and end up with  
the modified version of the SUPG scheme:
%
%
\begin{align}\label{eq:supg_GF}
(\mathcal{A}_C + \mathcal{A}_{\textrm{SU}})\dfrac{d\mathbf{q}}{dt} + \mathcal{E}_{\textrm{C-GFq}}\mathbf{q} + \mathcal{E}_{\textrm{SU-GFq}} \mathbf{q} =0
\end{align}
with $\mathcal{A}_C$ and  $\mathcal{A}_{\textrm{SU}}$ the same as in Section \ref{sec:supgstandard} and $\mathcal E_\textrm{SUPG-GFq} = \mathcal{E}_{\textrm{C-GFq}} + \mathcal{E}_{\textrm{SU-GFq}}$ with
\begin{subequations}\label{eq:evolutionmatrixsupggf_all}
\begin{align}
  \mathcal{E}_{\textrm{C-GFq}} = \begin{pmatrix}
	0 & 0 & D_x \otimes M_y \\ 0 & 0 & M_x \otimes D_y  \\ D_x \otimes (D_y I_y) & (D_xI_x) \otimes D_y &  0  \end{pmatrix}
\end{align}
and
\begin{align}
  \mathcal{E}_{\textrm{SU-GFq}} = \alpha\, h \begin{pmatrix}
   \factorTmp D^x_x \otimes (D_yI_y) &   \factorTmp (D^x_x I_x) \otimes D_y & 0\\    \factorTmpy D_x \otimes (D^y_y I_y) &   \factorTmpy (D_xI_x) \otimes D^y_y &0 \\ 0 & 0 &  \!\!\!\!\!\!\! \factorTmp D^x_x \otimes M_y  +   \factorTmpy M_x \otimes D^y_y  \end{pmatrix} \label{eq:evolutionmatrixsupggf}  
\end{align}
\end{subequations}

The form of the above operators allows to prove the following result.

\begin{proposition}[Equilibria of SUPG-GFq]\label{th:SUPG_equilibrium} Any state with constant pressure $p$
and velocities in  the kernel of the global flux divergence operator, as 
characterized by Proposition \ref{th:div_kernel} or equivalently Proposition \ref{th:div_kernel-phi}, is a steady equilibrium
of the GFq based  SUPG.
\end{proposition}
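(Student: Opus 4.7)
The plan is to exploit the explicit block structure of the evolution operator $\mathcal E_{\textrm{SUPG-GFq}} = \mathcal E_{\textrm{C-GFq}} + \mathcal E_{\textrm{SU-GFq}}$ in \eqref{eq:evolutionmatrixsupggf_all}. At a steady equilibrium the time derivative drops out, so it suffices to show that $\mathcal E_{\textrm{SUPG-GFq}}\, q = 0$ for $q=(u,v,p)^T$ with $p$ constant and $(u,v)$ in the kernel of the GFq divergence.

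I would start by disposing of all pressure-dependent contributions. The first two rows of $\mathcal E_{\textrm{C-GFq}}$ act on $q$ only through $D_x\otimes M_y\, p$ and $M_x\otimes D_y\, p$, while the third row of $\mathcal E_{\textrm{SU-GFq}}$ produces $\alpha h (D^x_x\otimes M_y + M_x\otimes D^y_y)p$. Since each of the operators $D_x$, $D_y$, $D^x_x$, $D^y_y$ annihilates any function that is constant in the corresponding direction, these three blocks vanish identically whenever $p\equiv p_0$.

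The velocity-dependent part is where Proposition~\ref{th:div_kernel} enters. The third row of $\mathcal E_{\textrm{C-GFq}}$ is, by construction, $\mathrm{DIV}\vec v$ as defined in \eqref{eq:div-GF}, so it is zero by hypothesis. For the first two rows of $\mathcal E_{\textrm{SU-GFq}}$, the key observation is that they can be rewritten in terms of the discrete potentials $U=(\mathrm{id}_x\otimes I_y)u$ and $V=(I_x\otimes \mathrm{id}_y)v$ introduced in \eqref{eq:bigU_V}. Indeed, using the tensor-product composition rules of Proposition~\ref{thm:highordercompositiontensor},
\begin{align*}
\alpha h\left[ D^x_x\otimes (D_yI_y)\,u + (D^x_x I_x)\otimes D_y\, v\right] &= \alpha h\,(D^x_x\otimes D_y)(U+V),\\
\alpha h\left[ D_x\otimes (D^y_y I_y)\,u + (D_xI_x)\otimes D^y_y\, v\right] &= \alpha h\,(D_x\otimes D^y_y)(U+V).
\end{align*}

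The final step is to invoke the kernel characterization of Proposition~\ref{th:div_kernel}: in the kernel of the GFq divergence, $U+V$ decomposes as $f(x)+g(y)$. Any operator of the form $A_x\otimes D_y$ or $D_x\otimes A_y$ annihilates such a split sum, because $D_y$ kills the $x$-dependent summand and $D_x$ the $y$-dependent one. Hence both rewritten SU blocks vanish, completing the proof. The only non-routine step is the factorization into $(U+V)$, but it follows directly from the tensor-product identity $(A\otimes B)(C\otimes D)=(AC)\otimes(BD)$ and the definitions of $U$, $V$; so I do not anticipate a real obstacle.
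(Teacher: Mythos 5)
Your proof is correct and takes essentially the same route as the paper: it factors the GFq divergence and the SU stabilization blocks through the tensor-product identity into $(D_x\otimes D_y)(U+V)$, $(D^x_x\otimes D_y)(U+V)$ and $(D_x\otimes D^y_y)(U+V)$, and then invokes the split $U+V=f+g$ from Proposition~\ref{th:div_kernel}, exactly as in the paper's argument (your explicit treatment of the constant-pressure blocks is just a spelled-out version of what the paper leaves implicit). One phrasing nitpick: the final annihilation is not valid for a generic $A_x\otimes D_y$ or $D_x\otimes A_y$ (e.g.\ $M_x\otimes D_y$ does not kill $g$); it works here because each of $D_x$, $D^x_x$, $D_y$, $D^y_y$ annihilates constants in its own direction.
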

\begin{proof}
	If \eqref{eq:discreteUplusV} is true, then both the divergence and the stabilization terms in the $u$ and $v$ equations vanish:
$$
	 D_x \otimes (D_y I_y) u + (D_x I_x) \otimes D_y v = (D_x \otimes D_y) ( \id_x \otimes I_y u +  I_x \otimes \id_y v ) = 
	 (D_x \otimes D_y) ( U+V)= 0
$$
	and also
	\begin{align*}
		\alpha \,h \factorTmp \Big ( D^x_x \otimes (D_y I_y) u +  (D^x_x I_x) \otimes D_y v\Big ) 
		&=\alpha \,h \factorTmp (D^x_x \otimes D_y )\Big ( \id_x \otimes I_y u +   I_x \otimes \id_y v\Big )  \\
		&= \alpha \,h \factorTmp D^x_x \otimes D_y ( U+V)= 0
	\end{align*}
	and similarly for the $v$ equation.
\end{proof}
Using the characteristic polynomial / Fourier representation of the scheme, as done in Section~\ref{sec:supgstandard}, one immediately confirms for the case of $\mathbb Q^1$ FEM that, independently on further choices
\begin{align}
 \det \mathbb F_{t_x,t_y}(\mathcal E) = 0.
\end{align}
and that the right kernel of $\mathbb F_{t_x,t_y}(\mathcal E) $ (related stationary states) is parallel to
\begin{align}\label{eq:right_kernel_SUPG}
 (-\mathbb F_{t_x}(I_x), \mathbb F_{t_y}(I_y), 0)^\text{T}.
\end{align}

The above property shows one of the key differences with respect to the scheme of Section~\ref{sec:supgstandard}:
the kernel of the stabilization includes that of the divergence, in particular it contains equilibria which have physical meaning,
and are characterized by Theorem~\ref{th:div_kernel} and Proposition~\ref{th:div_kernel-phi}. 
The kernel of the divergence, however, may also contain non-physical modes. This aspect is discussed in more detail in Section~\ref{sec:kernels}.

\subsection{GFq stabilization operators: OSS} \label{ssec:globalfluxoss}
Proceeding in a similar manner we construct a GFq version of the OSS stabilization, whose discrete equations are obtained from, $\forall \phi_h \in V^K_h,$ 
\begin{equation}\label{eq:OSS-gfq1}
\begin{split}
\int  \varphi_h  ( \partial_t u_h + \partial_x p_h ) dx  +& \int \alpha \, h \partial_x \varphi_h(\partial_{xy}(U_h+V_h) - w_h^{\nabla \cdot \vec{u}} ) dx =0,\\
\int  \varphi_h  ( \partial_t v_h + \partial_y p_h ) dx  +& \int \alpha \, h  \partial_y \varphi_h(\partial_{xy}(U_h+V_h) - w_h^{\nabla \cdot \vec{u}} ) dx =0,\\
\int  \varphi_h  ( \partial_t p_h + \partial_{xy}(U_h+V_h) ) dx  +& \int \alpha \, h  \partial_x \varphi_h   (\partial_x p_h - w_h^{p_x} )dx  + \int \alpha \, h  \partial_y \varphi_h  (\partial_y p - w^{p_y} ) dx  =0,  \\
\end{split}
\end{equation}
with the projections $w_h^{\nabla \cdot \vec{u}},\, w^p_x$ and $w^p_y$ defined by
\begin{equation}\label{eq:OSS_projections_gfq}
	\begin{cases}
		\int  \varphi_h \left(\partial_{xy}(U_h+V_h) - w_h^{\nabla\cdot \vec{u} } \right)dx=0,\\
		\int  \varphi_h \left( \partial_x p_h - w_h^{p_x} \right)dx=0,\\
		\int  \varphi_h \left( \partial_y p_h - w_h^{p_y} \right)dx=0,
	\end{cases}
\end{equation}
With the notation of Sections~\ref{sec:ossstandard} and~\ref{ssec:globalfluxsupg}, we can show that 
the evaluation of the above integrals leads to the following stabilization terms for the velocity equations (compare with the the OSS ones in~\eqref{eqs:OSS_stabs})
	\begin{equation}
		\label{eq:OSS_GF_uv}
		\begin{split}
			s^u = &	
			\alpha h \left\{ (Z_x\otimes D_yI_y) u +(Z_x I_x \otimes {D}_y) v \right\},\\[5pt]
			s^v =&\alpha h \left\{ (D_x \otimes Z_yI_y)u + (D_xI_x \otimes Z_y)v\right\}.
		\end{split}
	\end{equation}
The pressure stabilization is identical to the standard case. This leads to the final GFq form of the OSS stabilized scheme:
\begin{subequations}  
\begin{align}
\mathcal{A}_C \dfrac{d\mathbf{q}}{dt} + \mathcal{E}_{\textrm{C-GFq}}\mathbf{q} + \mathcal{E}_{\textrm{OSS-GFq}} \mathbf{q} =0
\end{align}
where
	\begin{align}
		\mathcal{E}_{\textrm{OSS-GFq}} := \alpha h \begin{pmatrix}
			Z_x \otimes D_yI_y &Z_xI_x \otimes {D}_y &0 \\
			{D}_x \otimes Z_yI_y & D_xI_x\otimes Z_y&0\\
			0&0&Z_x \otimes M_y + M_x\otimes Z_y
		\end{pmatrix} .\label{eq:ossgfqE}
	\end{align}
As for SUPG with GFq, we can readily characterize the steady states  of this method.
\end{subequations}

\begin{proposition}[Known  equilibria of OSS-GFq]\label{th:OSS_equilibrium}
Any state with constant pressure $p$
and velocities in  the kernel of the global flux divergence operator, as 
characterized by Theorem \ref{th:div_kernel} or equivalently Proposition \ref{th:div_kernel-phi}, is a steady equilibrium
of the GFq based  OSS method.
\end{proposition}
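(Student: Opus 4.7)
The strategy is completely analogous to the proof of Proposition \ref{th:SUPG_equilibrium}: verify that each row of $\mathcal{E}_{\textrm{C-GFq}} + \mathcal{E}_{\textrm{OSS-GFq}}$ vanishes when applied to a state $(u,v,p)$ with $p$ constant and $(u,v)$ in the kernel of the GFq divergence. By Proposition \ref{th:div_kernel} the latter is equivalent to $U+V = f(i,k) + g(j,s)$, where $U = \id_x \otimes I_y \, u$, $V = I_x \otimes \id_y \, v$, and $f$ (resp.\ $g$) only depends on the $x$-indices (resp.\ $y$-indices).

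For the third equation the verification is immediate: the convective part $D_x \otimes (D_y I_y) u + (D_x I_x) \otimes D_y v$ factors, via Proposition \ref{thm:highordercompositiontensor}, as $(D_x \otimes D_y)(U+V)$, which is the GFq divergence and vanishes by assumption; the pressure stabilization $(Z_x \otimes M_y + M_x \otimes Z_y) p$ vanishes because $p$ is constant, and $Z_x = D^x_x - D^x M_x^{-1} D_x$ annihilates constants (since $D_x$ and $D^x_x$ do).

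For the first (velocity-$u$) equation, the Galerkin flux $D_x \otimes M_y \, p$ vanishes because $D_x$ applied to a constant is zero. The OSS-GFq stabilization contribution, using the tensor-product factorization of Proposition \ref{thm:highordercompositiontensor}, rewrites as
\begin{equation*}
(Z_x \otimes D_y I_y)u + (Z_x I_x \otimes D_y)v = (Z_x \otimes D_y)\bigl[(\id_x \otimes I_y)u + (I_x \otimes \id_y)v\bigr] = (Z_x \otimes D_y)(U+V).
\end{equation*}
Applied to $U+V = f(i,k) + g(j,s)$, the $f$-part vanishes because $D_y$ kills any tensor depending only on the $x$-index, while the $g$-part vanishes because $Z_x$ annihilates tensors depending only on the $y$-index (which are ``constants'' from the $x$-viewpoint). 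The argument for the $v$-equation is identical with the roles of $x$ and $y$ exchanged, using $(D_x \otimes Z_y)(U+V)=0$.

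There is no real obstacle here: the whole argument is bookkeeping with tensor-product compositions. The only points that deserve attention are (i) the factorizations $Z_x \otimes D_y I_y = (Z_x \otimes D_y)(\id_x \otimes I_y)$ and $Z_x I_x \otimes D_y = (Z_x \otimes D_y)(I_x \otimes \id_y)$, which rely on Proposition \ref{thm:highordercompositiontensor}, and (ii) the fact that the operator $Z_x$ annihilates constants, which follows directly from its definition. Once these are noted, the proposition reduces to the kernel characterization of the GFq divergence established in Proposition \ref{th:div_kernel}.
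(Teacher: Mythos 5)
Your proof is correct and follows essentially the same route as the paper's: the paper also reduces everything to the vanishing of the pressure stabilization (constants in the kernel of $Z_x,Z_y$) and of the GFq divergence together with its projection $w_h^{\nabla\cdot\vec{u}}$, exactly as guaranteed by Proposition~\ref{th:div_kernel}. Your explicit factorization $s^u=(Z_x\otimes D_y)(U+V)$ via Proposition~\ref{thm:highordercompositiontensor} merely spells out, in the same spirit as the SUPG-GFq proof, what the paper's terser argument leaves implicit, so there is no gap and no genuinely different idea.
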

\begin{proof}
The only thing we need to check is that the stabilization  terms \eqref{eq:OSS_GF_uv} vanish. 
This is a trivial consequence of the hypotheses made, which imply that not only all the terms in the pressure stabilization vanish,
but also  $(D_x\otimes D_y )(U_h+V_h)$ and $w_h^{\nabla\cdot \vec{u}}$.
\end{proof}
For $\mathbb Q^1$, the right kernel is the same as for the SUPG~\eqref{eq:right_kernel_SUPG}.

Again, the above property is a  major    difference  with respect to the scheme of Section~\ref{sec:ossstandard}:
the stabilization shares with the non-stabilized scheme its kernel of equilibria,
characterized  by Theorem~\ref{th:div_kernel} and  Proposition~\ref{th:div_kernel-phi}. 
As already remarked for the SUPG, these may not be just the physically relevant ones, but the kernel might contain non-physical modes. 
This aspect is discussed in more detail in Section~\ref{sec:kernels}.\\


\subsection{Kernel of derivative operators}\label{sec:kernels}
Elements of the kernel of the GFq divergence operator are also contained in the kernel of the full discretization schemes proposed, including the stabilization, and that among them one can identify representatives of continuous stationary states. 
However, in general, other steady states, i.e. spurious modes, may exist in the kernel of the divergence operator.
In this case, it is essential that such unphysical modes are not in the kernel of the stabilization,  so that they will be dissipated, if present.

One of the interesting features of the tensor based GFq method is that   everything boils down to studying the $(U+V)$ variable instead of the two variables $u$ and $v$ independently, and information on one-dimensional kernels can be very easily applied to the multi-dimensional case.
This section is devoted to the study of the impact of the  SUPG and OSS stabilization on spurious modes contained in the kernel of the discrete divergence.

\subsubsection{One dimensional kernels}

Before dealing with two dimensions consider one-dimensional operators.
To do so, let us introduce another simplified notation for the following FEM spaces
\begin{subequations}
\begin{align}
	V^K_{\Delta x}(\Omega^x_{\Delta x}) &= \left\{ q \in \mathcal C^0(\Omega^x_{\Delta x}) : q|_{E} \in \mathbb P^K(E), \, \forall E \in \Omega^x_{\Delta x}  \right\},\\
	V_{\Delta x,0}^K(\Omega^x_{\Delta x}) &= \left\{ q \in \mathcal C^0(\Omega^x_{\Delta x}) : q|_{E} \in \mathbb P^K(E), \, \forall E \in \Omega^x_{\Delta x} \text{ and } q(x)=0\, \forall x \in \partial \Omega^x_{\Delta x} \right\},\\
	V_{\Delta x,b}^{K-1}(\Omega^x_{\Delta x}) &= \left\{ q \in L^2(\Omega^x_{\Delta x}) : q|_{E} \in \mathbb P^{K-1}(E), \, \forall E \in \Omega^x_{\Delta x}  \right\}.
\end{align}
\end{subequations}

In order to study the derivative operators, we do not impose any boundary conditions that could introduce further constraints. We are looking for the kernel of the following operators defined on $V^K_{\Delta x}(\Omega^x_{\Delta x}) \ni u_h$
\begin{align}
	D_x u := \int_{{\Omega^x_{\Delta x}}} \varphi^x(x) \partial_x u_h(x) , \qquad \forall \varphi^x \in V^K_0(\Omega^x_{\Delta x}),\\
	D^x_x u := \int_{{\Omega^x_{\Delta x}}} \partial_x \varphi^x(x) \partial_x u_h(x) , \qquad \forall \varphi^x \in V^K_0(\Omega^x_{\Delta x}). 
\end{align}
We observe that $V^K_{\Delta x}(\Omega^x_{\Delta x}) \sim \mathbb R^{N_x \times K +1}$, $V_{\Delta x,0}^K(\Omega^x_{\Delta x}) \sim\mathbb R^{N_x \times K -1}$, so the linear operators can be equivalently seen as $D_x, D_x^x: \mathbb R^{N_x\times K +1}\to\mathbb R^{N_x\times K -1}$.
Then it is clear that there is a non-empty kernel of dimension at least 2 for these operators. The trivial constant state is of course part of both kernels.
This gives us a hint to study a simplified form of these operators. Observe that $z_h :=\partial_x u_h \in V_{\Delta x,b}^{K-1}(\Omega^x_{\Delta x})$. Clearly, all the constant states $u_h\equiv u_0 \in \mathbb R$  vanish upon differentiation, as $\partial_x:V^{K}_h(\Omega^x_{\Delta x})\sim \mathbb R ^{N_x \times K+1} \to V^{K-1}_{\Delta x, b}(\Omega^x_{\Delta x})\sim \mathbb R ^{N_x \times K}$ has a one-dimensional kernel generated by $u_h\equiv 1$. This can be easily seen looking at each cell for functions that are $\partial_x u_h=0$. Indeed, being 0 polynomials in each cell, it must be that $u_h$ is constant in every cell, and, being continuous, it must be a constant over the whole domain.

Consider therefore new operators $\tilde{D}_x, \tilde{D}^x_x : V^{K-1}_{\dx, b}(\Omega^x_{\Delta x})\sim \mathbb R^{N_x \times K}\to V^{K}_{\dx,0}(\Omega^x_{\Delta x})\sim \mathbb R^{N_x \times K -1}$ defined as
\begin{align}
	\tilde{D}_x z := \int_{{\Omega^x_{\Delta x}}} \varphi(x) z_h(x) , \qquad \forall \varphi \in V^K_{\dx,0}(\Omega^x_{\Delta x}),\\
	\tilde{D}^x_x z := \int_{{\Omega^x_{\Delta x}}} \partial_x \varphi(x) z_h(x) , \qquad \forall \varphi \in V^K_{\dx,0}(\Omega^x_{\Delta x}). 
\end{align}
\begin{proposition}[Kernel characterization] \label{th:kernel_char}
	$\tilde{D}_x, \tilde{D}^x_x: \mathbb R^{N_x\times K }\to \mathbb R^{N_x \times K -1}$ have kernels of dimension one. The kernel of $\tilde{D}_x$ is generated by a function that is discontinuous at each cell interface, while the kernel of $\tilde{D}_x^x$ is generated by the constant function $1$.
\end{proposition}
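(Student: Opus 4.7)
The plan is to establish a uniform lower bound on both kernel dimensions by dimension counting, and then identify an explicit generator in each case. Since $\dim V^{K-1}_{\dx,b}(\Omega^x_{\dx}) = N_x K$ while $\dim V^K_{\dx,0}(\Omega^x_{\dx}) = N_xK - 1$, the rank--nullity theorem immediately gives $\dim\ker\tilde D_x \ge 1$ and $\dim\ker\tilde D_x^x \ge 1$, so it suffices to exhibit one nontrivial generator in each kernel and check that no further direction exists.

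For $\tilde D_x^x$ the constant $z_h \equiv 1$ lies in the kernel because $\int_{\Omega^x_{\dx}}\partial_x \varphi\, dx = \varphi(x_{N_x}) - \varphi(x_0) = 0$ for any $\varphi \in V^K_{\dx,0}$. To show that no other direction exists, take $z_h \in \ker \tilde D_x^x$ and perform cellwise integration by parts, yielding
\[\int_{\Omega^x_{\dx}} \partial_x \varphi\, z_h\, dx = -\sum_{i=1}^{N_x-1} [z_h]_{x_i}\, \varphi(x_i) \;-\; \sum_i \int_{E_i^x} \varphi\, \partial_x z_h\, dx,\]
with $[z_h]_{x_i} = z_h(x_i^+) - z_h(x_i^-)$ the interface jumps. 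Testing against bubbles supported inside a single cell and vanishing at its endpoints eliminates the interface sum; since these bubbles induce a weighted $L^2$ pairing that is nondegenerate on $\mathbb P^{K-2}(E_i^x)$, this forces $\partial_x z_h \equiv 0$ on each cell, so $z_h$ is piecewise constant. Then testing against continuous hat functions nonzero at each interior interface forces every jump to vanish, hence $z_h$ is globally constant.

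For $\tilde D_x$ the key move is an antiderivative trick. Define $\Psi(x) := \int_{x_0}^x z_h(s)\, ds$; then $\Psi \in V^K_{\dx}(\Omega^x_{\dx})$ is continuous and piecewise of degree $K$. Integration by parts gives $\int z_h\, \varphi\, dx = -\int \Psi\, \partial_x \varphi\, dx$, so $z_h \in \ker \tilde D_x$ iff $\Psi$ is $L^2$-orthogonal to $\partial_x V^K_{\dx,0}$. A short dimension count combined with the fundamental theorem of calculus shows
\[\partial_x V^K_{\dx,0} \;=\; \Big\{w \in V^{K-1}_{\dx,b} : \int_{\Omega^x_{\dx}} w\, dx = 0\Big\},\]
since both sides have dimension $N_xK - 1$; the kernel condition therefore becomes that the $L^2$ projection of $\Psi$ onto $V^{K-1}_{\dx,b}$ is a global constant. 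Decomposing cellwise $\Psi|_{E_i^x} = q_i + c_i \tilde P_K^i$ with $q_i \in \mathbb P^{K-1}(E_i^x)$ and $\tilde P_K^i$ the $K$-th Legendre polynomial rescaled to $E_i^x$ (orthogonal to $\mathbb P^{K-1}(E_i^x)$), the projection condition forces every $q_i$ to equal one common constant, while continuity of $\Psi$ at each interface together with $\tilde P_K^i(x_i) = (-1)^K$ and $\tilde P_K^{i-1}(x_i) = 1$ yields the recurrence $c_{i-1} = (-1)^K c_i$. Quotienting by the overall constant (killed by $\partial_x$) leaves one free parameter, so $z_h = \partial_x \Psi$ spans a one-dimensional space with $z_h|_{E_i^x}$ proportional to $(\tilde P_K^i)'$. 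Using $P_K'(1) = K(K+1)/2$ and $P_K'(-1) = (-1)^{K-1} K(K+1)/2$, a direct computation gives the jump $[z_h]_{x_i} = 2(-1)^{K-1} c_i K(K+1)/\dx \ne 0$ at every interior interface, confirming the claimed discontinuity.

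The main obstacle is the second operator: because $\tilde D_x$ couples the broken space $V^{K-1}_{\dx,b}$ to the continuous space $V^K_{\dx,0}$ in a Petrov--Galerkin fashion, direct localization is not available, and the antiderivative trick is essential to recast the kernel condition as an explicit $L^2$-orthogonality constraint on a continuous piecewise polynomial $\Psi$; the endpoint structure of the Legendre polynomials must then be invoked both to pin down the one-parameter family and to verify that the generator genuinely jumps across every interface.
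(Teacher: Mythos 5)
Your proof is correct, and it takes a genuinely different route from the paper's. The paper argues at the matrix level: it exploits the exactness of the $(K+1)$-point Gauss--Lobatto quadrature to turn the kernel equations tested at interface DoFs into the relation $q_{i,0}=-q_{i-1,K-1}$ (whence the discontinuity and non-constancy of the $\tilde D_x$ kernel vector), and then establishes that both operators have full rank $N_xK-1$ by exhibiting invertible maximal minors built from two local invertibility lemmas (the mixed mass matrix $B$ and the mixed derivative matrix $D$) in a block-triangular structure. You instead work variationally and basis-independently: rank--nullity gives the lower bound $\dim\ker\ge 1$; for $\tilde D_x^x$ cellwise integration by parts, interior bubbles (a nondegenerate weighted pairing on $\mathbb P^{K-2}$) and interface hat functions force $z_h$ to be the global constant; for $\tilde D_x$ the antiderivative $\Psi$ recasts the kernel condition as $L^2$-orthogonality to $\partial_x V^K_{\dx,0}=\{w\in V^{K-1}_{\dx,b}:\int w=0\}$, and the Legendre decomposition $\Psi|_{E_i}=c+c_i\tilde P^i_K$ with the continuity recurrence $c_{i-1}=(-1)^Kc_i$ pins down a one-parameter family. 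What your approach buys is independence from the Gauss--Lobatto nodal basis (only exactness of integration is used) and an explicit closed form of the spurious generator, $z_h|_{E_i}\propto(\tilde P^i_K)'$, together with the quantified nonzero jump $2(-1)^{K-1}c_iK(K+1)/\dx$ at every interface, which is slightly more information than the paper's conditional statement; what the paper's approach buys is that its two invertibility lemmas and the block-rank argument live directly on the assembled matrices of the scheme, which is the form in which these operators are actually used elsewhere in the analysis. Two cosmetic points only: for $K=1$ the bubble space is empty, so you should note that $\partial_xz_h\equiv 0$ holds there trivially because $z_h$ is already piecewise constant; and it is worth stating explicitly that the recurrence forces all $c_i\ne 0$ once the generator is nonzero, so the jump is indeed nonzero at \emph{every} interior interface.
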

The proof can be found in Appendix~\ref{app:kernel_one_dim}.
\begin{corollary}[Kernel characterization of $D_x^x,\, Z_x$ and $D_x$]
	Consider $D_x, D_x^x, Z_x: \mathbb R^{N_x\times K+1 } \to \mathbb R^{N_x \times K -1}$ defined with test functions in $V_{\dx,0}^K$ and trial functions in $V^K_\dx$. The kernel of $D_x^x$ is $\langle 1, x \rangle$, the kernel of $Z_x$ contains $\langle 1, x \rangle$, while the kernel of $D_x = \langle 1, w \rangle$ with $w$ a non-constant function with discontinuities in the first derivative at each cell interface. Moreover, the kernel of $Z_x$ does not contain $w$.
\end{corollary}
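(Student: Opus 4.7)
My plan is to factor the two first-order operators through the pointwise derivative $\partial_x\colon V_{\Delta x}^K \to V_{\Delta x,b}^{K-1}$ and then invoke Proposition~\ref{th:kernel_char}. Setting $z := \partial_x u_h \in V_{\Delta x,b}^{K-1}$, one has $D_x u = \tilde{D}_x z$ and $D_x^x u = \tilde{D}_x^x z$, so $\ker D_x = \partial_x^{-1}(\ker \tilde{D}_x)$ and $\ker D_x^x = \partial_x^{-1}(\ker \tilde{D}_x^x)$. To legitimize this pullback I would first check that $\partial_x$ is surjective onto $V_{\Delta x,b}^{K-1}$: a dimension count ($N_xK+1$ versus $N_xK$, with $\ker \partial_x = \langle 1\rangle$) shows this, and a concrete construction is provided by integrating any $g \in V_{\Delta x,b}^{K-1}$ cell by cell while fixing the integration constants at each interface so the antiderivative stays continuous.

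The kernel statements for $D_x^x$ and $D_x$ now fall out of Proposition~\ref{th:kernel_char}: $\ker \tilde{D}_x^x = \langle 1\rangle$ pulls back to $\langle 1, x\rangle$, giving $\ker D_x^x$; and $\ker \tilde{D}_x = \langle g\rangle$ with $g$ discontinuous at every cell interface pulls back to $\langle 1, w\rangle$, for any continuous antiderivative $w$ of $g$ produced by the construction above. Since $\partial_x w = g$ carries a jump at every interface, $w$ has the announced regularity (discontinuous first derivative at every interface) and in particular cannot coincide with any globally linear function.

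Next I would verify the two generators of $\langle 1, x\rangle \subseteq \ker Z_x$. The constant case is immediate: $D_x\cdot 1 = 0$ and $D_x^x \cdot 1 = 0$. For $u_h = x$, integration by parts gives $D_x^x x = 0$ since the interior test functions vanish on $\partial \Omega^x_{\Delta x}$. For the composite piece, the decisive step is to use the Lagrangian partition of unity $\sum_\beta \varphi_\beta \equiv 1$: combined with $\partial_x x \equiv 1$, it gives $(D_x x)_\alpha = \int \varphi_\alpha \,\mathrm{d}x = \sum_\beta (M_x)_{\alpha\beta}$, i.e.\ $D_x x = M_x \mathbf{1}$, where $\mathbf{1}$ is the DoF vector of the constant $1 \in V_{\Delta x}^K$. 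Hence $M_x^{-1} D_x x = \mathbf{1}$ and $D^x \mathbf{1} = 0$ (derivative of a constant), so $Z_x x = 0$. The exclusion of $w$ is then a clean consequence: $w \in \ker D_x$ collapses $Z_x w$ to $D_x^x w$, and since $\partial_x w = g$ is not a constant function, $w$ is not globally linear, so $w \notin \ker D_x^x = \langle 1, x\rangle$ and $Z_x w \neq 0$.

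The main obstacle is the identification $D_x x = M_x \mathbf{1}$: the $M_x$ appearing in $Z_x = D_x^x - D^x M_x^{-1} D_x$ must be read as the full-space mass matrix (as it naturally arises in the OSS derivation of Section~\ref{sec:ossstandard}), not as a restriction to interior DoFs, otherwise the partition-of-unity identity would leave a boundary-induced residue and the cancellation $D^x \mathbf{1} = 0$ would not suffice to kill $Z_x x$. Once this dimensional bookkeeping is pinned down, the rest of the argument is mechanical.
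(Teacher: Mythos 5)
Your handling of $D_x$ and $D_x^x$ (pulling back Proposition~\ref{th:kernel_char} through the surjection $\partial_x\colon V^K_{\Delta x}\to V^{K-1}_{\Delta x,b}$) and of the inclusion $\langle 1,x\rangle\subseteq\ker Z_x$ (via $D_x x=M_x\mathbf 1$, $M_x^{-1}D_x x=\mathbf 1$, $D^x\mathbf 1=0$) is correct and is essentially the paper's own route; in particular the full-space reading of the inner $M_x^{-1}D_x$ that you pin down at the end is exactly how the appendix sets up $Z_x$.

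The gap is in the exclusion of $w$. Having (rightly) insisted that the inner operators in $Z_x=D_x^x-D^xM_x^{-1}D_x$ act on the full space ($M_x^{-1},D_x\colon\mathbb R^{N_xK+1}\to\mathbb R^{N_xK+1}$, test \emph{and} trial in $V^K_{\Delta x}$), you cannot then invoke ``$w\in\ker D_x$'' to drop the projection term: $w$ generates the kernel of the \emph{interior-tested} $D_x$ only. The full-space $D_x w$ generally has nonzero entries against the two boundary basis functions -- for $K=1$, $w$ is the checkerboard $w_i=(-1)^i$ and $\int\varphi_0\,\partial_x w\,\dd x=(w_1-w_0)/2\neq0$; indeed the kernel of the full-space $D_x$ is just $\langle 1\rangle$. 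Hence $D^xM_x^{-1}D_x w$ does not vanish, the collapse $Z_xw=D_x^xw$ is unjustified, and you would still have to exclude an exact cancellation between $D_x^x w$ and this boundary-generated projection term (with an exactly integrated mass matrix, $M_x^{-1}D_xw$ is not even boundary-supported). The paper closes this step by a different mechanism: after subtracting an affine function (already known to lie in $\ker Z_x$), it uses the Gauss--Lobatto diagonal mass matrix to show $u^TZ_xu=\sum_{\alpha\in\mathcal E}\tfrac{\Delta x\,w_\alpha}{2}\left(\partial_xu(x_\alpha^-)-\partial_xu(x_\alpha^+)\right)^2\ge0$, so any element of $\ker Z_x$ must have derivatives that are continuous across every interface, which $w$ by construction violates. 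You need either this positive-semidefiniteness argument or a direct treatment of the boundary term to make the last claim stick.
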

The proof is trivial for $D_x^x$ and $D_x$, while the proof for $Z_x$ can be found in Appendix~\ref{app:kernel_one_dim}.

\begin{example}[Analysis of the one-dimensional operator kernels for $\mathbb P^2$]
 In the $\mathbb P^1$ case, i.e. for Finite Differences, one usually associates spurious modes with the checkerboard mode $t_x = -1$.  One can show that $D_x$ for $\mathbb P^2$ Finite Elements has a non-trivial kernel iff $t_x = 1$. This does not mean, though, that it is checkerboard-free, because 
 \begin{align}
  \ker \mathbb F_{t_x}(D_x) = \mathrm{span}\left(  \vecc{1}{0}, \vecc{0}{1} \right )
 \end{align}
 and thus the values $q_{i,0}$ and $q_{i,1}$ can be specified independently. The opposite case is exemplified by $D_x^x$ whose kernel is also nontrivial iff $t_x = 1$, but
 \begin{align}
  \ker \mathbb F_{t_x}(D^x_x) = \mathrm{span}\left(  \vecc{1}{1} \right ),
 \end{align}
 such that it contains only uniform constants, no checkerboards.

\end{example}

\subsubsection{Global flux operator kernels in two dimensions}

When we consider the two-dimensional extension of these operators, we can still focus on similar function spaces, namely $V_h^K(\Omega_h),V^{K}_{h,0}(\Omega_h), V^{K-1}_{h,b}(\Omega_h) $, obtaining for $(U+V)_h \in V^K_h(\Omega_h)$

\begin{align}
	D_x\otimes D_y (U+V) &= \int_{\Omega_h} \varphi( x,y) \partial_x\partial_y (U+V)_h( x,y) \dd  x \dd y, \qquad \forall \varphi \in V^{K}_{h,0}(\Omega_h),\\
	D^x_x\otimes D_y (U+V) &= \int_{\Omega_h} \partial_x \varphi( x,y) \partial_x\partial_y (U+V)_h( x,y) \dd  x \dd y, \qquad \forall \varphi \in V^{K}_{h,0}(\Omega_h),\\
	D_x\otimes D^y_y (U+V) &= \int_{\Omega_h} \partial_y \varphi( x,y) \partial_x\partial_y (U+V)_h( x,y) \dd  x \dd y, \qquad \forall \varphi \in V^{K}_{h,0}(\Omega_h).
\end{align}
Again, we notice that $V^K_h(\Omega_h) \sim \mathbb R^{(N_xK+1)\times (N_yK+1)}$, $V^K_{h,0}(\Omega_h) \sim \mathbb R^{(N_xK-1)\times (N_yK-1)}$ and $V^{K-1}_{h,0}(\Omega_h) \sim \mathbb R^{(N_xK)\times (N_yK)}$, and
that the operator $\partial_x \partial_y : V^K_h(\Omega_h) \to V^{K-1}_{h,0}(\Omega_h)$ has a kernel of dimension $N_x K+N_yK+1$.
Kernel bases trivially include $\lbrace (0,\dots,0,1,0,\dots,0)\otimes (1,\dots,1)\rbrace$ (these are $N_xK+1$), and $\lbrace  (1,\dots,1)\otimes (0,\dots,0,1,0,\dots,0)\rbrace$ (these are $N_yK+1$). The spaces generated by these two sets have an intersection which is of dimension one and generated by $(1,\dots,1)\otimes(1,\dots,1)$. Hence, we have a clear description of the kernel of $\partial_x\partial_y$ which consists of the constant-by-line solutions: the interesting steady states that are automatically in the kernel of all GF operators. We can now focus on the spurious modes.

Define $\widetilde{D_x\otimes D_y},\widetilde{D^x_x\otimes D_y},\widetilde{D_x\otimes D^y_y}: V^{K-1}_{h,b}(\Omega_h) \to V^{K}_{h,0}(\Omega_h)$ by their actions on $z_h\in V^{K-1}_{h,b}(\Omega_h)$ as
\begin{subequations}
\begin{align}
	\widetilde{D_x\otimes D_y} z_h &= \int_{\Omega_h} \varphi(x,y)z_h (x,y) \dd x \dd y, \qquad \forall \varphi \in V^{K}_{h,0}(\Omega_h),\\
		\widetilde{D^x_x\otimes D_y} z_h &= \int_{\Omega_h} \partial_x \varphi(x,y) z_h(x,y) \dd x \dd y, \qquad \forall \varphi \in V^{K}_{h,0}(\Omega_h),\\
		\widetilde{D_x\otimes D^y_y} z_h &= \int_{\Omega_h} \partial_y \varphi(x,y) z_h(x,y)\dd x \dd y, \qquad \forall \varphi \in V^{K}_{h,0}(\Omega_h).
\end{align} 
\end{subequations}
Note that $\widetilde{D_x\otimes D_y} = \tilde{D}_x \otimes \tilde{D}_y$, $\widetilde{D^x_x\otimes D_y} = \tilde{D}^x_x \otimes \tilde{D}_y$ and $\widetilde{D_x\otimes D^y_y} = \tilde{D}_x \otimes \tilde{D}^y_y$, and that the kernel of the Kronecker product of two such operators is given by the space 
\begin{align}
	\ker (A_x\otimes B_y) = \ker(A_x) \otimes V_{\Delta y}^K(\Omega_{\Delta y}^y) +V_{\Delta x}^K(\Omega_{\Delta x}^x)\otimes \ker(B_y).
\end{align}
Then, using Theorem~\ref{th:kernel_char}, we can infer many things on the kernel of the two-dimensional operators. First of all we notice that 
$$D_x \otimes D_y \Phi = 0 \Longleftrightarrow \Phi = \Phi_x +\Phi_y$$ with 
$ \Phi_x \in \ker(D_x)\otimes \mathbb R^{N_yK+1}$ and 
$ \Phi_y \in \mathbb R^{N_xK+1}\otimes \ker(D_y)$. This means that
\begin{equation}
	\begin{split}
		&\Phi_x = {1}\otimes g_1 +  {w} \otimes g_2 \text{ with  }g_1,g_2\in\mathbb R^{N_yK+1},\\
		&\Phi_y = f_1\otimes {1} + f_2\otimes {w} \text{ with  }f_1,f_2\in\mathbb R^{N_xK+1}.		
	\end{split}
\end{equation}
Let us focus on $\Phi_y$, as the same holds for the $x$ component.
We can distinguish between the desired equilibria $f_1\otimes {1}$ and the checkerboard spurious modes $f_2\otimes {w}$.
Clearly, $f_1\otimes {1}$ belongs also to the kernel of the stabilization $D_x\otimes D^y_y$ and $D^x_x\otimes D_y$ as $1\in \ker(D_y)$ and $1\in\ker (D_y^y)$; so the desired equilibria are preserved.
At the same time,
$$( D_x \otimes D_y^y) (f_2\otimes {w}) \neq 0  \Longleftrightarrow f_2 \not \in \ker (D_x).$$
So, the spurious modes of $D_x\otimes D_y$ are diffused away by the stabilization operators,  except for those generated by $1\otimes w, w \otimes 1, $ and $w\otimes w$. In principle, these modes could be filtered out by boundary conditions or initial conditions.

\begin{example}[Fourier analysis of the operator kernels for $\mathbb Q^1$]
 Recall that for $\mathbb Q^1$ Finite Elements, the characteristic polynomials of the relevant operators are
 \begin{subequations}
 \begin{align}
  \ker (D_x \otimes D_y) = \left\{ \Phi : \frac{(t_x^2 -1)}{2 t_x \Delta x }\frac{(t_y^2 -1)}{2 t_y \Delta y} \Phi = 0\right \}, \\
  \ker (D_x^x \otimes D_y) = \left\{ \Phi : \frac{(t_x -1)^2}{t_x \Delta x }\frac{(t_y^2 -1)}{2 t_y \Delta y} \Phi= 0\right \}, \\
  \ker (D_x \otimes D^y_y) = \left\{ \Phi : \frac{(t_x^2 -1)}{2 t_x \Delta x }\frac{(t_y -1)^2}{t_y \Delta y} \Phi= 0 \right \}.
 \end{align}
 \end{subequations}
 The kernel of $D_x \otimes D_y$ contains functions that are either constant in one of the directions, or are checkerboards $\mathfrak c \in \mathfrak C$ in one of the directions:
 \begin{align}
  \mathfrak C_x &= \{ \phi \neq 0 : (t_x+1)\phi = 0 \}, & \mathfrak C_y &= \{ \phi \neq 0 : (t_y+1)\phi = 0 \}, & \mathfrak C &:= \mathfrak C_x \cup \mathfrak C_y.
 \end{align}
One easily can verify the inclusions
 \begin{align}
  \ker (D_x^x \otimes D_y) &\subset \ker (D_x \otimes D_y), & \ker (D_x \otimes D^y_y) &\subset \ker (D_x \otimes D_y) .
 \end{align}

However, not all the checkerboards are damped by the numerical diffusion:
\begin{align}
 \mathfrak a \in \mathfrak C_x \Rightarrow \begin{cases} a \not \in \ker (D_x^x \otimes D_y) & \text{if } t_y^2 \neq 1, \\
 	 a \in \ker (D_x^x \otimes D_y) &  \text{if } t_y^2 = 1.
 \end{cases}
\end{align}
The checkerboards not dissipated are
\begin{align}
 \{ \phi \neq 0 : (t_x + 1)(t_y^2 - 1)\phi = 0 \} \cup \{ \phi \neq 0 : (t_y + 1)(t_x^2 - 1)\phi = 0  \}.
\end{align}

\end{example}

\subsection{Discrete involutions} \label{ssec:discreteinvolutions}

Involutions of the SUPG-GFq matrix $\mathcal E_{\text{SUPG-GFq}}$ \eqref{eq:evolutionmatrixsupggf_all} can be found by using the characteristic polynomial / Fourier representation of the scheme, following  \cite{barsukow17a} as done in Section \ref{sec:supgstandard}.

The left kernel (the kernel of the transpose, related to the stationary involution) of $\mathcal E_{\textrm{SUPG-GFq}}$ in the $\mathbb Q^1$ case is parallel to
{
\begin{align}
\!\!\!\!\begin{pmatrix}
\mathbb F_{t_x}(D_x) \Big(\mathbb F_{t_y}(D_y)^2 \mathbb F_{t_x}(M_x)-\alpha^2 h^2    \mathbb F_{t_y}(D_{y}^y) \factorTmpy \Big(   \mathbb F_{t_y}(D_{y}^y) \factorTmpy
\mathbb F_{t_x}(M_x)+   \mathbb F_{t_x}(D_{x}^x) \factorTmp \mathbb F_{t_y}(M_y)\Big)\Big)
\\[10pt]
\mathbb F_{t_y}(D_y) \Big(-\mathbb F_{t_x}(D_x)^2 \mathbb F_{t_y}(M_y)+\alpha^2 h^2   \mathbb F_{t_x}(D_{x}^x) \factorTmp \Big(   \mathbb F_{t_y}(D_{y}^y)
\factorTmpy \mathbb F_{t_x}(M_x)+  \mathbb F_{t_x}(D_{x}^x) \factorTmp \mathbb F_{t_y}(M_y)\Big)\Big)
\\[10pt]
\alpha h  \left(-  \mathbb F_{t_x}(D_{x}^x) \mathbb F_{t_y}(D_y)^2 \factorTmp \mathbb F_{t_x}(M_x)+\mathbb F_{t_x}(D_x)^2  
\mathbb F_{t_y}(D_{y}^y) \factorTmpy \mathbb F_{t_y}(M_y)\right)
\end{pmatrix}^T
\end{align}}
Observe that it does not depend on $I_x, I_y$.

\begin{proposition}[Involutions of SUPG-GFq] \label{thm:involutionsupggf}
 There exist $K^2$ discrete involutions remaining stationary for any initial data subject to evolution according to SUPG-GFq with $\mathbb Q^K$ FEM.
\end{proposition}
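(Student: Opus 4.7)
The plan is to use the Fourier/characteristic-polynomial approach adopted in Sections \ref{sec:supgstandard} and \ref{ssec:globalfluxsupg}, in which a discrete involution corresponds to a left-null vector of the evolution matrix: if $\ell^T \mathcal E_{\text{SUPG-GFq}} = 0$, left-multiplying the semi-discrete equation $\mathcal A_{\textrm{SUPG}} \partial_t q + \mathcal E_{\text{SUPG-GFq}} q = 0$ by $\ell^T$ yields $\partial_t(\ell^T \mathcal A_{\textrm{SUPG}} q) = 0$. The number of independent involutions per Fourier mode is therefore the dimension of the left kernel of $\mathbb F_{t_x,t_y}(\mathcal E_{\text{SUPG-GFq}})$, which in the $\mathbb Q^K$ case is a $3K^2 \times 3K^2$ matrix. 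The claim thus reduces to showing that this left kernel has dimension at least $K^2$ for all $(t_x,t_y)$.

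The key observation is that, by construction of the GFq scheme, both the $u$- and $v$-block columns of $\mathcal E_{\text{SUPG-GFq}}$ factor through an operator acting on the scalar $U+V$. Applying Proposition \ref{thm:charpolcomposition} together with $\mathbb F_{t_x,t_y}(A\otimes B)=\mathbb F_{t_x}(A)\otimes \mathbb F_{t_y}(B)$, one can rewrite the first two block columns of $\mathbb F_{t_x,t_y}(\mathcal E_{\text{SUPG-GFq}})$ as $S\bigl(\id \otimes \mathbb F_{t_y}(I_y)\bigr)$ and $S\bigl(\mathbb F_{t_x}(I_x)\otimes \id\bigr)$ respectively, with the common $3K^2\times K^2$ left factor
\[
S := \begin{pmatrix} \alpha h\,\mathbb F_{t_x}(D_x^x) \otimes \mathbb F_{t_y}(D_y) \\ \alpha h\,\mathbb F_{t_x}(D_x) \otimes \mathbb F_{t_y}(D_y^y) \\ \mathbb F_{t_x}(D_x) \otimes \mathbb F_{t_y}(D_y) \end{pmatrix}.
\]

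A rank count then closes the argument: the images of the first two block columns both lie inside $\mathrm{Im}(S)$, which has dimension at most $K^2$, while the pressure block column contributes at most $K^2$ further dimensions. Hence the rank of $\mathbb F_{t_x,t_y}(\mathcal E_{\text{SUPG-GFq}})$ is bounded by $2K^2$, so that its left kernel has dimension at least $3K^2 - 2K^2 = K^2$. Each left-null family yields an independent conserved functional of the semi-discrete evolution, which is a discrete involution.

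The main obstacle is to lift this per-mode bound to the existence of $K^2$ genuinely independent, globally-defined involutions: one should check that the bound $\mathrm{rank}\leq 2K^2$ is attained on a Zariski-dense set of Fourier modes, so that the left-nullity equals exactly $K^2$ generically. For $K=1$ this is confirmed by the explicit left-null vector displayed just before the proposition; for general $K$, the factorization through $S$ already isolates the essential structure, and extending the explicit construction is then a bookkeeping exercise analogous to the $\mathbb Q^1$ case.
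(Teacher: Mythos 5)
Your argument is correct and is essentially the paper's own proof: writing the first two block columns as $S(\id\otimes \mathbb F_{t_y}(I_y))$ and $S(\mathbb F_{t_x}(I_x)\otimes\id)$ is exactly the paper's observation that $\mathcal E_{\mathrm{SUPG\text{-}GFq}}\,q$ depends only on $(U+V,p)$, so the symbol factors through a $3K^2\times 2K^2$ matrix whose left kernel is at least $K^2$-dimensional. The genericity/globality worry in your last paragraph is not needed for the stated lower bound (the paper likewise stops at the dimension count), since the left kernel over the field of fractions can be cleared of denominators to give Laurent-polynomial, i.e.\ compact difference, left-null operators.
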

\begin{proof}
 We use the fact that the SUPG-GFq method for linear acoustics can be written as a function of $U+V$ and $p$, instead of $u,v,p$ individually. This is obvious from Equation \eqref{eq:supg-gfq1}. In particular the $3\times 3$ (block) matrix $\mathcal E_{\mathrm{SUPG-GFq}}$ from \eqref{eq:evolutionmatrixsupggf} can be written as
 \begin{align}
  \mathcal E_{\mathrm{SUPG-GFq}} \veccc{u}{v}{p} = \underbrace{\left( \begin{array}{cc} \alpha h D_x^x \otimes D_y & D_x \otimes M_y \\ \alpha h D_x \otimes D_y^y & M_x \otimes D_y \\ D_x \otimes D_y & \alpha h D_x^x \otimes M_y + \alpha  h M_x \otimes D_y^y \end{array} \right )}_{=: \hat {\mathcal E}_{\mathrm{SUPG-GFq}}} \vecc{U+V}{p} 
 \end{align}
 Matrix $\hat {\mathcal E}_{\mathrm{SUPG-GFq}}$, for $\mathbb Q^k$ FEM, has $3 K^2$ rows and $2 K^2$ columns. Its left kernel therefore is at least $K^2$-dimensional.
\end{proof}

However, in practice it is rather difficult to explicitly express the preserved involution. In particular, one cannot simply extend the result of $\mathbb Q^1$. Defining 
\begin{align}
\mathcal K :=  D_{x}^x \otimes \factorTmp M_y +    
M_x \otimes D_{y}^y \factorTmpy,\quad
\omega := \veccc{D_x \Big(M_x \otimes D_y^2 -\alpha^2 h^2  D_{y}^y \factorTmpy \mathcal K\Big)
}{%
D_y \Big(-D_x^2 \otimes M_y+\alpha^2  h^2 D_{x}^x \factorTmp \mathcal K\Big)
}{%
\alpha h  \left(-  M_xD_{x}^x \otimes D_y^2 \factorTmp  +   D_x^2 \otimes 
M_yD_{y}^y \factorTmpy \right)  }^\text{T} \label{eq:involutionQ1}
\end{align}
we find 
$$
(\omega \hat {\mathcal E})  = ( v_1\,\;v_2)^T
$$
with
{\footnotesize
$$
\begin{aligned}
v_1 &=  
 \alpha  h \Big\{[D_x,M_xD_x^x] \otimes D_y^3 +   D_x^3 \otimes[M_yD_y^y, D_y] + \alpha^2 h^2 \left( (D_x^x \otimes D_y) [\mathcal K, D_x \otimes D_y^y] + [D_x^x \otimes D_y, (D_x \otimes D_y^y) \mathcal K]  \right) \Big\}, \\[10pt]
v_2&=D_x[M_x,D_x] \otimes D_y[D_y,M_y] + \alpha^2 h^2  [(D_x^x \otimes D_y)\mathcal K, M_x \otimes D_y]  + \alpha^2 h^2 [D_x \otimes M_y, (D_x \otimes D_y^y)\mathcal K] .
\end{aligned}
$$}
The appearance of commutators in each term in general prevent $\omega$ from being an involution. This is due to the fact that one cannot perform the same computations with block matrices as with usual matrices if the blocks do not commute. It is only for $K=1$ that the matrices reduce to scalars and \eqref{eq:involutionQ1} is indeed the involution.

The left kernel of $\mathcal E_\textrm{OSS-GFq}$ in~\eqref{eq:ossgfqE} is, for $\mathbb Q^1$,
\begin{equation}
	\begin{pmatrix}
\mathbb F_{t_x,t_y}(D_y) \mathbb F_{t_x,t_y}(M_x)+ \alpha^2h^2 \mathcal{K}_u \\
	-\mathbb F_{t_x,t_y}(D_x) \mathbb F_{t_x,t_y}(M_y)+\alpha^2 h^2 	\mathcal{K}_v \\
	\alpha h \left(-\frac{\mathbb F_{t_x,t_y}(D_x^x) \mathbb F_{t_x,t_y}(D_y) \mathbb F_{t_x,t_y}(M_x)}{\mathbb F_{t_x,t_y}(D_x)}+\frac{\mathbb F_{t_x,t_y}(D_x)
	\mathbb F_{t_x,t_y}(D_y^y) \mathbb F_{t_x,t_y}(M_y)}{\mathbb F_{t_x,t_y}(D_y)}\right)
	\end{pmatrix},
\end{equation}
where $\mathcal{K}_u$ and $\mathcal{K}_v$ are reported in Appendix~\ref{sec:curl_inv_OSS}.
It is a consistent discretization of $\left( \partial_y, - \partial_x,0 \right)^\text{T}$.

\section{Time stepping via Deferred Correction}  \label{sec:time}

In Equations~\eqref{eqs:standard_SUPG}, we have described the classical SUPG discretization of the linear acoustic system within a time-continuous framework, while in \eqref{eq:supg_GF} we have modified the spatial discretization to obtain a GF formulation that is vorticity preserving. Similarly, in Section~\ref{ssec:globalfluxoss} we have introduced the spatial discretization of the OSS and its GF version.
The goal of this section is to introduce an arbitrarily high-order time discretization.

The deferred correction is a class of arbitrarily high-order time integrations that are based on a space--time residual formulation \cite{fox1949some,dutt2000spectral,minion2003semi}. This residual is then approximated with a certain level of accuracy that matches the order of the space--time discretization.
We start by introducing Lagrangian basis functions in time (we will use Gauss-Lobatto Lagrangian basis functions). 
Then, we define 2 time operators, following \cite{abgrall2017high}: a high-order one that corresponds to an implicit collocation RK, in this case to the Lobatto IIIA, and a low-order one that corresponds to an explicit RK, in our case the explicit Euler method. 
The implicit high-order operator will be based on the residual that we want to minimize, while the explicit one is a simple aid to set up an iterative process.

The implicit operator can be seen as a high-order FEM discretization in space and time. We focus, for simplicity, on the ODE \begin{equation}\label{eq:ODE}
	u'+F(u)=0
\end{equation} 
inside one timestep $[t_n, t_{n+1}]$, as for every one-step method. 
In this timestep, we define $M$ sub-timesteps through $M+1$ sub-timenodes $t_n=t^0<\dots <t^m < \dots <t^M=t_{n+1}$ and the Lagrangian interpolating polynomials $\gamma_r(t)$ for $r=0,\dots,M$. We then denote with the superscript $r$ the approximation of the solution $q$ at the sub-timenode $t^r$, i.e., $q^r\approx q(t^r)$.
Now, for each sub-timenode $m=1,\dots,M$, the high-order time discretization operator reads
\begin{equation}
	T^{2,m}(\underline{q}) = \frac{q^m-q^0}{\dt} + \frac{1}{\Delta t} \int_{t^0}^{t^m} F\left(  \sum_{r=0}^{M} \gamma_r(t) q^r \right)\, dt.
\end{equation}
Then, using as quadrature points the same Lagrangian subtimenodes, we have
\begin{equation}
	T^{2,m}(\underline{q}) = \frac{q^m-q^0}{\dt} + \sum_{r=0}^{M} \frac{1}{\Delta t}\int_{t^0}^{t^m}\gamma_r(t) \, dt\, F\left(    q^r \right) = \frac{q^m-q^0}{\dt} + \sum_{r=0}^{M} \theta_r^m F\left(    q^r \right),
\end{equation}
with $\theta_r^m =\frac{1}{\Delta t} \int_{t^0}^{t^m} \gamma_r (t) dt$, which are independent on $\Delta t$.
The simple explicit Euler operator, instead, will read 
\begin{equation}
	T^{1,m}(\underline{q}) = \frac{q^m-q^0}{\dt} + \sum_{r=0}^{M} \frac{1}{\Delta t}\int_{t^0}^{t^m}\gamma_r(t) \, dt\, F\left(    q^0 \right) = \frac{q^m-q^0}{\dt} + \beta^m F\left(    q^0 \right),
\end{equation}
with $\beta^m = \frac{t^m-t^0}{\Delta t}$.
Moreover, as in our case, mass matrices or more complex spatial discretizations can be included in both operators. One can further simplify the $T^1$ operator by using a lumped first-order approximation version of the mass matrix in front of the term $\frac{q^m-q^0}{\dt}$, leading to an explicit matrix-free solver for $T^1(\underline{q})=\underline{r}$. 

The DeC iterative method is then defined as 
\begin{equation}\label{eq:dec_iterations}
	\begin{cases}
		\underline{q}^{(0)} = q(t^0)=q_n,\\
		T^1(\underline{q}^{(p)}) = T^1(\underline{q}^{(p-1)})-T^2(\underline{q}^{(p-1)}),\qquad \text{for }p=1,\dots, P,\\
		q_{n+1} = q^{(P)}(t^M),
	\end{cases}
\end{equation}
with $P$ being the order of accuracy of the scheme. Notice that each iteration $p$ implies the solution of $M$ systems that are explicit and matrix--free. Hence, for each time step, in total, we need to compute around $MP$ equivalent RK stages, more precisely $M(P-1)+1$.
\begin{proposition}[Order of accuracy of DeC time integrator \cite{micalizzi2022new,han2021dec}]
	The order of accuracy of the DeC is the minimum between the number of iterations $P$ and the order of the time discretization $Q$. For Gauss--Lobatto nodes it is $\min(P,2M)$, with $M$ the number of the subtimesteps.
\end{proposition}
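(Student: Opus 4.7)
The statement is cited from \cite{micalizzi2022new,han2021dec}, so the proof is standard DeC analysis. My plan is to establish the two ingredients separately: (i) the fixed point of the iteration has the accuracy of the implicit collocation scheme, and (ii) each DeC iteration gains exactly one order in $\Delta t$ of closeness to that fixed point. Combining them yields $\min(P,Q)$ with $Q=2M$ for Gauss--Lobatto collocation.

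For (i), I would observe that at a fixed point $\underline{q}^{(\infty)}$ of \eqref{eq:dec_iterations}, the update equation forces $T^{2}(\underline{q}^{(\infty)})=0$. Writing this out, the fixed point satisfies
\begin{equation*}
q^{m,(\infty)} = q^{0} - \Delta t \sum_{r=0}^{M}\theta_r^m F\!\left(q^{r,(\infty)}\right),\qquad m=1,\dots,M,
\end{equation*}
which is precisely the collocation Runge--Kutta method associated with the Lagrange interpolation on the chosen subtimenodes. For Gauss--Lobatto nodes this is Lobatto IIIA, whose order is known to be $Q=2M$. Thus $\underline{q}^{(\infty)}$ approximates the ODE solution with error $\mathcal{O}(\Delta t^{Q})$ per timestep.

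For (ii), the key is the classical contraction estimate of Abgrall. I would prove two properties of the operators: first, the coercivity of $T^{1}$, namely
\begin{equation*}
\|T^{1}(\underline{u})-T^{1}(\underline{v})\| \geq \alpha_1 \|\underline{u}-\underline{v}\|,
\end{equation*}
which follows from the fact that $T^{1}$ has the identity-like leading term $(q^m-q^0)/\Delta t$ with lumped mass. Second, the Lipschitz smallness of $T^{1}-T^{2}$,
\begin{equation*}
\|(T^{1}-T^{2})(\underline{u})-(T^{1}-T^{2})(\underline{v})\| \leq \alpha_2 \Delta t\,\|\underline{u}-\underline{v}\|,
\end{equation*}
which holds because the $(q^m-q^0)/\Delta t$ term cancels and the remaining $F$-dependent contribution carries a prefactor $\Delta t$ (from $\theta_r^m$ and $\beta^m$) and inherits a Lipschitz constant from $F$. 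Given these two bounds, subtracting $T^{1}(\underline{q}^{(\infty)})=T^{1}(\underline{q}^{(\infty)})-T^{2}(\underline{q}^{(\infty)})$ from the DeC update yields
\begin{equation*}
\|\underline{q}^{(p)}-\underline{q}^{(\infty)}\| \leq \frac{\alpha_2}{\alpha_1}\,\Delta t\,\|\underline{q}^{(p-1)}-\underline{q}^{(\infty)}\|,
\end{equation*}
so by induction $\|\underline{q}^{(P)}-\underline{q}^{(\infty)}\| \leq (\alpha_2/\alpha_1)^{P}\Delta t^{P}\,\|\underline{q}^{(0)}-\underline{q}^{(\infty)}\|$. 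Since $\underline{q}^{(0)}-\underline{q}^{(\infty)}=\mathcal{O}(1)$ in norm but actually $\mathcal{O}(\Delta t)$ because both equal $q_n$ at $t^0$ and differ only through the evolution on $[t_n,t_{n+1}]$, one obtains an overall per-step error of $\mathcal{O}(\Delta t^{P+1})$, hence a global order $P$ from this contribution.

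Adding the two error sources by the triangle inequality gives $\|q_{n+1}-q(t_{n+1})\|=\mathcal{O}(\Delta t^{\min(P,Q)+1})$ per step, i.e.\ global order $\min(P,2M)$. The main obstacle in turning this sketch into a rigorous proof is verifying the Lipschitz constant $\alpha_2\Delta t$ of $T^{1}-T^{2}$ in the presence of the spatial mass matrix and the lumping used for $T^{1}$; one must check that the difference between the exact and lumped mass operators does not spoil the required $\Delta t$ prefactor, which is standard but needs care in the matrix-weighted norm. The rest is bookkeeping of constants uniform in $\Delta t$.
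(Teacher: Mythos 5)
Your sketch is the standard DeC convergence argument used in the cited references---identify the fixed point with the Lobatto IIIA collocation scheme of order $2M$ on the $M+1$ Gauss--Lobatto subtimenodes, then combine the coercivity of $T^1$ with the $\mathcal{O}(\Delta t)$-Lipschitz bound on $T^1-T^2$ to gain one order per iteration from an $\mathcal{O}(\Delta t)$ initial discrepancy---which is exactly the argument the paper relies on, since the paper itself offers no proof and simply quotes the result from \cite{micalizzi2022new,han2021dec}. The only remark is that the mass-matrix/lumping subtlety you flag is not an issue for this ODE-level statement; it only enters in the subsequent space--time proposition, where the paper again defers to the cited works.
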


Now, for clarity, we discuss the SUPG/OSS spatial discretization in the matrix formulation introduced in \eqref{eq:standardsupgfindiff_compact} and in \eqref{eq:OSS_with_matrix}, respectively. We will give in Appendix~\ref{app:fully_discrete_L2} the expansion of the space-time discretization in each equation, for ease in reproducibility (only for the SUPG case).
The $T^2$ operator encompasses the SUPG/OSS residual, so, differently from the ODE case, we have to insert also the mass matrix term. 
It is defined for $m=1,\dots, M$ as
\begin{equation}\label{eq:L2SUPG}
	T^{2,m}(\underline{{q}})= \mathcal{A} \frac{{q}^m-{q}^0}{\Delta t} +  \sum_{r=0}^M \theta_r^m \mathcal{E} {q}^r.
\end{equation}
On the other side, the $T^1$ low order operator is a simplified version of $T^2$, in particular, the mass matrix is a simple lumped version of the mass matrix, i.e., $(L_x)_{\alpha , \beta} = \delta_{\alpha,\beta }\int_{\Omega_{\Delta x}^x} \varphi_{\alpha}^x(x)\dd x$, which we define as 
\begin{equation}
	\mathcal{L}:= \begin{pmatrix}
		L_x \otimes L_y & 0&0\\
		0&L_x \otimes L_y & 0\\
	0&0&	L_x \otimes L_y \\
	\end{pmatrix}.
\end{equation}
Hence, the $T^1$ operator can be defined for $m=1,\dots, M$ as
\begin{equation}
	T^{1,m}(\underline{{q}})= \mathcal{L} \frac{{q}^m-{q}^0}{\Delta t} +  \beta^m \mathcal{E} {q}^0.
\end{equation}

The update formula in \eqref{eq:dec_iterations}, after the simplification of the terms in the $T^1$ operators reads for every $p=1,\dots,P$ and every $m=1,\dots,M$ 
\begin{align}
	0=\mathcal{L} \frac{{q}^{(p),m}-{q}^{(p-1),m}}{\Delta t}  + \mathcal{A} \frac{{q}^{(p-1),m}-{q}^0}{\Delta t} +  \sum_{r=0}^M \theta_r^m \mathcal{E} {q}^{(p-1),r},
\end{align}
where ${q}^{(p),m}$ is the only unknown term and $\mathcal L$ is a diagonal matrix.

\begin{proposition}[Order of accuracy of the space--time DeC]
	The SUPG--DeC/OSS--DeC space time discretization presented above is of order  $\min(K+1,2M,P)$. 
\end{proposition}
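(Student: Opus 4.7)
The plan is to combine three ingredients: a spatial accuracy estimate of order $K+1$ for the semi-discrete operator, the classical $2M$-th order accuracy of the implicit LobattoIIIA collocation encoded by $T^2$, and the standard DeC convergence theory that transfers the latter into an accuracy of order $\min(P, 2M)$ after $P$ corrector iterations.

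First, I would establish the semi-discrete spatial error. For tensor-product $\mathbb{Q}^K$ FEM on Cartesian grids, the Galerkin terms and the SUPG/OSS stabilizations are standard and yield a spatial truncation error of order $h^{K+1}$; for the GFq modifications, the required nodal consistency estimate is precisely the content of Proposition \ref{th:div-consistency} (with $M\geq K+1$ for the integration tables $I_x, I_y$). Stability of $\mathcal{E}_{\textrm{SUPG}}$ and $\mathcal{E}_{\textrm{OSS}}$ ensured by their anti-symmetric/symmetric decomposition (noted after \eqref{eq:standardsupgfindiff_compact}) then allows one to conclude, by the usual semi-discrete error estimate, that $\| q_h(t) - q(t)\| = O(h^{K+1})$ on finite time intervals.

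Next I would apply the DeC convergence framework of \cite{abgrall2017high,han2021dec,micalizzi2022new} to the resulting system of ODEs $\mathcal{A}\frac{dq}{dt} + \mathcal{E} q = 0$. The operator $T^2$ in \eqref{eq:L2SUPG} is exactly the LobattoIIIA collocation of this ODE on $M+1$ Gauss--Lobatto subtimenodes, whose classical order is $2M$, so the exact semi-discrete solution satisfies $T^{2,m}(\underline{q}_{\text{exact}}) = O(\dt^{2M})$. The operator $T^1$ uses the lumped diagonal mass matrix $\mathcal{L}$ in the time-derivative block; it is invertible uniformly in $h$ and $\dt$ and its leading part is $\mathcal{L}/\dt$. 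Two routine checks then suffice: $(T^1)^{-1}$ is bounded, and $T^2 - T^1$ applied to a difference of two iterates produces a factor $\dt$ (via the telescoping $\mathcal{A}(q^m - q^0)/\dt - \mathcal{L}(q^m-q^0)/\dt$ together with the explicit sums $\sum_r \theta_r^m \mathcal{E} q^r - \beta^m \mathcal{E} q^0$, which are Lipschitz with constant $O(\dt)$ under a CFL-type assumption $\dt \lesssim h$). These are the hypotheses required for the DeC abstract result: each iteration gains one order in $\dt$, capped by the order $2M$ of $T^2$, so that after $P$ iterations the fully discrete temporal error is $O(\dt^{\min(P,2M)})$.

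The conclusion follows by a triangle inequality combining the spatial and temporal contributions, giving a total error $O(h^{K+1} + \dt^{\min(P,2M)}) = O(h^{\min(K+1,2M,P)})$ under a mesh ratio $\dt \sim h$. The main technical obstacle is the verification of the DeC Lipschitz estimate with the mass matrices $\mathcal{A}$ and $\mathcal{L}$ that are not multiples of the identity: one has to show that the spectral radius of $\mathrm{Id} - (T^1)^{-1} T^2$, acting on the stacked vector of sub-timestep values, is strictly less than one uniformly in $h$ once $\dt/h$ is small enough, which essentially amounts to a standard explicit-scheme CFL analysis of $\mathcal{L}^{-1}\mathcal{E}$.
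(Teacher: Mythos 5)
Your proposal is correct and takes essentially the same route as the paper: the paper's proof is a one-line appeal to the DeC convergence framework of \cite{abgrall2017high,micalizzi2022new} together with the SUPG/OSS spatial analysis of \cite{michel2021spectral,michel2022spectral}, which is exactly the combination you spell out (spatial accuracy $K+1$, LobattoIIIA order $2M$ for $T^2$, one order gained per correction, capped at $\min(P,2M)$). One small caution: you reuse the symbol $M$ both for the number of subtimesteps and for the exactness order of the integration tables in Proposition~\ref{th:div-consistency}, which are distinct quantities and should not be conflated.
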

\begin{proof}
	The proof follows the ones of \cite{abgrall2017high,micalizzi2022new} with the details on the SUPG/OSS discretization defined in \cite{michel2021spectral,michel2022spectral}.
\end{proof}
\begin{proposition}[Steady states of the DeC]
	If $\bar{{q}}$ is such that $\mathcal{E} \bar{{q}}=0$, then $\bar{{q}}$ is a steady state of the DeC, i.e., if ${q}_{n}=\bar{{q}}$ then ${q}_{n+1}=\bar{{q}}$.
\end{proposition}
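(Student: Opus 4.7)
The plan is to proceed by induction on the DeC iteration index $p$, proving that if $q_n=\bar q$ with $\mathcal E\bar q=0$, then at every iteration level $p$ and every sub-timestep $m$ one has $q^{(p),m}=\bar q$. This immediately gives $q_{n+1}=q^{(P),M}=\bar q$.

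For the base case $p=0$, by the DeC initialization we have $q^{(0),m}=q_n=\bar q$ for all $m=0,\dots,M$, so the statement is trivial. For the inductive step, assume $q^{(p-1),m}=\bar q$ for all $m$, and note that in DeC the initial sub-timenode is frozen at $q^{(p),0}=q_n=\bar q$ for every $p$. Then both sides of the update equation $T^{1,m}(\underline q^{(p)})=T^{1,m}(\underline q^{(p-1)})-T^{2,m}(\underline q^{(p-1)})$ can be evaluated explicitly. The right-hand side is
\begin{equation*}
\Big(\mathcal{L}\tfrac{\bar q-\bar q}{\Delta t}+\beta^m\mathcal{E}\bar q\Big)-\Big(\mathcal{A}\tfrac{\bar q-\bar q}{\Delta t}+\sum_{r=0}^M\theta_r^m\mathcal{E}\bar q\Big)=0,
\end{equation*}
since $\bar q-\bar q=0$ and $\mathcal{E}\bar q=0$ by assumption.

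The left-hand side reduces, using $q^{(p),0}=\bar q$ and $\mathcal E\bar q=0$, to
\begin{equation*}
T^{1,m}(\underline q^{(p)})=\mathcal{L}\frac{q^{(p),m}-\bar q}{\Delta t}+\beta^m\mathcal{E}\bar q=\mathcal{L}\frac{q^{(p),m}-\bar q}{\Delta t}.
\end{equation*}
Setting this equal to zero and invoking the invertibility of $\mathcal L$ (it is block-diagonal with the strictly positive lumped-mass entries $\int\varphi_\alpha^x\,\dd x\cdot\int\varphi_\beta^y\,\dd y$ on the diagonal) yields $q^{(p),m}=\bar q$, closing the induction.

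There is essentially no obstacle: the argument hinges only on the linearity of $T^1,T^2$ in the stage values together with the two trivial facts $\bar q-\bar q=0$ and $\mathcal E\bar q=0$, and on the invertibility of the lumped matrix $\mathcal L$, which is exactly the property that made the low-order operator matrix-free in the first place. The same proof applies verbatim whether $\mathcal E$ is the standard SUPG/OSS operator or its GFq counterpart, and whether $\mathcal A=\mathcal A_C$ (OSS) or $\mathcal A=\mathcal A_C+\mathcal A_{\mathrm{SU}}$ (SUPG), so the stationarity-preservation of the spatial operator is faithfully transferred to the fully discrete scheme.
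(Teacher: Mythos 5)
Your proof is correct and follows essentially the same route as the paper: induction on the DeC iteration index $p$, using $\mathcal{E}\bar{q}=0$, the frozen value $q^{(p),0}=\bar{q}$, and linearity of the operators to show each correction leaves $\bar{q}$ unchanged. The only cosmetic difference is that you argue from the unsimplified iteration $T^{1}(\underline q^{(p)})=T^{1}(\underline q^{(p-1)})-T^{2}(\underline q^{(p-1)})$ and invoke invertibility of $\mathcal{L}$ explicitly, whereas the paper starts from the already simplified update formula in which $\mathcal{L}^{-1}$ appears directly; the substance is identical.
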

\begin{proof}
	We proceed by induction on $p$, for all $m$, showing that ${q}^{(p),m}={q}_{n}=\bar{{q}}$. By definition of the DeC, for $p=0$ we set all ${q}^{(0),m}=\bar{{q}}$ and for $m=0$ we set all ${q}^{(p),0}=\bar{{q}}$.
	Then, for $p=1,\dots,P$ and $m=1,\dots,M$ we have that
	\begin{equation}
		 {q}^{(p),m} = {q}^{(p-1),m}  -\mathcal{L}^{-1}  \mathcal{A} ({q}^{(p-1),m}-{q}^0) -\Delta t \mathcal{L}^{-1} \sum_{r=0}^M \theta_r^m \mathcal{E} {q}^{(p-1),r}={q}^{(p-1),m} = \bar{{q}},
	\end{equation}
as for $p-1$ we have that $\mathcal{E} {q}^{(p-1),r}= 0$ for all $r=0,\dots,M$ and that ${q}^{(p-1),m}={q}^{0}=\bar{{q}}$ for all $m=1,\dots,M$. Hence, also ${q}_{n+1}={q}^{(P),M} = \bar{{q}}$.
\end{proof}

The consequence of this theorem is that for the SUPG-GFq and OSS-GFq discretizations $\mathcal{E}$, we know a class of steady states, hence, the DeC time integration method will preserve them. 
This allows us to easily converge towards the steady states, or to set up initial conditions that verify the condition $\mathcal{E}{q}=0$ and to preserve them.

\section{Numerical results} \label{sec:numerical}

In this section, we show the benefits of the proposed formulation through various numerical tests. In all computations  we have used Gauss--Lobatto points both for interpolation and  quadrature.
 
\subsection{Convergence analysis on a smooth oblique flow}
To show the arbitrarily high-order property of the SUPG and SUPG-GFq DeC-FEM methods of Section~\ref{sec:globalflux}, we use a smooth two-dimensional problem of an oblique wave on the square $[0,1]^2$ with periodic boundary conditions. Its analytical solution  for the linear acoustic equations \eqref{eq:acoustic} is
\begin{equation}
\begin{cases}
	u(x,y,t) = -\frac{1}{2c} \left(\cos( \alpha \xi(x,y)+ct )-\cos( \alpha \xi(x,y)-ct )\right) \cos(\theta),\\
	v(x,y,t) = -\frac{1}{2c} \left(\cos( \alpha \xi(x,y)+ct )-\cos( \alpha \xi(x,y)-ct )\right) \sin(\theta),\\
	p(x,y,t) = \frac{1}{2} \left(\cos( \alpha \xi(x,y)+ct )+\cos( \alpha \xi(x,y)-ct )\right),
\end{cases}	
\end{equation}
with $\alpha = \frac{2\pi}{\lambda\cos(\theta)} $ with $\theta = \frac{\pi}{4}$ and $\lambda = \frac14$.
We run the simulations up to $T=1$.
\begin{figure}
	\centering
		\begin{tikzpicture}
			\begin{axis}[
				xmode=log, ymode=log,
				xmin=2.6,xmax=1000,
				grid=major,
				xlabel={$N$},
				ylabel={$L^2$ Error of $u$},
				xlabel shift = 1 pt,
				ylabel shift = 1 pt,
				legend pos= north east,
				legend style={nodes={scale=0.6, transform shape}},
				width=.95\textwidth,
				height=.45\textwidth
				]
				
				\addplot[mark=square*,mark size=1.3pt,densely dashdotted, magenta]             table [y=err u, x=N, col sep=comma]{figures/Lin2D_oblique/errors_SUPG_ord2.csv};
				\addlegendentry{SUPG2}
				\addplot[mark=square*,dashed,mark size=1.3pt,magenta]  table [y=err u, x=N, col sep=comma]{figures/Lin2D_oblique/errors_SUPG_GF_ord2.csv};
				\addlegendentry{SUPG-GFq2}
				\addplot[magenta,domain=20:320]{500./x/x};
				\addlegendentry{order 2}		
				
				\addplot[mark=otimes*,mark size=1.3pt,densely dashdotted, blue]             table [y=err u, x=N, col sep=comma]{figures/Lin2D_oblique/errors_SUPG_ord3.csv};
				\addlegendentry{SUPG3}
				\addplot[mark=otimes*,dashed,mark size=1.3pt,blue]  table [y=err u, x=N, col sep=comma]{figures/Lin2D_oblique/errors_SUPG_GF_ord3.csv};
				\addlegendentry{SUPG-GFq3}
				\addplot[blue,domain=10:160]{500./x/x/x};
				\addlegendentry{order 3}		
				
				\addplot[mark=triangle*,mark size=1.3pt,densely dashdotted, violet]             table [y=err u, x=N, col sep=comma]{figures/Lin2D_oblique/errors_SUPG_ord4.csv};
				\addlegendentry{SUPG4}
				\addplot[mark=triangle*,dashed,mark size=1.3pt,violet]  table [y=err u, x=N, col sep=comma]{figures/Lin2D_oblique/errors_SUPG_GF_ord4.csv};
				\addlegendentry{SUPG-GFq4}
				\addplot[violet,domain=6:106]{500./x/x/x/x};
				\addlegendentry{order 4}			
				
				\addplot[mark=star,mark size=1.3pt,densely dashdotted, red]             table [y=err u, x=N, col sep=comma]{figures/Lin2D_oblique/errors_SUPG_ord5.csv};
				\addlegendentry{SUPG5}
				\addplot[mark=star,dashed,mark size=1.3pt,red]  table [y=err u, x=N, col sep=comma]{figures/Lin2D_oblique/errors_SUPG_GF_ord5.csv};
				\addlegendentry{SUPG-GFq5}
				\addplot[red,domain=5:80]{500./x/x/x/x/x};
				\addlegendentry{order 5}	
				
				\addplot[mark=diamond*,mark size=1.3pt,densely dashdotted,olive]             table [y=err u, x=N, col sep=comma]{figures/Lin2D_oblique/errors_SUPG_ord6.csv};
				\addlegendentry{SUPG6}
				\addplot[mark=diamond*,dashed,mark size=1.3pt,olive]  table [y=err u, x=N, col sep=comma]{figures/Lin2D_oblique/errors_SUPG_GF_ord6.csv};
				\addlegendentry{SUPG-GFq6}
				\addplot[olive,domain=4:64]{500./x/x/x/x/x/x};
				\addlegendentry{order 6}

				\addplot[mark=o,mark size=1.3pt,densely dashdotted,brown]             table [y=err u, x=N, col sep=comma]{figures/Lin2D_oblique/errors_SUPG_ord7.csv};
				\addlegendentry{SUPG7}
				\addplot[mark=o,dashed,mark size=1.3pt,brown]  table [y=err u, x=N, col sep=comma]{figures/Lin2D_oblique/errors_SUPG_GF_ord7.csv};
				\addlegendentry{SUPG-GFq7}
				\addplot[brown,domain=3:53]{500./x/x/x/x/x/x/x};
				\addlegendentry{order 7}			
			\end{axis}
	\end{tikzpicture}
	\caption{Oblique flow: convergence of $L^2$ error in $u$ w.r.t. the number of elements in $x$}\label{fig:convergence_oblique}
\end{figure}
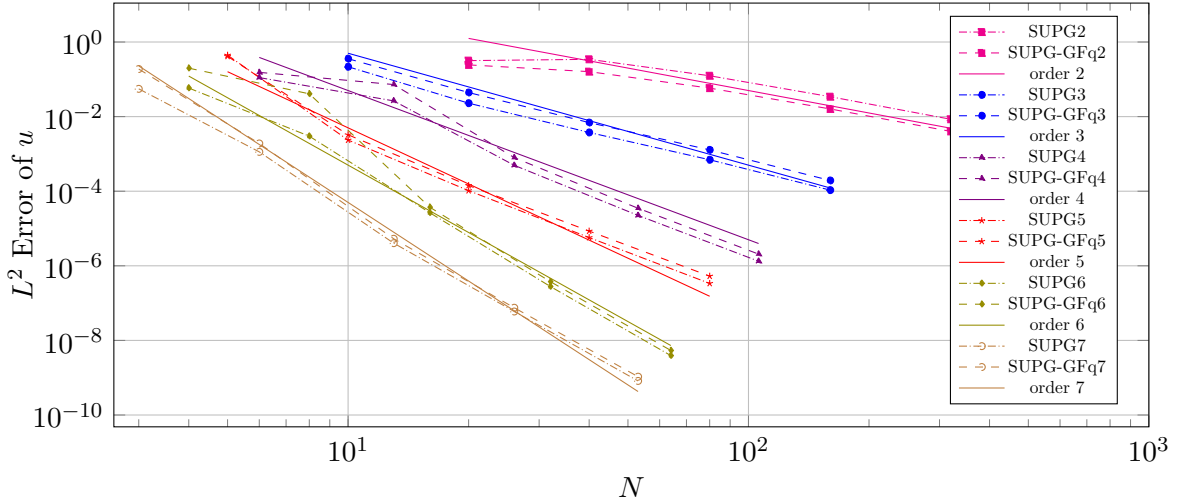

The errors are compared for the two methods in Figure~\ref{fig:convergence_oblique} (for $u$). The SUPG and SUPG--GF methods provide very similar errors and both of them converge with the expected order of accuracy. 


\subsection{Divergence-free solutions}\label{sec:div_free_vortex}
In this section, we will consider two analytical solutions that are divergence-free. Both have the velocity field of a vortex and a constant pressure. Both are be defined on the unit square with Dirichlet boundary conditions and centered in $(x_0,y_0)=(0.5,0.5)$.
The first one is a compactly supported solution in $\mathcal{C}^6$, while the second one is used for convergence purposes and is a $\mathcal{C}^\infty$ compactly supported function.

Both can be written as
\begin{equation}\label{eq:definition_any_vortex}
	\begin{cases}
		u(x,y) = f(\rho(x,y)\cdot(y-y_0)\\
		v(x,y) = -f(\rho(x,y))\cdot(x-x_0)\\
		p(x,y)=1
	\end{cases}
\end{equation}
with $\rho(x,y)=\frac{\sqrt{(x-x_0)^2+(y-y_0)^2}}{r_0}$ with $r_0=0.45$ the radius of the support. 
The first test case is defined by
\begin{equation}\label{eq:definition_C6_vortex}
	f(\rho) = \gamma (1+\cos(\pi \rho))^2,
\end{equation}
with $\gamma = \frac{12\pi\sqrt{0.981}}{r_0\sqrt{315\pi^2-2048}}$, see \cite{ricchiuto2021analytical} for the origin of these solutions.
The second is defined by
\begin{equation}\label{eq:definition_smooth_vortex}
	f(\rho) = 2\gamma e^{-\frac{1}{2(1-\rho)^2}} \sqrt{\frac{g}{r_0(1-\rho)^3}}
\end{equation}
with $g=9.81$, $\gamma = 0.2$ if $\rho<1$, else 0.

For the first vortex \eqref{eq:definition_C6_vortex}, let us consider some qualitative results on a coarse mesh. 
In Figure~\ref{fig:simul_vortex_ord2_comparison}, we compare the solution at time $T=100$ for $\mathbb Q^1$ SUPG and SUPG-GFq methods on a grid of $20\times 20$ cells. On the one hand, for long simulation times the SUPG scheme leads to vertical/horizontal artifacts that are not physical and the pressure does not converge to a constant state. On the other hand, the SUPG-GFq does not show this behavior and converges to the divergence-free state with constant pressure. In Figure~\ref{fig:simul_vortex_ord3_comparison}, we compare the pressures for $\mathbb Q^2$ schemes and observe the same behavior on a smaller scale. SUPG--GFq obtains a constant pressure up to machine precision, while SUPG has oscillations of the order of $10^{-9}$.

\begin{figure}
	\includegraphics[width=\textwidth, trim={0 0 0 30 },clip]{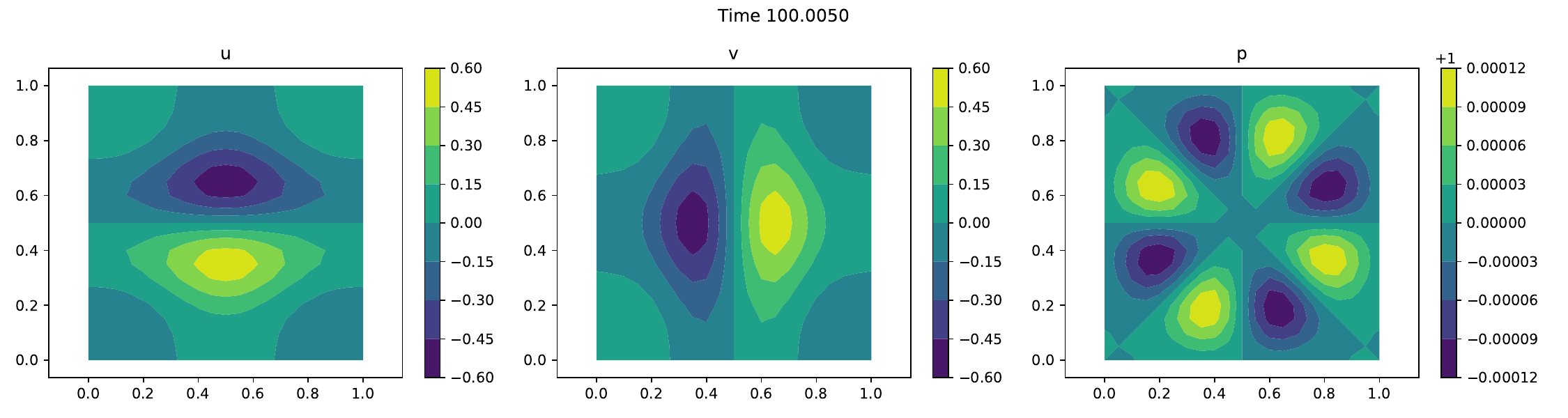}
	\includegraphics[width=\textwidth, trim={0 0 0 30 },clip]{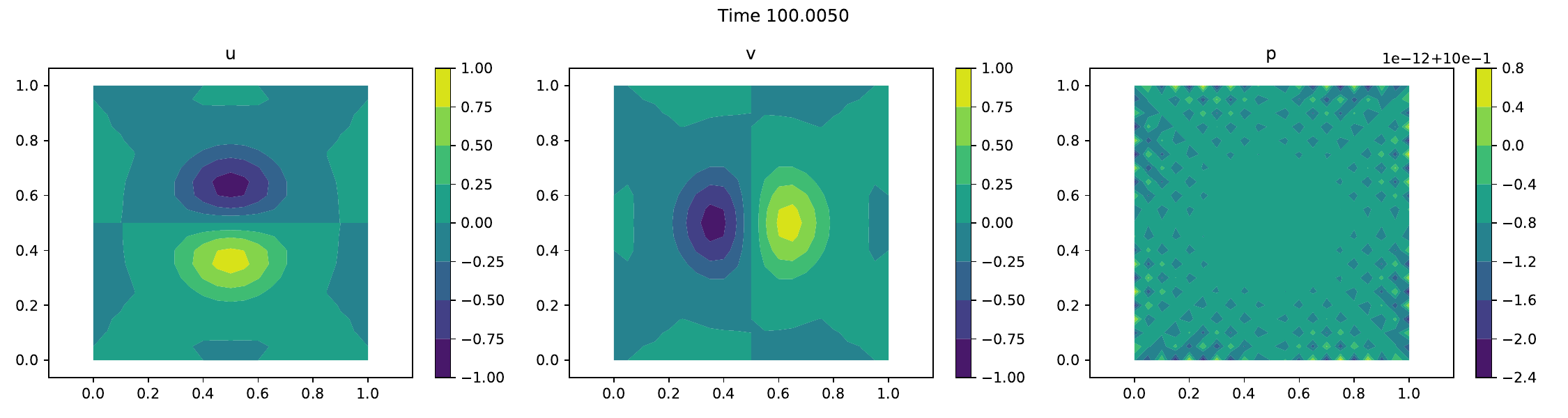}
	\caption{Simulation at time $T=100$ of the vortex \eqref{eq:definition_C6_vortex} with $20\times 20$ cells and $\mathbb P^1$ elements for the SUPG (top) and SUPG--GF (bottom) schemes}\label{fig:simul_vortex_ord2_comparison}
\end{figure}
\begin{figure}
	\centering
	\includegraphics[width=0.35\textwidth, trim={720 0 0 30 },clip]{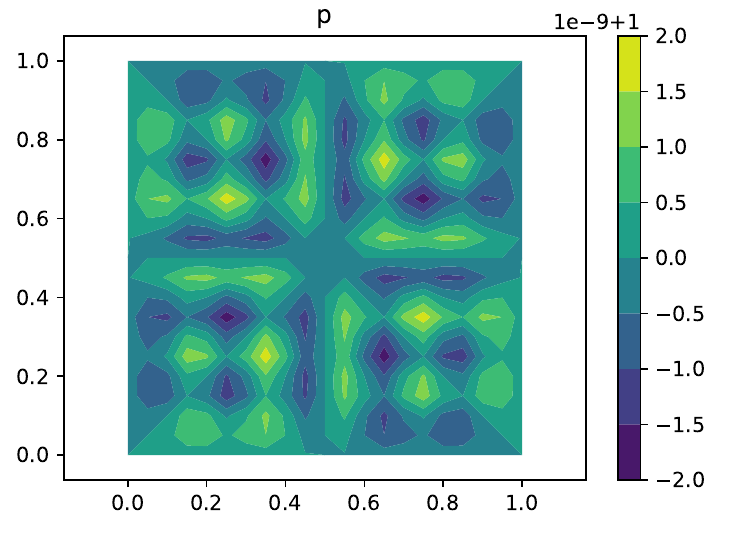}
	\includegraphics[width=0.35\textwidth, trim={720 0 0 30 },clip]{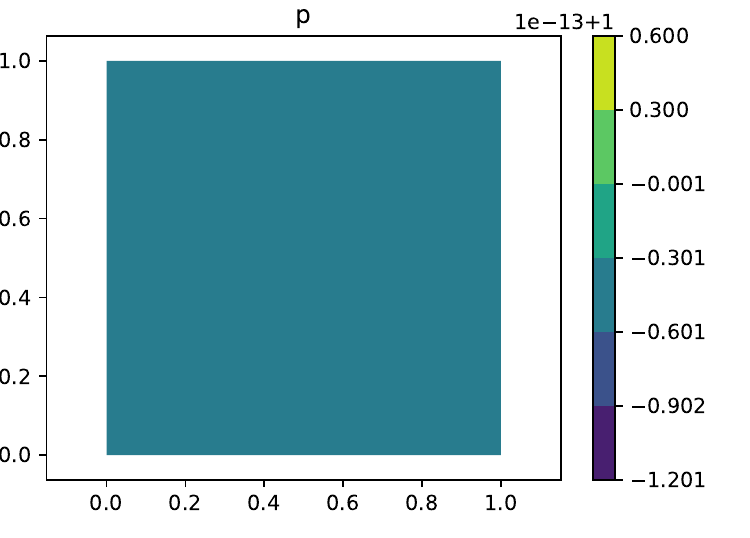}
	\caption{Simulation at time $T=100$ of the vortex \eqref{eq:definition_C6_vortex} (only $p$) with $10\times 10$ cells and $\mathbb P^2$ elements for the SUPG (left) and SUPG--GF (right) schemes}\label{fig:simul_vortex_ord3_comparison}
\end{figure}

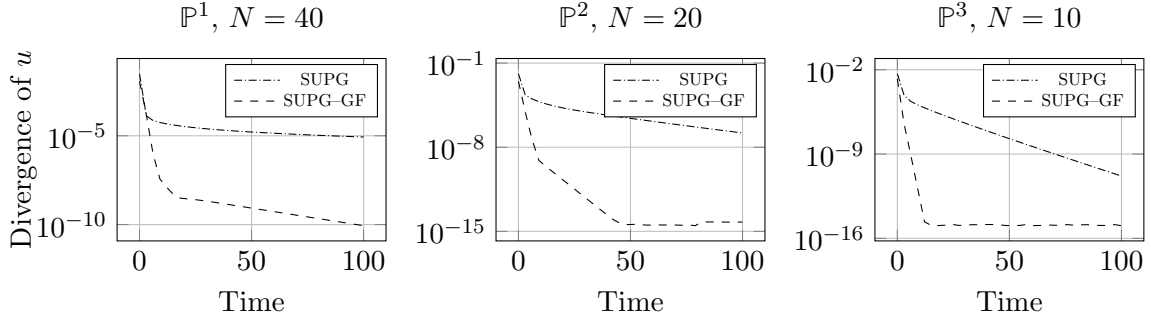
\begin{figure}
	\centering
		\begin{tikzpicture}
	\begin{axis}[
		ymode=log,
		grid=major,
		xlabel={Time},
		ylabel={Divergence of $u$},
		xlabel shift = 1 pt,
		ylabel shift = 1 pt,
		legend pos= north east,
		legend style={nodes={scale=0.6, transform shape}},
		width=.32\textwidth,
		height=.25\textwidth,
		title={$\mathbb P^1,\,N=40$}
		]			
		\addplot[densely dashdotted,black]  table [y=div, x=t, col sep=comma]{figures/LinAc2D_vortex_long/div_err_time_SUPG_ord_2_N_0040.csv};
		\addlegendentry{SUPG}
		\addplot[dashed, black]             table [y=div, x=t, col sep=comma]{figures/LinAc2D_vortex_long/div_err_time_SUPG_GF_ord_2_N_0040.csv};
		\addlegendentry{SUPG--GF}
	\end{axis}
\end{tikzpicture}
		\begin{tikzpicture}
	\begin{axis}[
		ymode=log,
		grid=major,
		xlabel={Time},
		xlabel shift = 1 pt,
		ylabel shift = 1 pt,
		legend pos= north east,
		legend style={nodes={scale=0.6, transform shape}},
		width=.32\textwidth,
		height=.25\textwidth,
		title={$\mathbb P^2,\,N=20$}
		]			
		\addplot[densely dashdotted,black]  table [y=div, x=t, col sep=comma]{figures/LinAc2D_vortex_long/div_err_time_SUPG_ord_3_N_0020.csv};
		\addlegendentry{SUPG}
		\addplot[dashed, black]             table [y=div, x=t, col sep=comma]{figures/LinAc2D_vortex_long/div_err_time_SUPG_GF_ord_3_N_0020.csv};
		\addlegendentry{SUPG--GF}
	\end{axis}
\end{tikzpicture}
		\begin{tikzpicture}
	\begin{axis}[
		ymode=log,
		grid=major,
		xlabel={Time},
		xlabel shift = 1 pt,
		ylabel shift = 1 pt,
		legend pos= north east,
		legend style={nodes={scale=0.6, transform shape}},
		width=.32\textwidth,
		height=.25\textwidth,
		title={$\mathbb P^3,\,N=10$}
		]			
		\addplot[densely dashdotted,black]  table [y=div, x=t, col sep=comma]{figures/LinAc2D_vortex_long/div_err_time_SUPG_ord_4_N_0013.csv};
		\addlegendentry{SUPG}
		\addplot[dashed, black]             table [y=div, x=t, col sep=comma]{figures/LinAc2D_vortex_long/div_err_time_SUPG_GF_ord_4_N_0013.csv};
		\addlegendentry{SUPG--GF}
	\end{axis}
\end{tikzpicture}
	\caption{Norm of the discrete divergence of $\vec{v}$ for SUPG (${D}_x \otimes M_y u + M_x\otimes {D}_y v$) and SUPG-GFq (${D}_x \otimes (D_yI_y) u + (D_x I_x)\otimes {D}_y v$) as a function of time for different orders of accuracy}\label{fig:disc_div_long_vortex}
\end{figure} 

In Figure~\ref{fig:disc_div_long_vortex}, we show the norm of the discrete divergence in time. For the SUPG-GFq schemes it decays exponentially until reaching machine precision (higher orders decay faster than low order schemes). For the SUPG schemes the decay is much slower and it depends heavily on the order of accuracy. In particular, the second order method is very inaccurate, while the fourth order scheme reaches small values of divergence at the final time.

\begin{table}[htbp]
	\centering
	\foreach \n in {2,...,6}{
		\pgfmathtruncatemacro\result{\n-1}
		\caption{Smooth vortex convergence results $\mathbb P^{\result}$: without GF (left) and with GF (right)}\label{tab:smooth_vortex_conv_\n}
		\begin{adjustbox}{max width=0.48\textwidth}
			\begin{tabular}{|c|ccc|ccc|}
				\hline
				$N$ & err $u$ & err $v$ & err $p$ & ord $u$ & ord $v$ & ord $p$ \\
				\hline
				\input{figures/LinAc2D_smooth_vortex/errors_SUPG_ord\n.tex}\\ \hline
			\end{tabular}
		\end{adjustbox}
		\begin{adjustbox}{max width=0.48\textwidth}
			\begin{tabular}{|c|ccc|ccc|}
				\hline
				$N$ & err $u$ & err $v$ & err $p$ & ord $u$ & ord $v$ & ord $p$ \\
				\hline
				\input{figures/LinAc2D_smooth_vortex/errors_SUPG_GF_ord\n.tex}\\ \hline
			\end{tabular}
		\end{adjustbox}
	}
\end{table}

\begin{figure}
	\centering
	\begin{tikzpicture}
		\begin{axis}[
			xmode=log, ymode=log,
			xmin=3,xmax=450,
			grid=major,
			xlabel={$N$},
			title={$L^2$ Error of $u$},
			xlabel shift = 1 pt,
			ylabel shift = 1 pt,
			legend pos= outer north east,
			legend style={nodes={scale=0.6, transform shape}},
			width=.5\textwidth,
			height=.4\textwidth
			]
			
			\addplot[mark=square*,mark size=1.3pt,densely dashdotted,magenta]             table [y=err u, x=N, col sep=comma]{figures/LinAc2D_smooth_vortex/errors_SUPG_ord2.csv};
			\addlegendentry{SUPG2}
			\addplot[mark=square*,dashed,mark size=1.3pt,magenta]  table [y=err u, x=N, col sep=comma]{figures/LinAc2D_smooth_vortex/errors_SUPG_GF_ord2.csv};
			\addlegendentry{SUPG-GFq2}
			\addplot[magenta,domain=70:320]{2./x/x};
			\addlegendentry{order 2}		
			
			\addplot[mark=otimes*,mark size=1.3pt,densely dashdotted,blue]             table [y=err u, x=N, col sep=comma]{figures/LinAc2D_smooth_vortex/errors_SUPG_ord3.csv};
			\addlegendentry{SUPG3}
			\addplot[mark=otimes*,dashed,mark size=1.3pt,blue]  table [y=err u, x=N, col sep=comma]{figures/LinAc2D_smooth_vortex/errors_SUPG_GF_ord3.csv};
			\addlegendentry{SUPG-GFq3}
			\addplot[blue,domain=50:160]{5./x/x/x};
			\addlegendentry{order 3}		
			
			\addplot[mark=triangle*,mark size=1.3pt,densely dashdotted,violet]             table [y=err u, x=N, col sep=comma]{figures/LinAc2D_smooth_vortex/errors_SUPG_ord4.csv};
			\addlegendentry{SUPG4}
			\addplot[mark=triangle*,dashed,mark size=1.3pt,violet]  table [y=err u, x=N, col sep=comma]{figures/LinAc2D_smooth_vortex/errors_SUPG_GF_ord4.csv};
			\addlegendentry{SUPG-GFq4}
			\addplot[violet,domain=40:160]{100./x/x/x/x};
			\addlegendentry{order 4}			
			
			\addplot[mark=star,mark size=1.3pt,densely dashdotted,red]             table [y=err u, x=N, col sep=comma]{figures/LinAc2D_smooth_vortex/errors_SUPG_ord5.csv};
			\addlegendentry{SUPG5}
			\addplot[mark=star,dashed,mark size=1.3pt,red]  table [y=err u, x=N, col sep=comma]{figures/LinAc2D_smooth_vortex/errors_SUPG_GF_ord5.csv};
			\addlegendentry{SUPG-GFq5}
			\addplot[red,domain=30:106]{400./x/x/x/x/x};
			\addlegendentry{order 5}	
			
			\addplot[mark=diamond*,mark size=1.3pt,densely dashdotted, olive]             table [y=err u, x=N, col sep=comma]{figures/LinAc2D_smooth_vortex/errors_SUPG_ord6.csv};
			\addlegendentry{SUPG6}
			\addplot[mark=diamond*,dashed,mark size=1.3pt,olive]  table [y=err u, x=N, col sep=comma]{figures/LinAc2D_smooth_vortex/errors_SUPG_GF_ord6.csv};
			\addlegendentry{SUPG-GFq6}
			\addplot[olive,domain=30:80]{1500./x/x/x/x/x/x};
			\addlegendentry{order 6}		
			\addplot[black,domain=30:64]{8000./x/x/x/x/x/x/x};
			\addlegendentry{order 7}		

		\end{axis}
	\end{tikzpicture}
	\begin{tikzpicture}
	\begin{axis}[
		xmode=log, ymode=log,
		xmin=2.6,xmax=450,
		grid=major,
		xlabel={$N$},
		title={Error ratio},
		xlabel shift = 1 pt,
		ylabel shift = 1 pt,
		width=.42\textwidth,
		height=.4\textwidth
		]
		
		\addplot[mark=square*,mark size=1.3pt,densely dashdotted,magenta] table [y=err u, x=N, col sep=comma]{figures/LinAc2D_smooth_vortex/ratios_ord2.csv};
		
		\addplot[mark=otimes*,mark size=1.3pt,densely dashdotted,blue] table [y=err u, x=N, col sep=comma]{figures/LinAc2D_smooth_vortex/ratios_ord3.csv};
		
		\addplot[mark=triangle*,mark size=1.3pt,densely dashdotted,violet]             table [y=err u, x=N, col sep=comma]{figures/LinAc2D_smooth_vortex/ratios_ord4.csv};
		
		\addplot[mark=star,mark size=1.3pt,densely dashdotted,red]             table [y=err u, x=N, col sep=comma]{figures/LinAc2D_smooth_vortex/ratios_ord5.csv};
		
		\addplot[mark=diamond*,mark size=1.3pt,densely dashdotted, olive]             table [y=err u, x=N, col sep=comma]{figures/LinAc2D_smooth_vortex/ratios_ord6.csv};
		
	\end{axis}
\end{tikzpicture}
	\caption{Smooth vortex: convergence of $L^2$ error of $u$ with respect to the number of elements in $x$ (left) and error ratios between SUPG and SUPG-GFq (right)}\label{fig:convergence_vortex}
\end{figure}
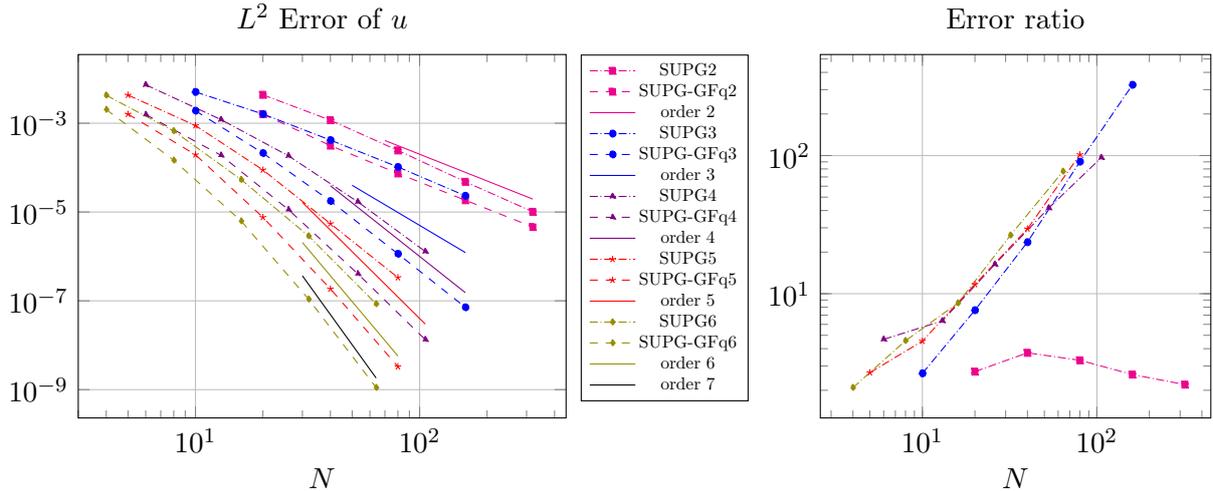

For the smooth vortex \eqref{eq:definition_smooth_vortex}, we test the convergence for arbitrarily high order at final time $T=1$. Although the solution is $\mathcal C^\infty$, the spatial derivatives of the solution are quite steep \cite{ricchiuto2021analytical}. Hence, it is not trivial to observe the right convergence rate for coarse meshes.
Nevertheless, in Tables~\ref{tab:smooth_vortex_conv_2}-\ref{tab:smooth_vortex_conv_6} we see that all SUPG-GFq solutions reach the expected order, while SUPG seems to struggle at this objective.  Moreover, the errors magnitude for the GF formulation are significantly smaller than for the classical one, as much as by two orders of magnitude. In Figure~\ref{fig:convergence_vortex}, we depict the errors for $u$ and the ratio of the SUPG errors and the SUPG-GFq ones. SUPG-GFq methods of lower order can outperform the SUPG method, see for instance how close SUPG-GFq-4 and SUPG-6 are.

\subsection{Perturbation of divergence--free solutions}
In this section, we study the behavior of the schemes when a perturbation is applied to an equilibrium state. 
Typically, one is not aware of the steady state before running the simulation, moreover, the class of steady states of \eqref{eq:acoustic} is quite rich and it is not fully determined by an external datum, like for 1D or 1D-like source-driven equilibria \cite{ciallella2022global}.

Several options are available to run such setups. 
\begin{enumerate}
	\item The first obvious possibility is to just use \textbf{analytical initial conditions} to start the simulations. This, however, might lead to a loss in accuracy in the early stages of the simulations.
	\item A second approach consists in running \textbf{long time simulations}, reaching the equilibrium solution and to add the perturbation afterwards. This approach guarantees that a discrete equilibrium is used as initial condition. This strategy is natural for simulations in which the equilibrium is not known \textit{a priori}. However, it can be expensive to run a long time simulation, just to have a short simulation of the perturbation.
	\item Finally, \textbf{discretely well-prepared initial data} can be first obtained and the perturbation added to those. We propose two approaches to reach such a goal: an optimization of the analytical initial condition constrained to the discrete div-free property and a line-by-line reconstruction of the initial conditions. The description of the two strategies is given in Section~\ref{sec:well_prepared_ic}.
\end{enumerate}
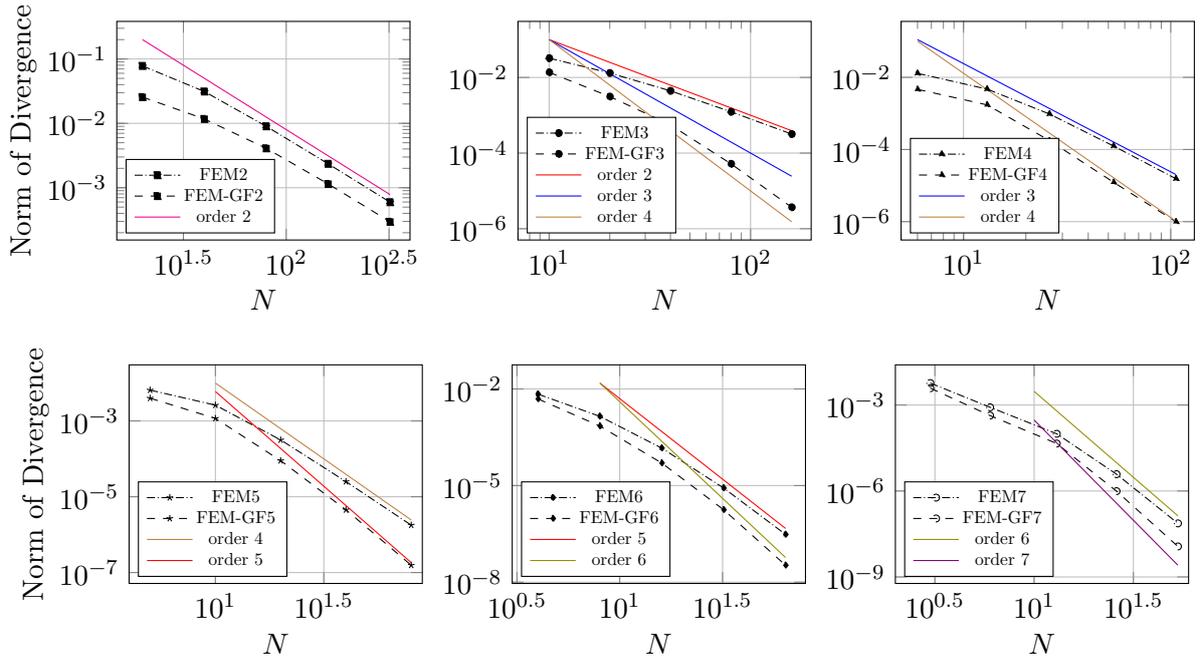
\begin{figure}
	\centering
	\begin{tikzpicture}
		\begin{axis}[
			xmode=log, ymode=log,
			xmin=15,xmax=400,
			grid=major,
			xlabel={$N$},
			ylabel={Norm of Divergence},
			xlabel shift = 1 pt,
			ylabel shift = 1 pt,
			legend pos= south west,
			legend style={nodes={scale=0.6, transform shape}},
			width=.34\textwidth,
			height=.28\textwidth
			]
			\addplot[mark=square*,mark size=1.3pt,densely dashdotted,black]             table [y=err_nbr, x=N, col sep=comma]{figures/LinAc2D_smooth_vortex/divergenceSimple_discretization_error_ord2.csv};
			\addlegendentry{FEM2}
			\addplot[mark=square*,dashed,mark size=1.3pt,black]  table [y=err_nbr, x=N, col sep=comma]{figures/LinAc2D_smooth_vortex/divergenceGF_discretization_error_ord2.csv};
			\addlegendentry{FEM-GF2}
			\addplot[magenta,domain=20:320]{80./x/x};
			\addlegendentry{order 2}
		\end{axis}
	\end{tikzpicture}
	\begin{tikzpicture}
		\begin{axis}[
			xmode=log, ymode=log,
			xmin=7,xmax=200,
			grid=major,
			xlabel={$N$},
			xlabel shift = 1 pt,
			ylabel shift = 1 pt,
			legend pos= south west,
			legend style={nodes={scale=0.6, transform shape}},
			width=.34\textwidth,
			height=.28\textwidth
			]
			\addplot[mark=otimes*,mark size=1.3pt,densely dashdotted,black]             table [y=err_nbr, x=N, col sep=comma]{figures/LinAc2D_smooth_vortex/divergenceSimple_discretization_error_ord3.csv};
			\addlegendentry{FEM3}
			\addplot[mark=otimes*,dashed,mark size=1.3pt,black]  table [y=err_nbr, x=N, col sep=comma]{figures/LinAc2D_smooth_vortex/divergenceGF_discretization_error_ord3.csv};
			\addlegendentry{FEM-GF3}
			\addplot[red,domain=10:160]{10./x/x};
			\addlegendentry{order 2}		
			\addplot[blue,domain=10:160]{100./x/x/x};
			\addlegendentry{order 3}		
			\addplot[brown,domain=10:160]{1000./x/x/x/x};
			\addlegendentry{order 4}			
		\end{axis}
	\end{tikzpicture}
	\begin{tikzpicture}
		\begin{axis}[
			xmode=log, ymode=log,
			xmin=5,xmax=130,
			grid=major,
			xlabel={$N$},
			xlabel shift = 1 pt,
			ylabel shift = 1 pt,
			legend pos= south west,
			legend style={nodes={scale=0.6, transform shape}},
			width=.34\textwidth,
			height=.28\textwidth
			]
			\addplot[mark=triangle*,mark size=1.3pt,densely dashdotted,black]             table [y=err_nbr, x=N, col sep=comma]{figures/LinAc2D_smooth_vortex/divergenceSimple_discretization_error_ord4.csv};
			\addlegendentry{FEM4}
			\addplot[mark=triangle*,dashed,mark size=1.3pt,black]  table [y=err_nbr, x=N, col sep=comma]{figures/LinAc2D_smooth_vortex/divergenceGF_discretization_error_ord4.csv};
			\addlegendentry{FEM-GF4}
			\addplot[blue,domain=6:106]{24./x/x/x};
			\addlegendentry{order 3}		
			\addplot[brown,domain=6:106]{130./x/x/x/x};
			\addlegendentry{order 4}			
		\end{axis}
	\end{tikzpicture}\\[6pt]
	\begin{tikzpicture}
		\begin{axis}[
			xmode=log, ymode=log,
			xmin=4,xmax=90,
			grid=major,
			xlabel={$N$},
			ylabel={Norm of Divergence},
			xlabel shift = 1 pt,
			ylabel shift = 1 pt,
			legend pos= south west,
			legend style={nodes={scale=0.6, transform shape}},
			width=.34\textwidth,
			height=.28\textwidth
			]			
			
			\addplot[mark=star,mark size=1.3pt,densely dashdotted, black]             table [y=err_nbr, x=N, col sep=comma]{figures/LinAc2D_smooth_vortex/divergenceSimple_discretization_error_ord5.csv};
			\addlegendentry{FEM5}
			\addplot[mark=star,dashed,mark size=1.3pt,black]  table [y=err_nbr, x=N, col sep=comma]{figures/LinAc2D_smooth_vortex/divergenceGF_discretization_error_ord5.csv};
			\addlegendentry{FEM-GF5}
			\addplot[brown,domain=10:80]{100./x/x/x/x};
			\addlegendentry{order 4}		
			\addplot[red,domain=10:80]{600./x/x/x/x/x};
			\addlegendentry{order 5}	
		\end{axis}
	\end{tikzpicture}
	\begin{tikzpicture}
		\begin{axis}[
			xmode=log, ymode=log,
			xmin=3,xmax=80,
			grid=major,
			xlabel={$N$},
			xlabel shift = 1 pt,
			ylabel shift = 1 pt,
			legend pos= south west,
			legend style={nodes={scale=0.6, transform shape}},
			width=.34\textwidth,
			height=.28\textwidth
			]
			\addplot[mark=diamond*,mark size=1.3pt, densely dashdotted, black]             table [y=err_nbr, x=N, col sep=comma]{figures/LinAc2D_smooth_vortex/divergenceSimple_discretization_error_ord6.csv};
			\addlegendentry{FEM6}
			\addplot[mark=diamond*,dashed,mark size=1.3pt,black]  table [y=err_nbr, x=N, col sep=comma]{figures/LinAc2D_smooth_vortex/divergenceGF_discretization_error_ord6.csv};
			\addlegendentry{FEM-GF6}
			\addplot[red,domain=8:64]{500./x/x/x/x/x};
			\addlegendentry{order 5}	
			\addplot[olive,domain=8:64]{4000./x/x/x/x/x/x};
			\addlegendentry{order 6}	
			
		\end{axis}
	\end{tikzpicture}
	\begin{tikzpicture}
		\begin{axis}[
			xmode=log, ymode=log,
			xmin=2,xmax=60,
			grid=major,
			xlabel={$N$},
			xlabel shift = 1 pt,
			ylabel shift = 1 pt,
			legend pos= south west,
			legend style={nodes={scale=0.6, transform shape}},
			width=.34\textwidth,
			height=.28\textwidth
			]			
			\addplot[mark=o,mark size=1.3pt,densely dashdotted, black]             table [y=err_nbr, x=N, col sep=comma]{figures/LinAc2D_smooth_vortex/divergenceSimple_discretization_error_ord7.csv};
			\addlegendentry{FEM7}
			\addplot[mark=o,dashed,mark size=1.3pt,black]  table [y=err_nbr, x=N, col sep=comma]{figures/LinAc2D_smooth_vortex/divergenceGF_discretization_error_ord7.csv};
			\addlegendentry{FEM-GF7}
			\addplot[olive,domain=10:53]{3000./x/x/x/x/x/x};
			\addlegendentry{order 6}	
			\addplot[violet,domain=10:53]{3000./x/x/x/x/x/x/x};
			\addlegendentry{order 7}			
		\end{axis}
	\end{tikzpicture}
	\caption{Smooth vortex: convergence of divergence operator on exact IC with respect to the number of elements in $x$}\label{fig:divergence_ic}
\end{figure}
To start the discussion with the analytical initial condition, we want to first check the amount of discrete divergence carried by the analytical IC. 
We display in Figure~\ref{fig:divergence_ic} the norm of the discrete divergence ($\tilde{D}_x u + \tilde{D}_y v$) of the analytical initial conditions of the $\mathcal{C}^\infty$ vortex \eqref{eq:definition_smooth_vortex}. We have used Gauss--Legendre polynomial $\mathbb P ^p$ and clearly see some super-convergence pattern. The divergence should decay with order $p$, but we observe in most of the cases $p+1$ convergence and for $\mathbb P^2$  we observe order $4$.
Comparing it with the classical FEM divergence, in Figure~\ref{fig:divergence_ic}, we observe that the GF divergence is much more accurate and gains one extra order of accuracy with respect to the classical method (except for $\mathbb P^1$ where also FEM gains it and for $\mathbb P^3$ where GF--FEM gains 2 order of accuracy).
Nevertheless, such an error is still too high to preserve a perturbation of an equilibrium.

Let us add a perturbation to the pressure in the form of a Gaussian centered in ${x}_p = (0.4,0.43)$ with scaling coefficient $r_0=0.1$ and radius defined as $\rho({x}) = \sqrt{\|{x} - {x}_p\|}/r_0$
\begin{equation}\label{eq:pressure_perturbation}
	\delta_p({x}) = \varepsilon  e^{-\frac{1}{2(1-\rho({x}))^2} + \frac12},
\end{equation}
until a final time $T=0.35$, obtaining the solution $({u}_p({x},t),p_p({x},t))$.

In Figure~\ref{fig:perturbation_analytical}, we do not apply any preprocessing, but we just run the analytical perturbed solution as initial condition. We compare different meshes and orders to understand how the schemes behave. The plot for order 4 and $26\times 26$ cells with SUPG-GFq shows the most accurate scheme we tested. For lower resolutions, we observe a clear advantage of the SUPG-GFq scheme over the SUPG only scheme, even if, it is easy to observe numerical noise also for SUPG-GFq in the slightly coarser mesh configurations.
\begin{figure}
	\centering
	\includegraphics[width=0.325\textwidth, trim={0 0 360 0}, clip]{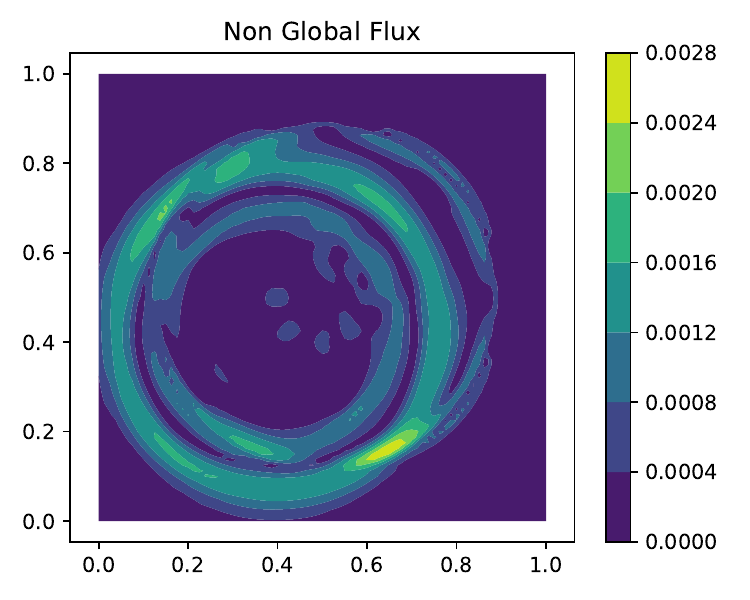}
	\includegraphics[width=0.325\textwidth, trim={0 0 360 0}, clip]{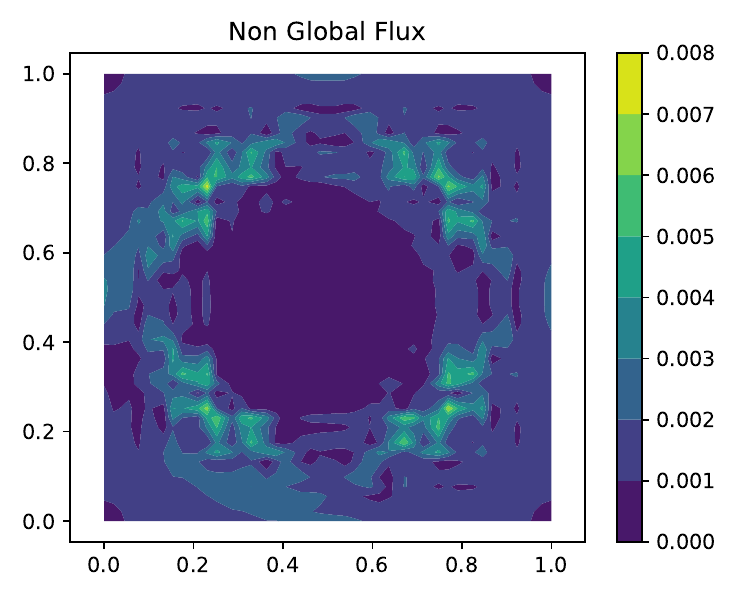}
	\includegraphics[width=0.325\textwidth, trim={0 0 360 0}, clip]{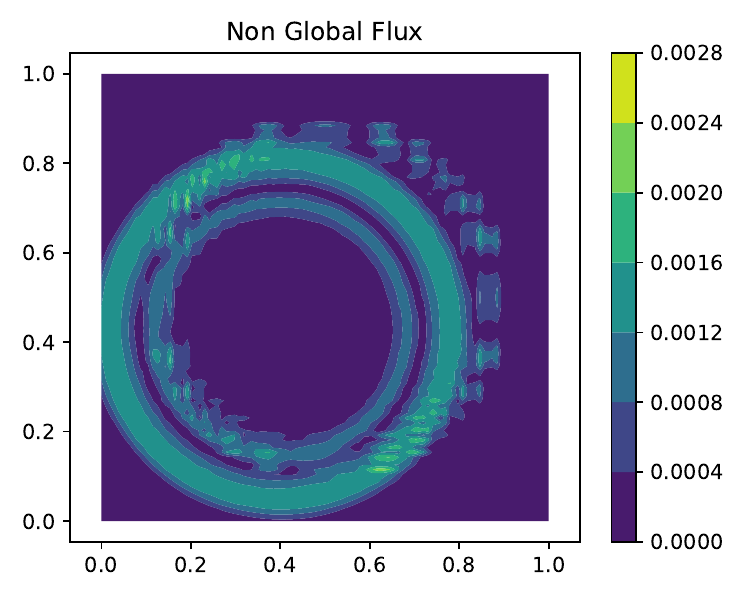}\\
	\includegraphics[width=0.325\textwidth, trim={360 0 0 0}, clip]{{figures/LinAc2D_smooth_vortex_an_perturbation/diff_vel_norm_ord2_N0080_pert_1.000e-02}.pdf}
	\includegraphics[width=0.325\textwidth, trim={360 0 0 0}, clip]{{figures/LinAc2D_smooth_vortex_an_perturbation/diff_vel_norm_ord4_N0013_pert_1.000e-02}.pdf}
	\includegraphics[width=0.325\textwidth, trim={360 0 0 0}, clip]{{figures/LinAc2D_smooth_vortex_an_perturbation/diff_vel_norm_ord4_N0026_pert_1.000e-02}.pdf}
	\caption{Perturbation($\varepsilon=10^{-2}$) test: analytical solution. Plot of $\lVert\vec{u}_{eq}-\vec{u}_p\rVert$, with $\vec{u}_{eq}$ the analytical equilibrium \eqref{eq:definition_smooth_vortex}. SUPG (top), SUPG-GFq (bottom). Left $\mathbb P^1$ with $80 \times 80$ cells, center $\mathbb P^3$ with 13 cells, right $\mathbb P^3$ with 26 cells}\label{fig:perturbation_analytical}
\end{figure}

\begin{figure}
	\centering
	\includegraphics[width=0.325\textwidth, trim={0 0 360 0}, clip]{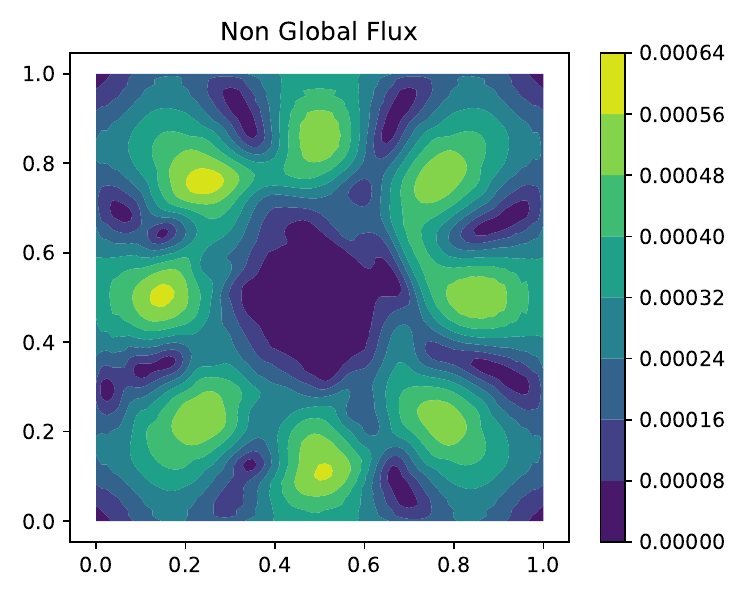}
	\includegraphics[width=0.325\textwidth, trim={0 0 360 0}, clip]{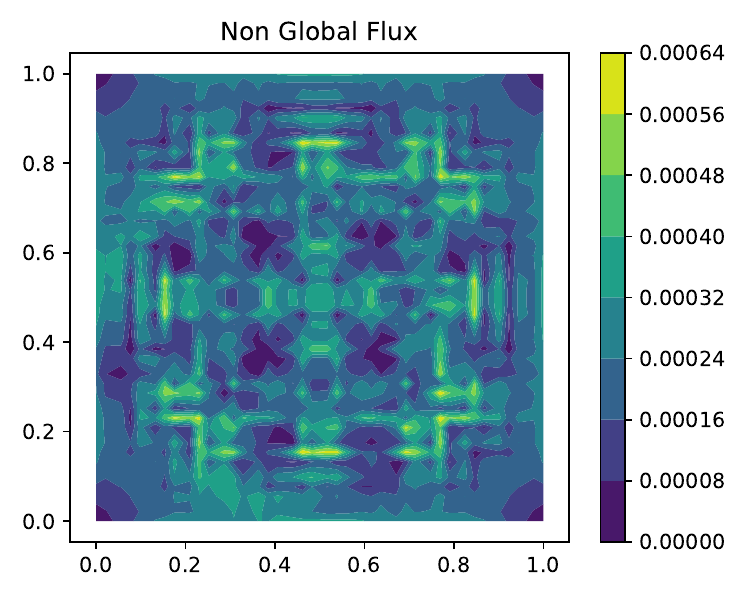}
	\includegraphics[width=0.325\textwidth, trim={0 0 360 0}, clip]{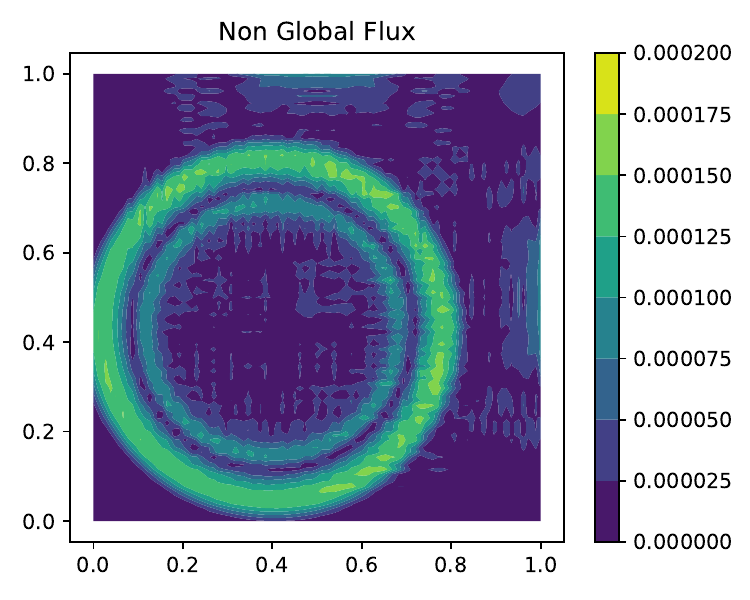}\\
	\includegraphics[width=0.325\textwidth, trim={360 0 0 0}, clip]{{figures/LinAc2D_smooth_vortex_int_perturbation/diff_vel_norm_ord2_N0080_pert_1.000e-03}.pdf}
	\includegraphics[width=0.325\textwidth, trim={360 0 0 0}, clip]{{figures/LinAc2D_smooth_vortex_int_perturbation/diff_vel_norm_ord4_N0013_pert_1.000e-03}.pdf}
	\includegraphics[width=0.325\textwidth, trim={360 0 0 0}, clip]{{figures/LinAc2D_smooth_vortex_int_perturbation/diff_vel_norm_ord4_N0026_pert_1.000e-03}.pdf}
	\caption{Perturbation($\varepsilon=10^{-3}$) test: line-by-line equilibrium solution, see Section~\ref{sec:well_prepared_ic}. Plot of $\lVert\vec{u}_{eq}-\vec{u}_p\rVert$, with $\vec{u}_{eq}$ the analytical equilibrium \eqref{eq:definition_smooth_vortex}. SUPG (top), SUPG-GFq (bottom). Left $\mathbb P^1$ with $80 \times 80$ cells, center $\mathbb P^3$ with 13 cells, right $\mathbb P^3$ with 26 cells}\label{fig:perturbation_integral}
\end{figure}
\begin{figure}
	\centering	
	\includegraphics[width=0.325\textwidth, trim={0 0 360 0}, clip ]{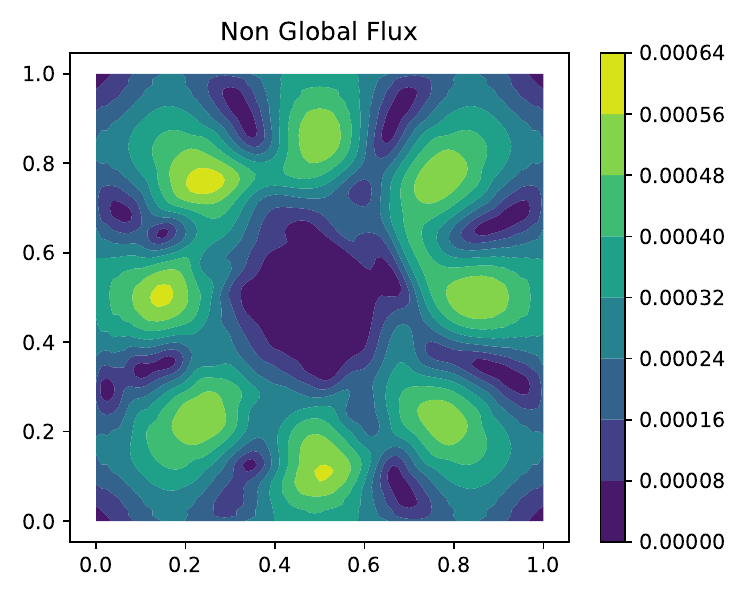}
	\includegraphics[width=0.325\textwidth, trim={0 0 360 0}, clip]{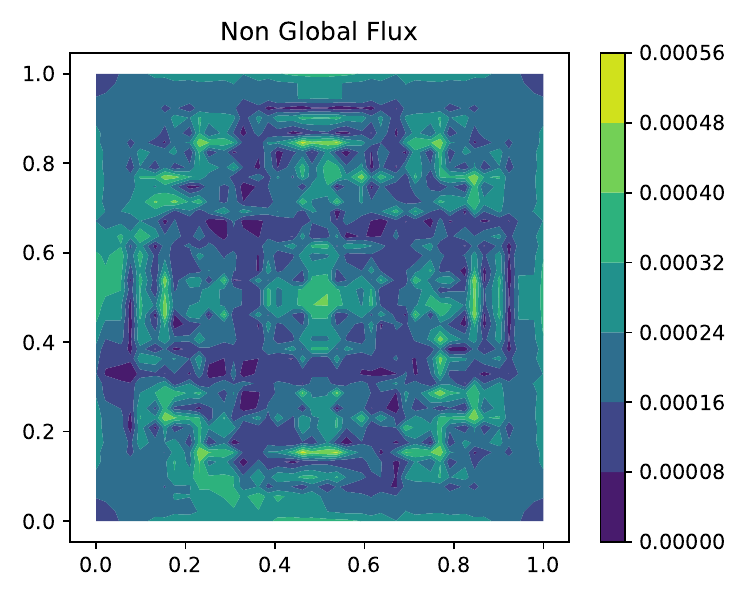}
	\includegraphics[width=0.325\textwidth, trim={0 0 360 0}, clip]{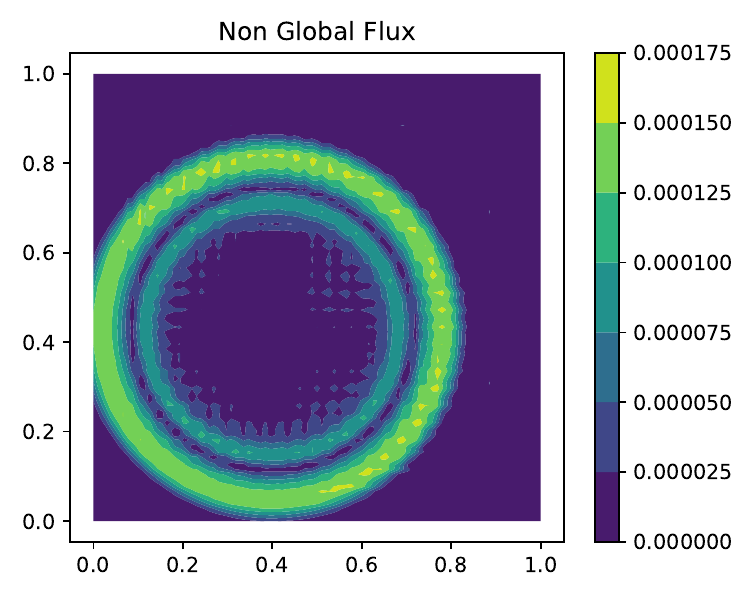}\\
	\includegraphics[width=0.325\textwidth, trim={360 0 0 0}, clip ]{{figures/LinAc2D_smooth_vortex_opt_perturbation/diff_vel_norm_ord2_N0080_pert_1.000e-03}.pdf}
	\includegraphics[width=0.325\textwidth, trim={360 0 0 0}, clip]{{figures/LinAc2D_smooth_vortex_opt_perturbation/diff_vel_norm_ord4_N0013_pert_1.000e-03}.pdf}
	\includegraphics[width=0.325\textwidth, trim={360 0 0 0}, clip]{{figures/LinAc2D_smooth_vortex_opt_perturbation/diff_vel_norm_ord4_N0026_pert_1.000e-03}.pdf}
	\caption{Perturbation($\varepsilon=10^{-3}$) test: optimal equilibrium solution, see Section~\ref{sec:well_prepared_ic}. Plot of $\lVert\vec{u}_{eq}-\vec{u}_p\rVert$, with $\vec{u}_{eq}$ the analytical equilibrium \eqref{eq:definition_smooth_vortex}. SUPG (top), SUPG-GFq (bottom). Left $\mathbb P^1$ with $80 \times 80$ cells, center $\mathbb P^3$ with 13 cells, right $\mathbb P^3$ with 26 cells}\label{fig:perturbation_optimal}
\end{figure}
\begin{figure}
\centering
\includegraphics[width=0.325\textwidth, trim={0 0 360 0}, clip]{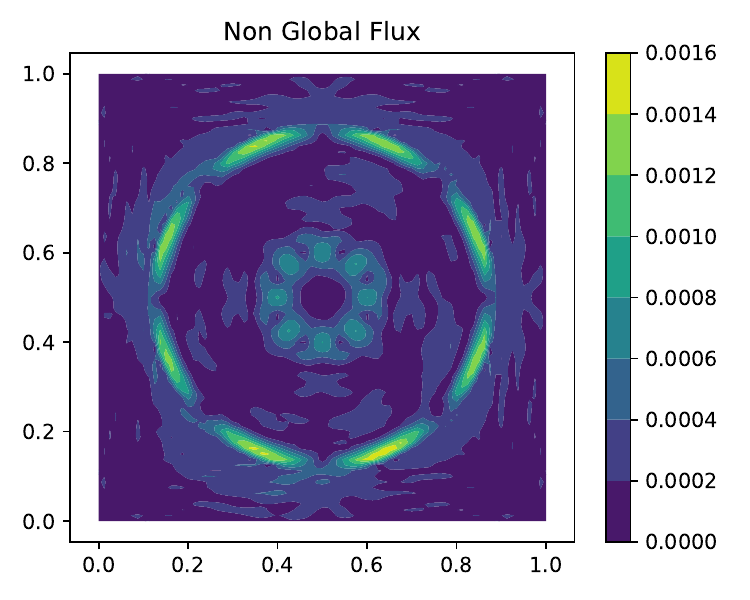}
\includegraphics[width=0.325\textwidth, trim={0 0 360 0}, clip]{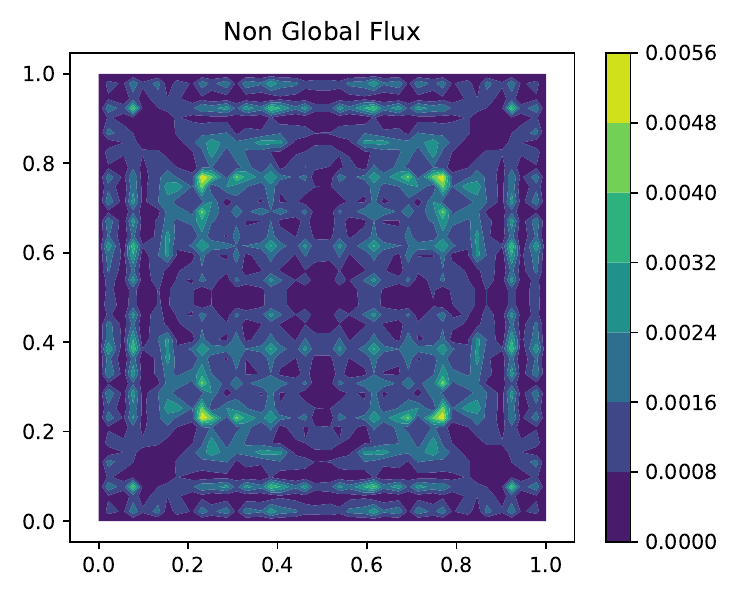}
\includegraphics[width=0.325\textwidth, trim={0 0 360 0}, clip]{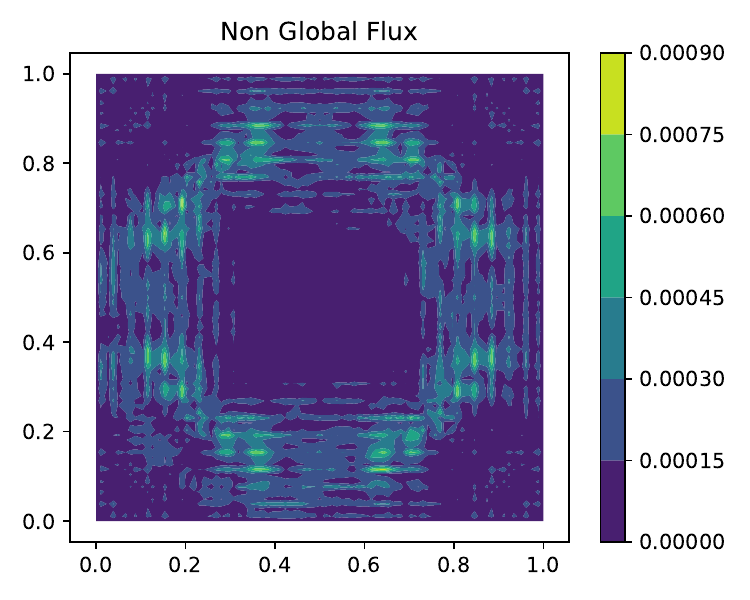}\\
\includegraphics[width=0.325\textwidth, trim={360 0 0 0}, clip]{{figures/LinAc2D_smooth_vortex_opt_perturbation/diff_vel_norm_OSS_ord2_N0080_pert_1.000e-03}.pdf}
\includegraphics[width=0.325\textwidth, trim={360 0 0 0}, clip]{{figures/LinAc2D_smooth_vortex_opt_perturbation/diff_vel_norm_OSS_ord4_N0013_pert_1.000e-03}.pdf}
\includegraphics[width=0.325\textwidth, trim={360 0 0 0}, clip]{{figures/LinAc2D_smooth_vortex_opt_perturbation/diff_vel_norm_OSS_ord4_N0026_pert_1.000e-03}.pdf}
\caption{Perturbation($\varepsilon=10^{-3}$) test: optimal equilibrium solution, see Section~\ref{sec:well_prepared_ic}. Plot of $\lVert\vec{u}_{eq}-\vec{u}_p\rVert$, with $\vec{u}_{eq}$ the analytical equilibrium \eqref{eq:definition_smooth_vortex}. OSS (top), OSS-GFq (bottom). Left $\mathbb Q^1$ with $80 \times 80$ cells, center $\mathbb Q^3$ with 13 cells, right $\mathbb Q^3$ with 26 cell}\label{fig:perturbation_optimal_OSS}
\end{figure}
\begin{figure}
	\centering
	\includegraphics[width=0.325\textwidth, trim={0 0 360 0}, clip]{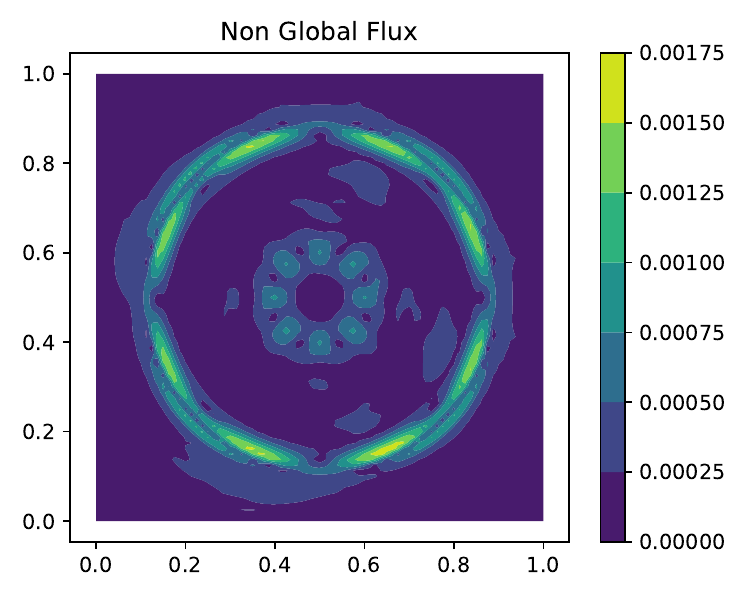}
	\includegraphics[width=0.325\textwidth, trim={0 0 360 0}, clip]{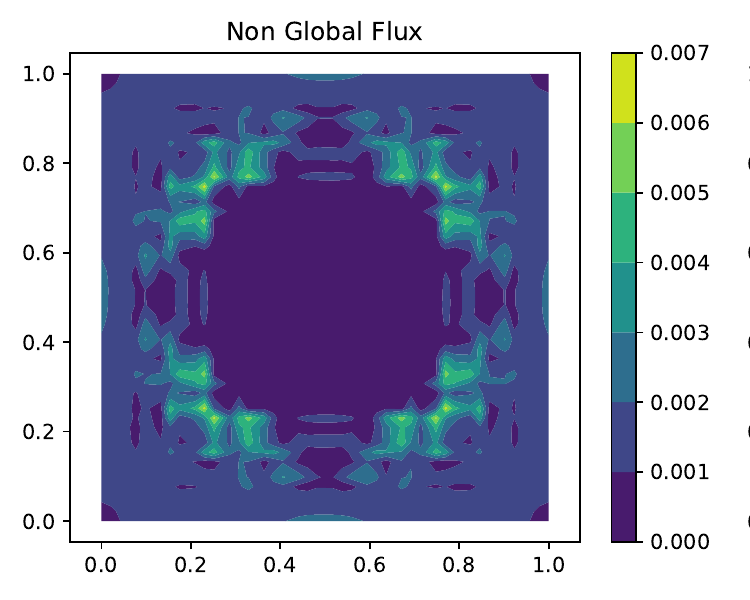}
	\includegraphics[width=0.325\textwidth, trim={0 0 360 0}, clip]{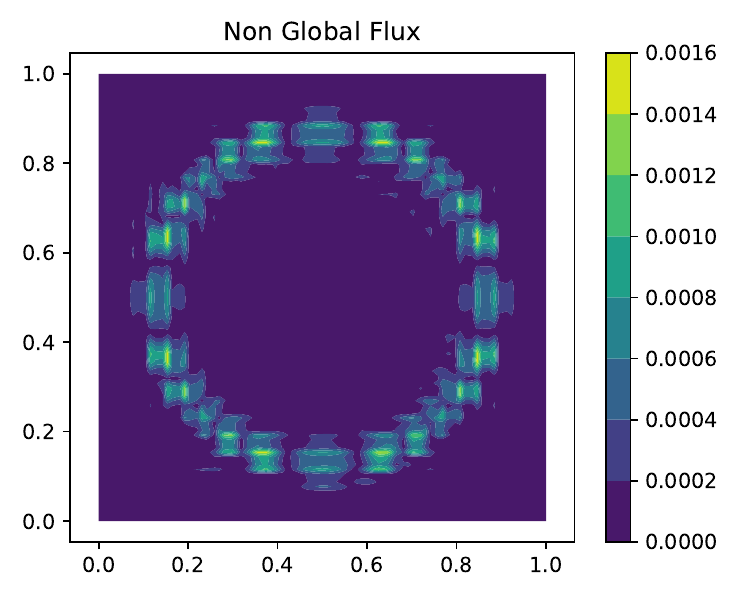}\\
	\includegraphics[width=0.325\textwidth, trim={360 0 0 0}, clip ]{{figures/LinAc2D_smooth_vortex_num_perturbation/diff_vel_norm_ord2_N0080_pert_1.000e-03}.pdf}
	\includegraphics[width=0.325\textwidth, trim={360 0 0 0}, clip]{{figures/LinAc2D_smooth_vortex_num_perturbation/diff_vel_norm_ord4_N0013_pert_1.000e-03}.pdf}
	\includegraphics[width=0.325\textwidth, trim={360 0 0 0}, clip]{{figures/LinAc2D_smooth_vortex_num_perturbation/diff_vel_norm_ord4_N0026_pert_1.000e-03}.pdf}
	\caption{Perturbation test ($\varepsilon=10^{-3}$): long time equilibrium solution. Plot of $\lVert{u}_{eq}-{u}_p\rVert$, with ${u}_{eq}$ the analytical equilibrium \eqref{eq:definition_smooth_vortex}. SUPG (top), SUPG-GFq (bottom). Left $\mathbb Q^1$ with $80 \times 80$ cells, center $\mathbb Q^3$ with 13 cells, right $\mathbb Q^3$ with 26 cells}\label{fig:perturbation_numerical}
\end{figure}
In the other setups studied, we can reduce the size of the perturbation and still be able to see its evolution. In Figure~\ref{fig:perturbation_integral}, we use the integral procedure to preprocess the data and find the equilibrium, see Section~\ref{sec:well_prepared_ic}, while in Figure~\ref{fig:perturbation_optimal}, we use the optimization procedure as described above. Once the equilibrium is obtained, we add the perturbation $\delta_p$ and let the simulation run. We observe that the results, even with a much smaller perturbation, are very accurate for SUPG-GFq even for very coarse grids. To achieve comparable results, the standard SUPG scheme requires a very fine mesh and high resolution to capture the motion of the perturbation.

In Figure~\ref{fig:perturbation_optimal_OSS}, we start from the optimal equilibrium obtained according to the procedure outlines in Definition \ref{def:opt-proj}, but we use the OSS stabilization technique. There are less precise results, with respect to SUPG, as boundaries in this case influence more the solutions, but the GF version is still very accurate for all the presented tests. We only use OSS in this test for brevity.

Finally, in Figure~\ref{fig:perturbation_numerical}, we use as initial solution the long-time simulation of the previous test ($T=100$) with the SUPG-GFq scheme that reaches convergence. We observe that adding a perturbation to such an initial condition leads to very clear results for the SUPG-GFq scheme.

\subsection{Riemann Problem}
\begin{figure}
	\includegraphics[width=\textwidth, trim={100 0 80 0}, clip]{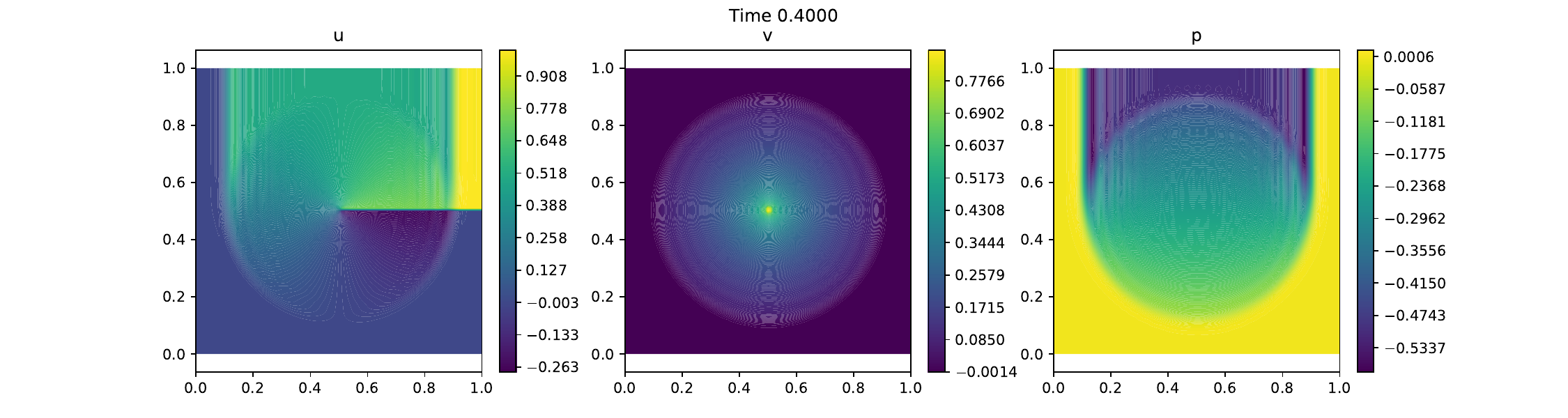}
	\caption{Riemann Problem. Simulation at time $T=0.4$ with $\mathbb P^2$ elements and $50\times 50$ cells with SUPG-GFq scheme}\label{fig:RP_simul}
\end{figure}
\begin{figure}
	\includegraphics[width=0.48\textwidth]{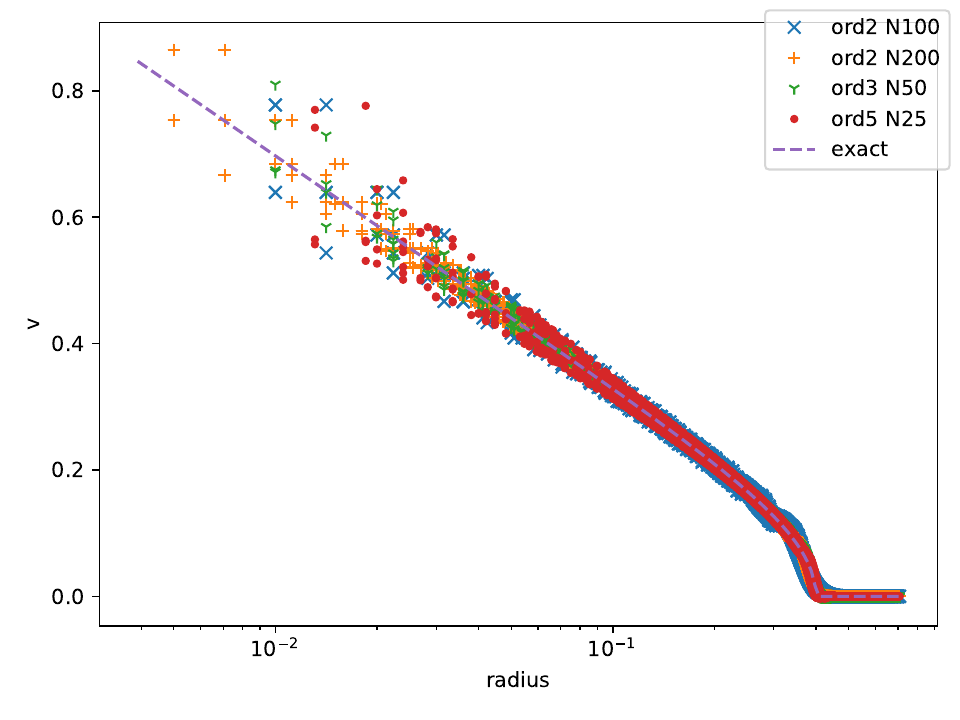}
	\includegraphics[width=0.48\textwidth]{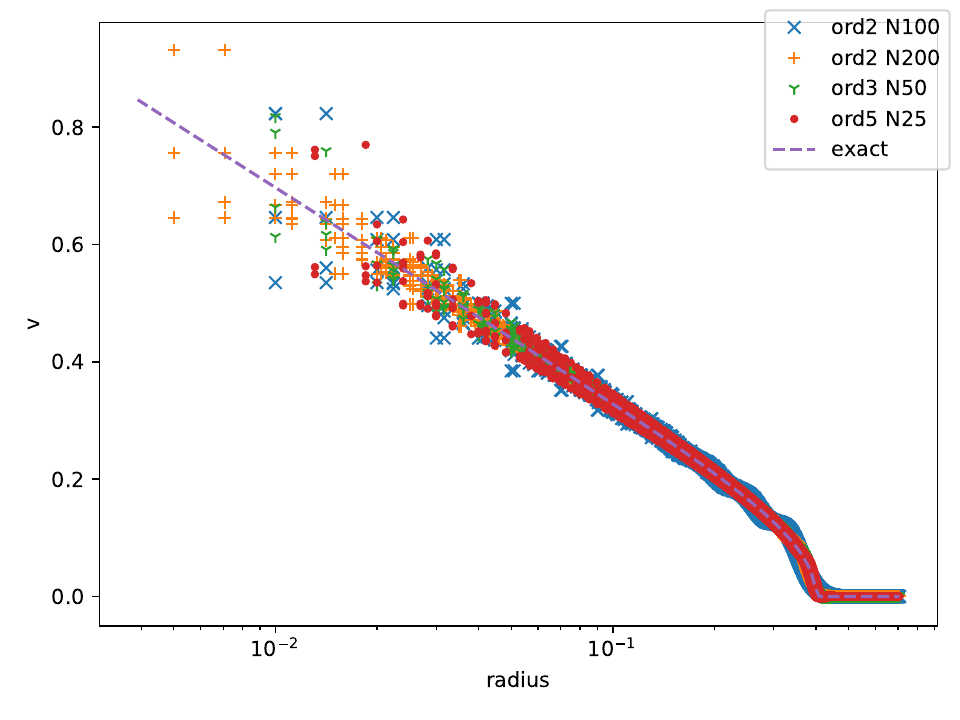}
	\caption{Riemann Problem. Distribution of the solution $v$ for different elements and meshes. Left SUPG scheme, right SUPG-GFq scheme}\label{fig:RP_distribution}
\end{figure}
We present in this section a two-dimensional Riemann problem (RP) centered in ${x}_0  =(0.5,0.5)$ on the unit square $\Omega=[0,1]^2$ \cite{barsukow2022exact}. The initial conditions are
\begin{equation}
	u({x})=
	\begin{cases}
		1, &\text{if }x>0.5\text{ and }y>0.5,\\
		0, &\text{else},
	\end{cases}\qquad
	v({x})=0,\qquad p({x})=0.
\end{equation}
It has been shown \cite{barsukow2022exact} that the perpendicular component $v$ has a logarithmic singularity in the center of the RP for $t>0$:
\begin{equation}
	\begin{split}
	&v(x,y,t)= \frac{1}{2\pi} \mathcal{L} \left(\frac{\sqrt{(x-x_0)^2+(y-y_0)^2}}{ct}\right),\\
	& \mathcal{L} (s):= \log \left(\frac{1+\sqrt{1-s^2}}{s}\right)  = - \log\left(\frac{s}{2}\right) -\frac{s^2}{4} +\mathcal{O}(s^4).
	\end{split}
\end{equation}
In Figure~\ref{fig:RP_simul}, we plot the solution for $\mathbb Q^2 $ elements with the SUPG-GFq method. Other resolutions and orders give qualitatively similar results. 
We can observe that some oscillations appear due to the Gibbs phenomenon at the discontinuities of the solution.

This can also be seen in Figure~\ref{fig:RP_distribution}, where the solution $v$ is plotted against the radius. We do not observe many differences between SUPG and SUPG-GFq.

\section{Conclusions and perspectives}

In this paper, we have studied the preservation of steady states for acoustics when using stabilized continuous finite elements.
We have shown that, despite their structure,  classical grad-div stabilizations as SUPG and OSS  are in general not stationarity preserving
due to an incompatibility between  the stabilizing term and of the Galerkin  one. Borrowing ideas  from the so-called Global Flux quadrature, 
 we have proposed  a new  framework allowing   to construct constraint-compatible stabilization operators, which additionally are non-vanishing for at least some of the unwanted spurious modes.
  We have characterized  rigorously   the discrete kernel of the schemes obtained in terms of stability and consistency, and  
  characterized the corresponding curl involutions   using Fourier symbols. 
Numerical results confirm the theoretical developments. Ongoing work involves the extension  to non-homogeneous systems, e.g. the linearized shallow water equations
with Coriolis, friction, mass, and other sources, as well as the extension to discontinuous polynomial approximations.
The application to non-linear systems (shallow water and Euler equations with gravity) is also under development.

\section*{Acknowledgments}
D.T. was supported by the Ateneo Sapienza projects 2022 “Approssimazione numerica
di modelli differenziali e applicazioni” and 2023 “Modeling, numerical treatment of hyperbolic
equations and optimal control problems”. DT is member of the INdAM Research
National Group of Scientific Computing (INdAM-GNCS).
M.R. is a member of the Cardamom team, Inria at University of Bordeaux.

\appendix

\section{Additional definitions and proofs}

\subsection{Proof of Theorem \ref{thm:charpolcomposition}} \label{sec:proofcharpolcomposition}
 The characteristic polynomials of $A$ and $B$ are
 \begin{align}
  (\mathbb F_{t_x}(A))_{r,s} &= \sum_{k \in \mathbb Z} \alpha^r_{k,s} t_x^{k} &
  (\mathbb F_{t_x}(B))_{r,s} &= \sum_{k \in \mathbb Z} \beta^r_{k,s} t_x^{k} 
 \end{align}
 while that of $AB$ is
 \begin{align}
  (\mathbb F_{t_x}(AB))_{r,s'} &= \sum_{k \in \mathbb Z} \sum_{s = 1}^K \sum_{k' \in \mathbb Z} \alpha^r_{k,s}  \beta^s_{k',s'} t_x^{k+k'} = \sum_{s = 1}^K  \sum_{k \in \mathbb Z}  \alpha^r_{k,s} t_x^{k}      \sum_{k' \in \mathbb Z}  \beta^s_{k',s'} t_x^{k'} =\sum_{s = 1}^K (\mathbb F_{t_x}(A))_{r,s}  (\mathbb F_{t_x}(B))_{s,s'}.
 \end{align}

\subsection{Proof of Theorem \ref{thm:highordercompositiontensor}} \label{sec:proofhighordercompositiontensor}
 Simply inserting the definitions:
 \begin{align*}
  ((A^x \otimes A^y)  (B^x \otimes B^y) q)_{ij}^{rt}
  &= \left( \sum_{(k,\ell) \in \mathbb Z^2} \sum_{s,p = 1}^K (\alpha^x)^{r}_{k,s}  (\alpha^y)^{t}_{\ell,p}q_{i+k,s;j+\ell,p}  \right ) \\
  &
  \left( \sum_{(k,\ell) \in \mathbb Z^2} \sum_{s,p = 1}^K (\beta^x)^{r,t}_{k,s;\ell,p} (\beta^y)^{t}_{\ell,p}  q_{i+k,s;j+\ell,p} \right )\\
  &\!\!\!\!\!\!\!\!\!\!\!\!\!\!\!\!\!\!\!\!\!\!\!\!\!\!\!\!\!\!\!\!\!\!\!\!\!= \sum_{(k,\ell) \in \mathbb Z^2} \sum_{(k',\ell') \in \mathbb Z^2}
  \sum_{s,p = 1}^K   \sum_{s',p' = 1}^K
  (\alpha^x)^{r}_{k,s} (\alpha^y)^{t}_{\ell,p}  (\beta^x)^{r}_{k',s'} (\beta^y)^{t}_{\ell',p'}  q_{i+k+k',s';j+\ell+\ell',p'} \\
  &\!\!\!\!\!\!\!\!\!\!\!\!\!\!\!\!\!\!\!\!\!\!\!\!\!\!\!\!\!\!\!\!\!\!\!\!\!=\sum_{k \in \mathbb Z} \sum_{k' \in \mathbb Z}
  \sum_{s = 1}^K   \sum_{s' = 1}^K
  (\alpha^x)^{r}_{k,s}  (\beta^x)^{r}_{k',s'}   
  \sum_{\ell \in \mathbb Z} \sum_{\ell' \in \mathbb Z}
  \sum_{p = 1}^K   \sum_{p' = 1}^K
  (\alpha^y)^{t}_{\ell,p}  (\beta^y)^{t}_{\ell',p'}  q_{i+k+k',s';j+\ell+\ell',p'} \\
  &\!\!\!\!\!\!\!\!\!\!\!\!\!\!\!\!\!\!\!\!\!\!\!\!\!\!\!\!\!\!\!\!\!\!\!\!\!= ((A^x  B^x) \otimes (A^y  B^y)q)_{ij}^{rt}.
 \end{align*}

\section{Fully discrete $T^2$ SUPG operators}\label{app:fully_discrete_L2}
As an example, we explicitly list here the $T^2$ operator for the SUPG stabilization.
We recall that indexes $m,r$ are devoted to subtimenodes, indexes $i,j$ are devoted to degrees of freedom, index $d$ is for dimension and $s,w$ for variable index.
We start first with the classical SUPG operator, corresponding to the matrix formulation \eqref{eq:L2SUPG}:
\begin{subequations}
\begin{align}
	\begin{split}
		T^{2,m}_{u}(\underline{{q}})=&M_x\otimes M_y \frac{u^{m}-u^{0}}{\dt} + D_x\otimes M_y  \sum_r \theta^m_r p^{r} +\\& \alpha h\left(  D^x \otimes M_y  \frac{p^{m}-p^{0}}{\dt}+ D_{x}^x\otimes M_y  \sum_r \theta^m_r u^{r} +
			D^x \otimes  D_y\sum_r \theta^m_r v^{r} \right),
		\end{split}\\
	\begin{split}
		T^{2,m}_{v}(\underline{{q}})=&M_x\otimes M_y \frac{v^{m}-v^{0}}{\dt} + M_x\otimes D_y \sum_r \theta^m_r p^{r} + \\&\alpha h\left(  M_x\otimes D^y \frac{p^{m}-p^{0}}{\dt}+ D_{x} \otimes D^y \sum_r \theta^m_r u^{r} +
			 M_x \otimes  D^y_y \sum_r \theta^m_r v^{r}\right),
		\end{split}\\
	\begin{split}
		T^{2,m}_{p}(\underline{{q}})=&M_x\otimes M_y \frac{p^{m}-p^{0}}{\dt} + D_x\otimes M_y  \sum_r \theta^m_r u^{r} +M_x\otimes D_y \sum_r \theta^m_r v^{r} + \\
		&\alpha h \left( D^x\otimes M_y \frac{u^{m}-u^{0}}{\dt}+M_x\otimes D^y \frac{v^{m}-v^{0}}{\dt}+ (D_{x}^x\otimes M_y +M_x \otimes D^y_y)\sum_r \theta^m_r p^{r}\right).
		\end{split}
\end{align}
\end{subequations}
\begin{subequations}
Now, we describe the SUPG-GFq $T^2$ operators, corresponding to the semidiscrete \eqref{eq:supg_GF}
\begin{align}
	\begin{split}
		T^{2,m}_{u}(\underline{{q}})= 
		&M_x\otimes M_y \frac{u^{m}-u^{0}}{\dt} + D_x\otimes M_y  \sum_r \theta^m_r p^{r} +\\& \alpha h \left(  D^x \otimes M_y  \frac{p^{m}-p^{0}}{\dt}+   {D}_{x}^x\otimes D_y I_y  \sum_r \theta^m_r u^{r} +
		   D^x_x I_x \otimes  {D}_y\sum_r \theta^m_r v^{r} \right),
	\end{split}
\end{align}

\begin{align}
	\begin{split}
		T^{2,m}_{v}(\underline{{q}})=
		&M_x\otimes M_y \frac{v^{m}-v^{0}}{\dt} + M_x\otimes D_y \sum_r \theta^m_r p^{r} +\\& \alpha h\left(  M_x\otimes D^y \frac{p^{m}-p^{0}}{\dt}+  {D}_{x} \otimes D^y_y I_y \sum_r \theta^m_r u^{r} +
		 D_x I_x \otimes {D}^y_y \sum_r \theta^m_r v^{r} \right),
	\end{split}
\end{align}
\begin{align}
	\begin{split}
		T^{2,m}_{p}(\underline{{q}})=
		&M_x\otimes M_y \frac{p^{m}-p^{0}}{\dt} + {D}_x\otimes D_y I_y   \sum_r \theta^m_r u^{r} + D_x I_x \otimes {D}_y \sum_r \theta^m_r v^{r} + \\
		&\alpha h \left(   D^x\otimes M_y  \frac{u^{m}-u^{0}}{\dt}+M_x\otimes D^y \frac{v^{m}-v^{0}}{\dt}+  (D_{x}^x\otimes M +M \otimes D^y_y)\sum_r \theta^m_r p^{r} \right).
	\end{split}
\end{align}
\end{subequations}

\section{Curl involution for OSS: definitions}\label{sec:curl_inv_OSS}
Here, we give implicitly the definition of $\mathcal K_u$ and $\mathcal K_v$.

\begin{equation*}	
	\begin{split}
	(\mathbb F_{t_y}(D_y M_y^2)& \mathbb F_{t_x}(M_x) )	\mathcal{K}_u = -\bigg\lbrace\left(\mathbb F_{t_y}(D_y)^2+\mathbb F_{t_y}(D_y^y M_y)\right) \bigg(\mathbb F_{t_y}(D_y)^2 \mathbb F_{t_x}(M_x)^2 -\mathbb F_{t_y}(M_y)\\
&\left(-\mathbb F_{t_y}(D_y^y) \mathbb F_{t_x}(M_x)^2-\mathbb F_{t_x}(D_x)^2 \mathbb F_{t_y}(M_y)-\mathbb F_{t_x}(D_x^x M_x) \mathbb F_{t_x,t_y}(M_y)\right)\bigg)\bigg\rbrace,\\
		( \mathbb F_{t_x}(D_x M_x^2) &\mathbb F_{t_y}(M_y)) \mathcal{K}_v = \bigg\lbrace \left(\mathbb F_{t_x}(D_x)^2+\mathbb F_{t_x}(D_x^x M_x)\right) \bigg(\mathbb F_{t_y}(D_y)^2 
		\mathbb F_{t_x}(M_x)^2-\mathbb F_{t_y}(M_y) \\& \bigg(-\mathbb F_{t_y}(D_y^y) \mathbb F_{t_x}(M_x)^2-\mathbb F_{t_x}(D_x)^2 \mathbb F_{t_y}(M_y)-\mathbb F_{t_x}(D_x^x M_x) \mathbb F_{t_y}(M_y)\bigg)\bigg)\bigg\rbrace .
	\end{split}
\end{equation*}
\normalsize

\section{One--dimensional Kernel characterization }\label{app:kernel_one_dim}

\begin{lemma}[Invertibility of mixed mass matrix]\label{lem:invertibility_mixed}
	Consider $\lbrace \hat{\varphi}_i \rbrace_{i=0}^K$ the set of Lagrangian polynomials generated by $K+1$ Gauss Lobatto points in $\mathbb P^K([0,1])$ and  $\lbrace \hat{\psi}_i \rbrace_{i=0}^{K-1}$ the set of Lagrangian polynomials generated by $K$ Gauss Lobatto points in $\mathbb P^{K-1}([0,1])$. Consider the matrix $A_{ij}:=\int_0^1 \hat{\varphi}_i \hat{\psi}_j \dd x$ for $i=0,\dots,K$ and $j=0,\dots,K-1$.
	Now, consider the square matrices obtained as the restriction of $A$ that we denote by  $B_{ij}=A_{ij}$ for $i=0,\dots,K-1$, $j=0,\dots,K-1$. Then, $B$ is invertible.
\end{lemma}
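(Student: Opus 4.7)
The plan is to show $\ker B = \{0\}$ directly. Suppose $\vec c = (c_0,\ldots,c_{K-1})^T \in \ker B$ and introduce the polynomial
$$
f(x) := \sum_{j=0}^{K-1} c_j\, \hat\psi_j(x) \in \mathbb P^{K-1}([0,1]).
$$
By definition of $B$, the assumption $B\vec c = 0$ reads $\int_0^1 \hat\varphi_i(x) f(x)\,dx = 0$ for all $i = 0,\ldots,K-1$. The goal will be to turn these $K$ orthogonality relations into $K$ pointwise zeros of $f$, which forces $f \equiv 0$ because $\deg f \le K-1$, and then use the linear independence of the Lagrange basis $\{\hat\psi_j\}$ to conclude $\vec c = 0$.

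The crucial step is to invoke the Gauss--Lobatto quadrature on the $K+1$ nodes $\hat x_0,\ldots,\hat x_K$: this rule is exact on $\mathbb P^{2K-1}([0,1])$, and the product $\hat\varphi_i f$ has degree at most $K + (K-1) = 2K-1$. Since $\hat\varphi_i(\hat x_k) = \delta_{ik}$ by construction of the Lagrange basis, we obtain exactly
$$
0 = \int_0^1 \hat\varphi_i(x) f(x)\,dx = \sum_{k=0}^K w_k\, \hat\varphi_i(\hat x_k)\, f(\hat x_k) = w_i\, f(\hat x_i), \qquad i = 0,\ldots,K-1,
$$
where $w_i > 0$ are the (strictly positive) Gauss--Lobatto weights. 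Hence $f$ vanishes at the $K$ distinct points $\hat x_0,\ldots,\hat x_{K-1}$, and since $\deg f \le K-1$ we conclude $f \equiv 0$. Linear independence of $\{\hat\psi_j\}_{j=0}^{K-1}$ then gives $\vec c = 0$, so $B$ is invertible.

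The only non-routine point is recognising that the choice of the same Gauss--Lobatto nodes to build $\{\hat\varphi_i\}$ and to supply a high-order quadrature is precisely what collapses the weak integral conditions $\int \hat\varphi_i f = 0$ into the pointwise conditions $f(\hat x_i) = 0$; everything else is degree counting. It is worth remarking that the argument is insensitive to \emph{which} row of $A$ is removed: dropping any single row of $A$ still leaves $K$ distinct Gauss--Lobatto nodes at which $f$ must vanish, which suffices to kill a polynomial of degree $\le K-1$. The hypothesis on $\{\hat\psi_j\}$ is used only to ensure linear independence, so the same proof strategy carries over to any basis of $\mathbb P^{K-1}$.
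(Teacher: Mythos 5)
Your proof is correct and follows essentially the same route as the paper: both exploit the exactness of the $(K+1)$-point Gauss--Lobatto rule on $\mathbb P^{2K-1}$ together with $\hat\varphi_i(\hat x_k)=\delta_{ik}$ to collapse the integrals $\int_0^1\hat\varphi_i f$ into weighted point values $w_i f(\hat x_i)$, and then conclude via uniqueness of a degree-$(K-1)$ polynomial determined at $K$ distinct nodes. The only difference is cosmetic: the paper phrases this as unique solvability of the interpolation system, while you phrase it as triviality of the kernel.
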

\begin{proof}
	Let us denote with $(x_i,w_i)$ for $i=0,\dots,K$ the $K+1$ Gauss--Lobatto quadrature nodes and weights, respectively. This quadrature formula is exact for polynomials of degree $2K-1$, hence,
	\begin{equation}\label{eq:B_quad}
		B_{ij}=\int_0^1 \hat{\varphi}_i(x) \hat{\psi}_j(x)\dd x = w_i \hat{\psi}_j(x_i).
	\end{equation}
	
	We want to show that the system $\sum_{j=1}^{K-1} B_{ij} q_j = r_i$ for $i=1,\dots,K-1$ is invertible. Using \eqref{eq:B_quad}, the system becomes
	\begin{equation}\label{eq:smaller_sys}
		\sum_{j=1}^{K-1} B_{ij} q_j = r_i \Longleftrightarrow \sum_{j=1}^{K-1} \hat{\psi}_j(x_i ) q_j = \frac{r_i}{w_i}.
	\end{equation}
	Now, the system \eqref{eq:smaller_sys} for $q_j$ with $j=0,\dots,K-1$ is equivalent to finding  a polynomial $q_h \in \mathbb P^{K-1}$ that interpolates the $K$ distinct point $\lbrace (0,0) \rbrace \cup \lbrace (x_i, r_i/w_i ) \rbrace_{i=1}^{K-1}$, which has one and only one solution. Hence, the matrix $B$ is invertible.
\end{proof}

\begin{proposition}[Kernel characterization of $\tilde{D}_x$]
	$\tilde{D}_x: \mathbb R^{N_x\times K }\to \mathbb R^{N_x \times K -1}$ has kernel of dimension one and it is generated by a function that is discontinuous at each cell interface, hence, not the constant function.
\end{proposition}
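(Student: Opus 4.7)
\textit{Proof plan.} My strategy is to decompose the kernel analysis cell-by-cell, use Gauss--Lobatto quadrature exactness to turn the integral conditions into point evaluations, and then propagate through cells via the inter-cell nodal test functions. The main tools would be a rank argument in the spirit of Lemma~\ref{lem:invertibility_mixed} and the exactness of Gauss--Lobatto quadrature on $\mathbb P^{2K-1}$.

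First I would localize. Since functions in $V^{K-1}_{\dx,b}$ carry no inter-cell continuity, the $K-1$ internal test functions $\varphi^x_{i,p}$, $p=1,\dots,K-1$, impose purely local constraints on $z|_{E_i^x}\in\mathbb P^{K-1}(E_i^x)$. Each integrand $\hat\varphi_p z|_{E_i^x}$ has degree exactly $K+(K-1)=2K-1$, so Gauss--Lobatto quadrature on the $K+1$ reference nodes is exact and reduces $\int\hat\varphi_p z|_{E_i^x}\,dx$ to $w_p z|_{E_i^x}(\hat x_p)$, by the Lagrange property $\hat\varphi_p(\hat x_q)=\delta_{pq}$ exactly as in Lemma~\ref{lem:invertibility_mixed}. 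Since $w_p\neq 0$, the internal conditions become $z|_{E_i^x}(\hat x_p)=0$ for $p=1,\dots,K-1$, which forces $z|_{E_i^x}=c_i z^\ast$ with $z^\ast(\hat x)\propto\prod_{p=1}^{K-1}(\hat x-\hat x_p)$ in reference coordinates and $c_i\in\mathbb R$. Because all $\hat x_p$ lie strictly inside $(0,1)$, both endpoint values $z^\ast(0)$ and $z^\ast(1)$ are automatically non-zero.

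Next I would couple adjacent cells through the interior nodal test functions. At each interior node $x_{i,0}=x_{i-1,K}$, the shared hat function equals $\hat\varphi_K$ on $E_{i-1}^x$ and $\hat\varphi_0$ on $E_i^x$, producing the scalar constraint $c_{i-1}\alpha+c_i\beta=0$, with $\alpha=\int_{E_{i-1}^x}\hat\varphi_K z^\ast\,dx$ and $\beta=\int_{E_i^x}\hat\varphi_0 z^\ast\,dx$. Applying Gauss--Lobatto exactness to these $2K-1$-degree integrands collapses them to $\alpha=\dx\,w_K z^\ast(1)$ and $\beta=\dx\,w_0 z^\ast(0)$, both non-zero. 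This yields the first-order recurrence $c_i=-(\alpha/\beta)c_{i-1}$, whose solution space is one-dimensional, and I can fix the generator by any choice $c_0\neq 0$. Combining with the local step, this proves $\dim\ker\tilde D_x=1$.

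Finally, to show the generator is discontinuous at every interior cell interface, I would compute the jump
\[
[z]_{x_{i,0}}=c_iz^\ast(0)-c_{i-1}z^\ast(1)=-\frac{c_{i-1}}{\beta}\bigl(\alpha z^\ast(0)+\beta z^\ast(1)\bigr)=-\frac{(w_0+w_K)\,z^\ast(1)}{w_0}\,c_{i-1},
\]
which is non-zero since $w_0,w_K>0$, $z^\ast(1)\neq 0$, and the recurrence keeps all $c_i$ non-zero once $c_0\neq 0$. In particular no non-trivial kernel element can be continuous, so the generator is not the constant function. The step that I expect to require the most care is certifying the non-vanishing of the boundary values of $z^\ast$ and of $\alpha,\beta$; the quadrature identities $\alpha=\dx\,w_Kz^\ast(1)$ and $\beta=\dx\,w_0 z^\ast(0)$ are the key device that turns these non-vanishing checks, as well as the jump computation, into trivial point-evaluations.
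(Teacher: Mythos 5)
Your proof is correct, and it takes a genuinely different route from the paper. The paper argues via linear algebra on the assembled matrix: it first uses Gauss--Lobatto exactness only at the interface test functions to get the relation between the two one-sided endpoint values (hence the constant cannot be in the kernel), and then establishes that $\dim\ker\tilde D_x=1$ by a rank argument on the block structure of $\tilde D_x$, whose diagonal blocks are (up to scaling) the mixed mass matrix $B$ of Lemma~\ref{lem:invertibility_mixed} and a block $Z$ obtained from $B$ by deleting a row. You instead exploit Gauss--Lobatto exactness at \emph{all} interior test functions: the $K-1$ internal tests per cell collapse to point evaluations and force $z|_{E_i}=c_i\,z^\ast$ with $z^\ast\propto\prod_{p=1}^{K-1}(\hat x-\hat x_p)$, and the interface tests become the scalar recurrence $c_i=-(\alpha/\beta)c_{i-1}$ with $\alpha=\dx\,w_K z^\ast(1)\neq0$, $\beta=\dx\,w_0 z^\ast(0)\neq0$; this is a fully constructive characterization of the kernel that bypasses Lemma~\ref{lem:invertibility_mixed} and the determinant/minor bookkeeping of the paper's matrix \eqref{eq:matrix_kernel}. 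What your approach buys is an explicit formula for the generator and for its jump at every interface, so the non-vanishing of the jump (and hence discontinuity at \emph{each} interface, which the paper treats more implicitly through the condition $q_{i,0}\neq0$) comes out for free from $w_0,w_K>0$, $z^\ast(0),z^\ast(1)\neq0$ and the recurrence preserving $c_i\neq0$; what the paper's rank argument buys is a template that it reuses almost verbatim for $\tilde D^x_x$ via Lemma~\ref{lem:invertibility_mixed_der}. All the individual steps you rely on check out: the integrands have degree $2K-1$ so the $(K+1)$-point Gauss--Lobatto rule is exact, the interior nodes $\hat x_1,\dots,\hat x_{K-1}$ lie strictly inside $(0,1)$ so $z^\ast$ does not vanish at the cell endpoints, and the counting ($N_x$ local parameters, $N_x-1$ interface constraints, no boundary tests since the test space is $V^K_{\dx,0}$) gives exactly a one-dimensional kernel, including the $K=1$ case where $z^\ast\equiv1$ and the generator is the checkerboard.
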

\begin{proof}
	Let us quickly recall the definition and notation of the matrix. Let us order the indexes of the degrees of freedom of $V^K_\dx(\Omega_\dx^x)$ with a unique index. Recall that for $i=0,\dots,N_x-1$ $\varphi_{i,k}|_{E^x_{i}} \in \mathbb P^K (E^x_{i})$ are the Lagrangian basis functions defined through the Gauss--Lobatto quadrature points. We also recall that for the degrees of freedom $r=0,\dots,K$ of the cell $E^x_{i}$ we assign a unique index the  $\alpha := iK+r$ with the equivalence $\varphi_{i,K}=\varphi_{i+1,0}$ for $i=0,\dots,N_x-2$.
	For $V_{\dx,0}^K(\Omega_\dx^x)$ the first and the last degrees of freedom are neglected, and we can define its basis as $\lbrace \varphi_\alpha \rbrace_{\alpha = 1}^{N_xK-1}$.
	For the broken polynomial space $V_{\dx,b}^{K-1}(\Omega_\dx^x)$, instead, for each Lagrangian basis function $\psi_{j,\ell}|_{E^x_{j}} \in  \mathbb P^{K-1} (E^x_{j})$ for $j=0,\dots,N_x-1$ and $k=1,\dots,K$, we define another unique index $\beta = jK+k$ with $\beta = 1,\dots, N_xK$.
	
	Then, the matrix is defined for $\alpha=1,\dots,N_xK-1$ and $\beta=1,\dots,N_xK$ as
	\begin{align}
		(\tilde{D}_x)_{\alpha;\beta} &:= \int_{\Omega^x_\dx} \varphi_{\alpha} (x)\psi_{\beta}(x)\dd x.
	\end{align}
	Now, we want to compute the kernel of these operators, so we are solving for each of them $N_xK-1$ equations for $\alpha=1,\dots,N_xK-1$
	\begin{equation}\label{eq:kernel_equation}
		\sum_{\beta=1}^{N_xK}(\tilde D_x)_{\alpha,\beta} q_\beta = 0.
	\end{equation}
	
	In this first part of the proof, we show that any basis the kernel of $\tilde{D}_x$ must have  discontinuities across cells if $q_{i,0}\neq 0$ for all $i=1,\dots,N_x-1$. Later, we will show that $\tilde{D}_x$ has full rank $N_x-1$ and hence that basis generates the whole kernel.
	
	Each term of the matrices can be computed as sum of integrals of polynomials of degree $2K-1$ at most, hence, it will be exactly computed by the Gauss--Lobatto quadrature formula with $K$ nodes which defines also the polynomials of degree $K$.
	For the matrix $\tilde{D}_x$, if we focus on the degrees of freedom $\alpha=(i,0)$ for $i=1,\dots,N_x-1$, and using the Lobatto quadrature formula with weights $\lbrace w_r\rbrace_{r=0}^K$, we obtain that \eqref{eq:kernel_equation} becomes: find $q_h \in V^{K-1}_{\dx,b}(\Omega^x_\dx)$ such that
	\begin{equation}\label{eq:eq_at_interfaces}
		0=\int_{E_{x,i-1}} \varphi_{i-1,K}(x) q_h(x) \dd x +\int_{E_{x,i}} \varphi_{i,0}(x) q_h(x) \dd x = 
		\Delta x \, w_0 (q_{i-1,K-1}+q_{i,0}).
	\end{equation}
	Here, clearly we have that $q_{i,0}=-q_{i-1,K-1}$ for $i=1,\dots,N_x-1$. Hence, the constant $1$ cannot be in the kernel of $\tilde{D}_x$.
	
	Now, to show the matrix is full ranked, we study its structure in \eqref{eq:matrix_kernel}, recalling that $\tilde{D}_x \in \mathbb R^{(N_xK-1)\times (N_x K)}$. 
\begin{align}\label{eq:matrix_kernel}
	\footnotesize
	\newcommand*{\AddLeftone}[1]{%
		\vadjust{%
			\vbox to 0pt{%
				\vss
				\llap{$%
					{#1}\left\{
					\vphantom{
						\begin{matrix}1\end{matrix}
					}
					\right.\kern-\nulldelimiterspace
					\kern0.5em
					$}%
				\kern0pt
			}%
		}%
	}
	\newcommand*{\AddLefttwo}[1]{%
	\vadjust{%
		\vbox to 0pt{%
			\vss
			\llap{$%
				{#1}\left\{
				\vphantom{
					\begin{matrix}1\\1\end{matrix}
				}
				\right.\kern-\nulldelimiterspace
				\kern0.5em
				$}%
			\kern0pt
		}%
	}%
	}
	\newcommand*{\AddLeftthree}[1]{%
	\vadjust{%
		\vbox to 0pt{%
			\vss
			\llap{$%
				{#1}\left\{
				\vphantom{
					\begin{matrix}1\\1\\1\end{matrix}
				}
				\right.\kern-\nulldelimiterspace
				\kern0.5em
				$}%
			\kern0pt
		}%
	}%
}
\left[ \begin{array}{ccc|ccc|ccc}
		&\multirow{2}{*}{$Z$}&& 0       & 0      & 0      & 0     & 0       & 0      \\
		\AddLefttwo{K-1}
		&&                 & 0       & 0      & 0      & 0     & 0  & 0        \\ \hline		
		*      & *      & *      &    \multicolumn{3}{c|}{\multirow{3}{*}{$B$}}   & 0      & 0     & 0  \\		
		0      & 0      & 0      & \multicolumn{3}{c|}{} & 0       & 0        & 0\\	\AddLeftthree{K}
		0      & 0      & 0      & \multicolumn{3}{c|}{} & 0       & 0        & 0\\ \hline
		0       & 0      & 0      & *     &*      & *      & \multicolumn{3}{c}{\multirow{3}{*}{$B$}} \\
		0      & 0      & 0      & 0      & 0       & 0      & \multicolumn{3}{c}{}                 \\
		\AddLeftthree{K}
		0      & 0      & 0      & 0   & 0       & 0           & \multicolumn{3}{c}{}
	\end{array}\right]
\end{align}
We observe that it contains matrices $B\in\mathbb R^{(K-1)\times (K-1)}$ proportional to the $B$ matrix of Lemma~\ref{lem:invertibility_mixed} and a $Z\in \mathbb R^{K\times(K-1)}$ matrix that is given by $B$ without the first row. Since $B$ is invertible, there exists a minor of $Z$ of dimension $(K-1)\times (K-1)$ that is invertible as well. If we remove the corresponding column also from the matrix $\tilde{D}_x$, the resulting is also invertible. 
This can be seen by computing the determinant and looking at the minors that contribute to that computation, i.e., only the $B$ matrices and of the invertible minor of $Z$. 
This means that $\tilde{D}_x$ is of full rank $N_xK-1$ and, hence, possesses a kernel of dimension one generated by a nonconstant vector.
\end{proof}

\begin{lemma}[Invertibility of mixed derivative matrix]\label{lem:invertibility_mixed_der}
	Consider $\lbrace \hat{\varphi}_i \rbrace_{i=0}^K$ the set of Lagrangian polynomials generated by $K+1$ Gauss Lobatto points in $\mathbb P^K([0,1])$ and  $\lbrace \hat{\psi}_i \rbrace_{i=0}^{K-1}$ the set of Lagrangian polynomials generated by $K$ Gauss Lobatto points in $\mathbb P^{K-1}([0,1])$. Consider the matrix $C_{ij}:=\int_0^1 \partial_x \hat{\varphi}_i \hat{\psi}_j \dd x$ for $i=0,\dots,K$ and $j=0,\dots,K-1$.
	Now, consider the square matrices obtained as the restriction of $C$ that we denote by  $D_{ij}=C_{ij}$ for $i=1,\dots,K$, $j=0,\dots,K-1$. Then, $D$ is invertible.
\end{lemma}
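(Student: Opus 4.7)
The plan is to show that the kernel of $D$, viewed as the $K\times K$ matrix representing a linear map, is trivial, and therefore $D$ is invertible. Suppose $q=(q_0,\dots,q_{K-1})^T$ satisfies $\sum_{j=0}^{K-1} D_{ij}\,q_j=0$ for all $i=1,\dots,K$. Setting $q_h := \sum_{j=0}^{K-1} q_j\,\hat\psi_j \in \mathbb P^{K-1}([0,1])$, this kernel condition reads
\begin{equation*}
\int_0^1 \partial_x \hat\varphi_i(x)\,q_h(x)\,dx = 0,\qquad i=1,\dots,K.
\end{equation*}

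First I would identify the span of the test functions. Because $\{\hat\varphi_i\}_{i=0}^K$ is the Lagrangian basis at the $K+1$ Gauss--Lobatto nodes $0=\hat x_0<\hat x_1<\dots<\hat x_K=1$, each $\hat\varphi_i$ with $i\ge 1$ vanishes at $\hat x_0=0$. Together with a dimension count, this yields
\begin{equation*}
\operatorname{span}\{\hat\varphi_i\}_{i=1}^K \;=\; \mathbb P^K_0([0,1]) \;:=\; \{\,p\in\mathbb P^K([0,1]) \,:\, p(0)=0\,\}.
\end{equation*}
The kernel condition above is therefore equivalent to $\int_0^1 \partial_x v(x)\,q_h(x)\,dx = 0$ for every $v\in\mathbb P^K_0([0,1])$.

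The key observation is that the derivative map $\partial_x : \mathbb P^K_0([0,1]) \to \mathbb P^{K-1}([0,1])$ is a bijection. Indeed, $\partial_x$ from $\mathbb P^K$ to $\mathbb P^{K-1}$ is surjective with kernel the constants, and restricting the domain to $\mathbb P^K_0$ removes exactly this one-dimensional kernel while keeping surjectivity, so the restriction is bijective between spaces of the same dimension $K$. Hence as $v$ varies over $\mathbb P^K_0$, the function $w:=\partial_x v$ varies over all of $\mathbb P^{K-1}$, and the kernel condition becomes
\begin{equation*}
\int_0^1 w(x)\,q_h(x)\,dx = 0,\qquad \forall\, w\in\mathbb P^{K-1}([0,1]).
\end{equation*}

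Since $q_h$ itself belongs to $\mathbb P^{K-1}([0,1])$, choosing $w=q_h$ gives $\int_0^1 q_h^2\,dx = 0$, which forces $q_h\equiv 0$. As $\{\hat\psi_j\}_{j=0}^{K-1}$ is a basis of $\mathbb P^{K-1}$, all coefficients $q_j$ vanish and $q=0$. The main obstacle is purely conceptual --- recognizing that the correct pairing is the polynomial duality $\partial_x : \mathbb P^K_0 \xrightarrow{\sim} \mathbb P^{K-1}$ rather than going through explicit Gauss--Lobatto quadrature formulas as was done for $B$ in Lemma~\ref{lem:invertibility_mixed}; once this is in place the argument is immediate and, unlike the mass-matrix case, no quadrature exactness is needed.
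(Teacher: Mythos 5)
Your proof is correct and takes essentially the same route as the paper's: the paper proves invertibility of $D^T$ by observing that $q_h=\sum_{j=1}^K q_j\,\partial_x\hat\varphi_j$ ranges over $\mathbb P^{K-1}([0,1])$ and invoking the invertibility of the $\mathbb P^{K-1}$ mass matrix, while you argue on the kernel of $D$ itself via $\operatorname{span}\{\hat\varphi_i\}_{i=1}^K=\mathbb P^K_0$, the bijection $\partial_x:\mathbb P^K_0\to\mathbb P^{K-1}$, and testing with $w=q_h$ — the same duality plus $L^2$ nondegeneracy, merely transposed. If anything, you spell out the bijectivity/linear-independence step that the paper leaves implicit when it re-expands $q_h$ in the $\hat\psi_j$ basis.
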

\begin{proof}
	Instead of proving the invertibility of $D$, we will show the invertibility of its transpose, which is 
	\begin{equation}\label{eq:B_quad2}
		(D^T)_{ij}=\int_0^1 \hat{\psi}_i(x)  \partial_x \hat{\varphi}_j(x) \dd x .
	\end{equation}
	Now, the system is invertible if there exists one and only solution to the system of equations for the unknown $\lbrace q_j \rbrace_{j=1}^{K}$ and the right hand side  $\lbrace r_i \rbrace_{i=0}^{K-1}$
	\begin{equation}
		\int_0^1 \hat{\psi}_i(x)  \partial_x \hat{\varphi}_j(x) q_j \dd x = r_i \Longleftrightarrow 
		\int_0^1 \hat{\psi}_i(x)  q_h(x) \dd x = r_i,
	\end{equation}
	with $q_h = \sum_{j=1}^{K} \partial_x \hat\varphi_j(x) \in \mathbb P^{K-1}([0,1])$ for every $\hat\psi_i \in \mathbb P^{K-1}$. Classically, $q_h\in \mathbb P^{K-1}([0,1])$ could be rewritten in an expansion of the $\hat\psi_j$ basis functions, obtaining on the left hand side the classical symmetric and positive definite mass matrix for $\mathbb P^{K-1}([0,1])$. This is invertible, hence $D$ is invertible.
\end{proof}

\begin{proposition}[Kernel characterization of $\tilde{D}_x^x$]
	$\tilde{D}^x_x: \mathbb R^{N_x\times K }\to \mathbb R^{N_x \times K -1}$ has kernel of dimension one and it is generated by the constant function $1$.
\end{proposition}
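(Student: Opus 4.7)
The plan is to first verify by integration by parts that the constant function lies in $\ker \tilde{D}_x^x$, then to show the kernel is exactly one-dimensional through a cell-by-cell argument using Lemma~\ref{lem:invertibility_mixed_der}, and finally to glue the per-cell constants together via interface test functions.

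For the first step, for any basis function $\varphi_\alpha \in V^K_{\Delta x, 0}$ one has
\[
(\tilde{D}_x^x \, 1)_\alpha = \int_{\Omega^x_{\Delta x}} \partial_x \varphi_\alpha(x) \cdot 1 \, dx = \varphi_\alpha(x_{\max}) - \varphi_\alpha(x_{\min}) = 0,
\]
because $\varphi_\alpha$ vanishes on $\partial\Omega^x_{\Delta x}$. Since $1 \in V^{K-1}_{\Delta x, b}$, we have $\dim \ker \tilde{D}_x^x \geq 1$.

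For the second step, let $z_h \in \ker \tilde{D}_x^x$ and fix a cell $E_i^x$. I would test the kernel equation against the $K-1$ purely interior basis functions $\varphi_{i,1}, \dots, \varphi_{i,K-1}$, which are supported in $E_i^x$ and vanish at its two endpoints. This yields a local $(K-1)\times K$ linear system for the coefficients $(z_{i,1}, \dots, z_{i,K})$ of $z_h|_{E_i^x}$ whose matrix coincides (after mapping to the reference element) with the submatrix of $C$ from Lemma~\ref{lem:invertibility_mixed_der} obtained by keeping rows $1,\dots,K-1$. Since the lemma asserts that rows $1,\dots,K$ of $C$ form an invertible $K\times K$ matrix, this $(K-1)\times K$ restriction has maximal rank $K-1$, hence its null space is one-dimensional. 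The coefficient vector $(1,\dots,1)^T$ of the constant function lies in this null space, since $\int_{E_i^x} \partial_x \varphi_{i,r}\, dx = \varphi_{i,r}(x_{i,K}) - \varphi_{i,r}(x_{i,0}) = 0$ for $r=1,\dots,K-1$. Therefore the local null space is exactly the constant direction, and $z_h|_{E_i^x} \equiv c_i$ for some $c_i \in \mathbb{R}$.

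Finally, I would tie the per-cell constants together using the interface rows. Testing against $\varphi_{i,0}$ (supported on $E_{i-1}^x \cup E_i^x$) for $i = 1, \dots, N_x-1$, and using $z_h|_{E_j^x} = c_j$ together with the Lagrange endpoint values of $\varphi_{i,0}$,
\[
0 = c_{i-1} \int_{E_{i-1}^x} \partial_x \varphi_{i,0}\, dx + c_i \int_{E_i^x} \partial_x \varphi_{i,0}\, dx = c_{i-1}\cdot 1 + c_i \cdot (-1),
\]
whence $c_i = c_{i-1}$ for all $i$. All the $c_i$ therefore coincide, $z_h$ is globally constant, and $\ker \tilde{D}_x^x = \mathrm{span}(1)$. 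The one delicate point is identifying the per-cell null space as the constant direction, which combines Lemma~\ref{lem:invertibility_mixed_der} (for the dimension count) with the vanishing of bubble functions at cell boundaries (to confirm the constant is annihilated); once these are in place, the interface coupling collapses to a one-line telescoping identity.
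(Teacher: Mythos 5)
Your proof is correct, and it is organized differently from the paper's. The paper argues globally on the assembled matrix: after checking that $1$ is in the kernel, it displays the block structure of $\tilde D_x^x$, invokes Lemma~\ref{lem:invertibility_mixed_der} for the diagonal blocks $D$, extracts an invertible $(K-1)\times(K-1)$ minor of the truncated last block $E$, and concludes that deleting the corresponding column leaves an invertible $(N_xK-1)\times(N_xK-1)$ matrix, hence full rank and a one-dimensional kernel. You instead characterize kernel elements directly: testing against the $K-1$ interior (bubble) functions of each cell gives a local $(K-1)\times K$ system whose matrix consists of rows $1,\dots,K-1$ of $C$; these are a subset of the rows of the invertible $D$, hence linearly independent, so the local null space is one-dimensional, and since the constant vector is annihilated (interior test functions vanish at the cell endpoints), any kernel element is cellwise constant. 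The interface rows then telescope, $c_{i-1}-c_i=0$, forcing a global constant. Both arguments hinge on the same key lemma, but yours uses it only for linear independence of a row subset and replaces the paper's somewhat terse minor/determinant bookkeeping ("the determinant is not 0 studying the relevant minors") with an explicit local-kernel-plus-gluing description; this makes the structure of the kernel visible and adapts easily to other boundary conditions, while the paper's version yields the rank count in one stroke from the matrix picture. Your endpoint evaluations $\int_{E_{i-1}^x}\partial_x\varphi_{i,0}=1$ and $\int_{E_i^x}\partial_x\varphi_{i,0}=-1$ and the row count $(K-1)N_x+(N_x-1)=N_xK-1$ are consistent with the paper's setting, so no step is missing.
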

\begin{proof}
	The matrix is defined for $\alpha=1,\dots,N_xK-1$ and $\beta =1,\dots,N_xK$ (with the notation of Theorem~\ref{th:kernel_char}) by
	\begin{equation}
				(\tilde{D}^x_x)_{\alpha;\beta} := \int_{\Omega^x_\dx} \partial_x \varphi_{\alpha} (x)\psi_{\beta}(x)\dd x.
	\end{equation}
Clearly $1$ belongs to the kernel of $\tilde{D}_x^x$, as 
\begin{equation}
	\int_{\Omega_\dx^x} \partial_x \varphi_\alpha (x) 1 \dd x = \left[\varphi_\alpha (x) \partial_x 1  \right]_{\partial \Omega^x_\dx}-\int_{\Omega_\dx^x}  \varphi_\alpha (x) \partial_x 1 \dd x =0
\end{equation} 
because $\varphi_\alpha \in V^{K}_{\dx,0}(\Omega_\dx^x)$ and $\partial_x 1 \equiv 0$. This corresponds to the space of all affine functions for the kernel of $D_x^x$.

\begin{align}\label{eq:matrix_kernel_Dxx}
	\footnotesize
	\newcommand*{\AddLeftone}[1]{%
		\vadjust{%
			\vbox to 0pt{%
				\vss
				\llap{$%
					{#1}\left\{
					\vphantom{
						\begin{matrix}1\end{matrix}
					}
					\right.\kern-\nulldelimiterspace
					\kern0.5em
					$}%
				\kern0pt
			}%
		}%
	}
	\newcommand*{\AddLefttwo}[1]{%
		\vadjust{%
			\vbox to 0pt{%
				\vss
				\llap{$%
					{#1}\left\{
					\vphantom{
						\begin{matrix}1\\1\end{matrix}
					}
					\right.\kern-\nulldelimiterspace
					\kern0.5em
					$}%
				\kern0pt
			}%
		}%
	}
	\newcommand*{\AddLeftthree}[1]{%
		\vadjust{%
			\vbox to 0pt{%
				\vss
				\llap{$%
					{#1}\left\{
					\vphantom{
						\begin{matrix}1\\1\\1\end{matrix}
					}
					\right.\kern-\nulldelimiterspace
					\kern0.5em
					$}%
				\kern0pt
			}%
		}%
	}
	\qquad\left[ \begin{array}{ccc|ccc|ccc}
		&& & 0       & 0      & 0      & 0     & 0       & 0               \\
		&D&                  & 0       & 0      & 0      & 0     & 0       & 0                \\
		\AddLeftthree{K}
	    &&                 & *       & *      & *      & 0     & 0       & 0            \\ \hline		
		0      & 0      & 0         & \multicolumn{3}{c|}{\multirow{3}{*}{$D$}} & 0       & 0      & 0     \\
		0      & 0      & 0       & \multicolumn{3}{c|}{}                  & 0      & 0      & 0     \\
		\AddLeftthree{K}
		0      & 0      & 0          & \multicolumn{3}{c|}{}          & *&*&*     \\   \hline
		0      & 0      & 0      & 0     & 0         & 0   & \multicolumn{3}{c}{\multirow{2}{*}{$E$}}     \\
		\AddLefttwo{K-1}
		0      & 0      & 0      & 0      & 0        & 0       & \multicolumn{3}{c}{}    
	\end{array}\right]
\end{align}
Now, looking at the structure of $\tilde{D}_x^x \in \mathbb R^{(N_x K-1)\times (N_x K)}$, we observe that the matrix $D \in \mathbb R^{K\times K}$ is proportional to the one of Lemma~\ref{lem:invertibility_mixed_der}. 
Matrix $E$ is the matrix $D$ without its last row. Since $D$ is invertible, there exists a minor of $E$ of dimension $(K-1)\times(K-1)$ that is invertible. Hence, if we consider the whole matrix without the column which is excluded by the invertible minor of $E$, then, 
we clearly see that it is invertible (the determinant is not 0 studying the relevant minors). Hence, the kernel of $D_x^x$ is of dimension 1 and it is generated by the constant function 1.
\end{proof}

\begin{proposition}[Kernel characterization of $Z_x=D^x_x - D^x M^{-1}D_x$]
	Let $Z_x=D^x_x - D^x M^{-1}D_x:\mathbb R^{N_x \times K +1} \to \mathbb R^{N_x \times K -1}$, with $M^{-1},D_x: \mathbb R^{N_x \times K +1} \to\mathbb R^{N_x \times K +1}$ be the matrices defined with test and trial functions in $V^K_{\Delta x}(\Omega_{\Delta x}^x)$ and $D^x_x, D^x:\mathbb R^{N_x \times K +1} \to \mathbb R^{N_x \times K -1}$ be defined with trial functions in $V^K_{\Delta x}(\Omega_{\Delta x}^x)$ and test in $V^K_{\Delta x,0}(\Omega_{\Delta x}^x)$. Then, the kernel of $Z$ contains $\langle 1, x \rangle$ and does not contain $w$ the vector of the kernel of $D_x$.
\end{proposition}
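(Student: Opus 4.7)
The inclusion $\langle 1, x\rangle \subseteq \ker Z_x$ reduces to direct computation. Denote by $\mathbf{1}$ the discrete representation of the constant function $1$ in $V^K_{\Delta x}$. Since $\partial_x 1 \equiv 0$, one has $D_x\mathbf{1}=0$ and $D_x^x \mathbf{1}=0$, hence $Z_x \mathbf{1}=0$. For the linear function $x$, the identity $(D_x x)_\alpha = \int \varphi_\alpha\, dx = (M\mathbf{1})_\alpha$ gives $M^{-1} D_x x = \mathbf{1}$, after which
\[
(D^x \mathbf{1})_\alpha = \int \partial_x \varphi_\alpha \, dx = [\varphi_\alpha]_{\partial \Omega^x_{\Delta x}} = 0, \qquad (D_x^x x)_\alpha = \int \partial_x \varphi_\alpha \, dx = 0,
\]
for every $\varphi_\alpha \in V^K_{\Delta x, 0}$, since such test functions vanish at the global boundary. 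Therefore $Z_x x = 0$.

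For the second assertion $w \notin \ker Z_x$, the key step is to reinterpret $\tilde z := M^{-1} D_x w$. The relation $M\tilde z = D_xw$ is equivalent to $\int \varphi\, \tilde z\, dx = \int \varphi\, \partial_x w\, dx$ for all $\varphi \in V^K_{\Delta x}$, so $\tilde z$ is precisely the $L^2$-projection of $\partial_x w$ onto $V^K_{\Delta x}$. Consequently,
\[
(Z_x w)_\alpha = \int \partial_x \varphi_\alpha \,(\partial_x w - \tilde z)\, dx \qquad \forall \varphi_\alpha \in V^K_{\Delta x, 0}.
\]
By the characterization of $w$ in Theorem~\ref{th:kernel_char}, the function $\partial_x w \in V^{K-1}_{\Delta x, b}$ has nonzero jumps at interior cell interfaces (this is exactly what distinguishes $w$ from a global polynomial), whereas $\tilde z \in V^K_{\Delta x}$ is continuous; hence $h := \partial_x w - \tilde z$ inherits those nonzero jumps. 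Cell-wise integration by parts then gives
\[
(Z_x w,\varphi) = -\sum_{i\text{ interior}} [h]_i\, \varphi(x_i) - \sum_E \int_E (\partial_x h)\, \varphi\, dx,
\]
and vanishing of this for all $\varphi \in V^K_{\Delta x, 0}$ forces (by testing first with cell-interior bubble functions to control $\partial_x h$, and then with hat-type functions localized at an interface to isolate $[h]_i$) that all jumps of $h$ vanish, contradicting the nonzero jump structure inherited from $\partial_x w$.

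As a concrete verification in the $\mathbb Q^1$ setting, substituting the Fourier symbols given in the paper yields
\[
\mathbb F_{t_x}(Z_x) = -\frac{(t_x-1)^2}{t_x h^2} + \frac{(t_x^2-1)^2}{4 t_x^2 h^2} = \frac{(t_x-1)^4}{4 t_x^2 h^2},
\]
which equals $4/h^2 \neq 0$ at the checkerboard mode $t_x = -1$, the Fourier avatar of the zigzag $w$, confirming $w \notin \ker Z_x$ in this case. The main obstacle for the general $K \ge 2$ case is a mild dimension-counting gap in the last step of the integration-by-parts argument: the interior-bubble part of $V^K_{\Delta x,0}|_E$ has dimension $K-1$, one short of $\dim \mathbb P^{K-1}(E)$, so pinning down $\partial_x h$ cell-by-cell cannot rely on cell-localized tests alone and requires additional test functions straddling adjacent cells. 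The cleanest universal route is probably to extend the Fourier-symbol calculation above, evaluating $\mathbb F_{t_x}(Z_x)$ on the specific non-trivial mode of the rectangular $D_x$ that realizes $w$ for each $K$.
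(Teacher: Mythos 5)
Your first claim ($\langle 1,x\rangle\subset\ker Z_x$) is fine and is essentially the paper's own computation. The problem is the second claim, and you have in fact diagnosed it yourself: the integration-by-parts/test-function argument does not close. After writing $(Z_xw,\varphi)=-\sum_i[h]_i\varphi(x_i)-\int\varphi\,\partial_xh$, you have per cell $K$ unknown coefficients of $\partial_xh|_E\in\mathbb P^{K-1}$ plus one jump per interior interface, i.e.\ $N_xK+N_x-1$ quantities, against only $\dim V^K_{\Delta x,0}=N_xK-1$ test conditions; no choice of bubbles and interface hats can isolate the jumps, and indeed the conclusion you are aiming for (``all jumps of $h$ vanish'') cannot follow from linear algebra alone, since the kernel of $Z_x$ is in general much larger than $\langle 1,x\rangle$. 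The Fourier computation you give only covers $K=1$ (and implicitly assumes periodicity, whereas the statement is posed with test functions in $V^K_{\Delta x,0}$ on a bounded domain), and the ``extend the symbol calculation for each $K$'' route is left as a suggestion, so the proposition is not proved for $K\ge 2$.

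The idea you are missing is the paper's: instead of testing with local basis functions, test with the solution itself. After subtracting the affine part (legitimate since $1,x\in\ker Z_x$), one works with the square restriction of $Z_x$ to $V^K_{\Delta x,0}$, which is symmetric, and uses that with Gauss--Lobatto nodes and quadrature the mass matrix is diagonal. An explicit computation of both $u^TD^x_xu=\int(\partial_xu)^2$ and $u^TD^xM^{-1}D_xu$ by the (exact) Gauss--Lobatto rule then collapses to
\begin{equation*}
u^TZ_xu=\sum_{\alpha\in\mathcal E}\frac{\Delta x\,w_\alpha}{2}\bigl(\partial_xu(x_\alpha^-)-\partial_xu(x_\alpha^+)\bigr)^2\;\ge 0,
\end{equation*}
i.e.\ the quadratic form measures exactly the squared derivative jumps at cell interfaces. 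Hence $Z_xu=0$ forces $u^TZ_xu=0$ and therefore continuity of $\partial_xu$ across every interface, while $w$ (the nontrivial kernel element of $D_x$) has, by the kernel characterization of $\tilde D_x$, a nonzero derivative jump at each interface --- contradiction. This positive-semidefiniteness argument is precisely what replaces your dimension-counting step and makes the result uniform in $K$; without it (or some equivalent global identity) your proof establishes the claim only for $\mathbb Q^1$.
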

\begin{proof}
	Clearly, $1$ and $x$ belong to the kernel of $_xZ$ as they belong to the kernel of $D^x_x$ and since $D_x 1\equiv 0$ and $$D^x M^{-1 }D_x x \equiv D^x M^{-1 } 1 \equiv D^x 1 \equiv 0.$$
	
	Let us now restrict our operator on $V_0^K$, by taking the affine operation $\tilde{u} = u-u(0)+(u(0)-u(1))x$ that uses only elements in the kernel of $Z_x$ to bring $u\in V^K_{\Delta x}(\Omega_{\Delta x}^x)$ into $\tilde{u} \in V^K_{\Delta x,0}(\Omega_{\Delta x}^x)$. We aim showing that $Z_x$ on $V^K_{\Delta x}(\Omega_{\Delta x}^x)$ is symmetric non-negative definite. The symmetry is trivially shown. We focus on the non-negativeness.
	
	Before proceeding, let us better describe $Z_x$ on $V^K_{\Delta x}(\Omega_{\Delta x}^x)$. 
	We have that 
	\begin{align}
		 (D_x)_{\alpha;\beta} u^{\beta} = \begin{cases}
			\Delta x w_\alpha\left(\partial_x u (x_{\alpha}^+) + \partial_x u (x_{\alpha}^-)\right) & \text{if }\alpha = (i,0) = (i-1,K),\\			
			\Delta x w_\alpha \partial_x u (x_{\alpha})& \text{else,}
		\end{cases}
\end{align}
with $\partial_x u \in V^{K-1}_{\dx,b}(\Omega_{\dx}^x)$, $x_{\alpha}$ being the Gauss--Lobatto composite quadrature points defined by $K+1$ points in each cell and $w_\alpha$ the quadrature weight referred to the $k$-th degree of freedom in the reference element $[0,1]$ with $w_{(i,0)}=w_{(i-1,K)}$.
Then, if we study the bilinear form, we have that
\begin{align}
	u^T(D^x M^{-1}D_x) u =&
	\sum_{\alpha,\beta} \int \partial_x u\, \varphi_\alpha\, dx\, \frac{\delta_{\alpha,\beta}}{M_{\alpha,\alpha}}  \int \varphi_{\beta} \,\partial_x u\, dx =  \sum_\alpha \frac{1}{M_{\alpha,\alpha}} \left(\int \varphi_\alpha \partial_x u\right)^2=\\
	&\sum_{\alpha \in \mathcal{I}} \frac{\Delta x ^2 w_\alpha^2}{\Delta x w_\alpha}  (\partial_x u(x_\alpha))^2 + \sum_{\alpha \in \mathcal{E}} \frac{\Delta x ^2 w_\alpha^2}{2\Delta x w_\alpha}  (\partial_x u(x_\alpha^-)+\partial_x u(x_\alpha^+))^2=\\
	&\sum_{\alpha \in \mathcal{I}} {\Delta x w_\alpha} (\partial_x u(x_\alpha))^2 + \sum_{\alpha \in \mathcal{E}} \frac{\Delta x w_\alpha}{2}  (\partial_x u(x_\alpha^-)+\partial_x u(x_\alpha^+))^2,
\end{align}
where we have introduced the set of internal degrees of freedom $\mathcal {I}=\lbrace \alpha : \varphi_\alpha \in V^{K}_{\dx,0}(\Omega_{\dx}^x), \, \alpha = (i,k) \text { with }k \in [1,K-1]\rbrace$ and edges degrees of freedom  $\mathcal {E}=\lbrace \alpha : \varphi_\alpha \in  V^{K}_{\dx,0}(\Omega_{\dx}^x), \, \alpha = (i,0) \rbrace.$

Now, using this definition, we will show that the restriction of $Z_x$ to $ V^{K}_{\dx,0}(\Omega_{\dx}^x)$ is symmetric non-negative definite. Take $u\in  V^{K}_{\dx,0}(\Omega_{\dx}^x) \equiv\mathbb R^{N_x K -1} \subset  V^{K}_{\dx}(\Omega_{\dx}^x) \equiv \mathbb R^{N_xK+1}$, using the previous computations and the definition of $D_x^x$, we compute
\begin{align}
	u^T Z_x u=& u^T D^x_x u - u^T D^x M^{-1} D_x u = \int (\partial_x u)^2 -  u^T D^x M^{-1} D_x u =\\
	\begin{split}
		&\sum_{\alpha \in \mathcal{I}} {\Delta x w_\alpha} (\partial_x u(x_\alpha))^2 +  \sum_{\alpha \in \mathcal{E}} \Delta x w_\alpha  (\partial_x u(x_\alpha^-)^2+\partial_x u(x_\alpha^+)^2)\\
		-&\sum_{\alpha \in \mathcal{I}} {\Delta x w_\alpha} (\partial_x u(x_\alpha))^2 -\sum_{\alpha \in \mathcal{E}} \frac{\Delta x w_\alpha}{2}  (\partial_x u(x_\alpha^-)+\partial_x 	u(x_\alpha^+))^2=
	\end{split}\\
	&\sum_{\alpha \in \mathcal{E}} \frac{\Delta x w_\alpha}{2}  (\partial_x u(x_\alpha^-)-\partial_x u(x_\alpha^+))^2\geq 0.\label{eq:continuous_der_inte}
\end{align} 
We have just shown that $Z_x$ is non-negative definite. 
So, the element in the kernel of $Z_x$, i.e., $Z_xu=0$, must also be such that $u^TZ_xu=0$ and hence, they must have continuous derivative at the interfaces \eqref{eq:continuous_der_inte}. This was not the case for $w$ the element generating with $1$ the kernel of $D_x$.
\end{proof}
Unfortunately, we cannot say more about the matrix $Z_x$ and, experimentally, we have noticed that the kernel is indeed much larger than just these vectors. In particular, the dimension of the kernel increases with the order of the method and the number of cells.

\bibliographystyle{unsrt}
\bibliography{biblio}

%
%

\end{document}